\newtheorem{theorem}{Theorem}[section]
\newtheorem{lemma}[theorem]{Lemma}
\newtheorem{proposition}[theorem]{Proposition}
\theoremstyle{definition}
\newtheorem{definition}[theorem]{Definition}
\newtheorem{example}[theorem]{Example}
\newtheorem{algorithm}[theorem]{Algorithm}
\begin{document}


\title{Thurston's algorithm and rational maps from quadratic polynomial matings} \author{Mary Wilkerson}


\begin{abstract}

Topological mating is an combination that takes two same-degree polynomials and produces a new map with dynamics inherited from this initial pair. This process frequently yields a map that is Thurston-equivalent to a rational map $F$ on the Riemann sphere. Given a pair of polynomials of the form $z^2+c$ that are postcritically finite, there is a fast test on the constant parameters to determine whether this map $F$ exists---but this test is not constructive. We present an iterative method that utilizes finite subdivision rules and Thurston's algorithm to approximate this rational map, $F$. This manuscript expands upon results given by the Medusa algorithm in \cite{MEDUSA}. We provide a proof of the algorithm's efficacy, details on its implementation, the settings in which it is most successful, and examples generated with the algorithm.
\end{abstract}

\maketitle
\tableofcontents 
\addtocontents{toc}{\vskip-40pt} 

\let\thefootnote\relax\footnote{2010 \emph{Mathematics Subject ClassiÞcation}. Primary 37F20; Secondary 37F10.}
\let\thefootnote\relax\footnote{\emph{Key words and phrases.} mating, finite subdivision rule, rational maps, Thurston's algorithm, Medusa.}


\vspace{-40pt}

\section{Introduction}

\emph{Mating} refers to a collection of operations that combine a pair of two same-degree polynomials in order to form a new map. Depending on the type of mating and polynomial pair, the resulting mating may be Thurston-equivalent to a rational map. When topologically mating two postcritically finite polynomials of the form $z\mapsto z^2+c$, the $c$ parameters determine if the resulting mating behaves like a rational map on the 2-sphere---but this test is not constructive of the rational map itself. \cite{LEI} \cite{REES} \cite{SHISHIKURA}

Thurston's topological characterization of rational maps, which is the driving force behind this parameter test, gives a more general criteria for when a topological map $g:\mathbb{S}^2\rightarrow\mathbb{S}^2$ is Thurston-equivalent to a rational map $F:\hat{\mathbb{C}}\rightarrow\hat{\mathbb{C}}$. The proof of Thurston's characterization given in \cite{TOPCHARACTERIZATION} suggests an algorithm for obtaining $F$: if we take Thurston pullbacks of a complex structure on $\mathbb{S}^2$ by $g$, this process incidentally outputs a sequence of rational maps converging to $F$. To take these pullbacks however, we must have some topological understanding of how the branched cover $g$ acts on various subsets of $\mathbb{S}^2$. Since this is sometimes difficult information to encode, few direct attempts (such as those in \cite{MEDUSA, SPIDER}) have been made at using Thurston's algorithm to find $F$. 

If we wish to find the rational map associated with a mating using Thurston's algorithm, it is clear that we need to understand the topological structure of the mated map first. In \cite{FSRCONSTRUCTION}, finite subdivision rules are constructed to develop this understanding. In the situations where a finite subdivision rule exists, we then can apply Thurston's algorithm to obtain an approximation to our desired map. The content of this article elaborates on this argument.

Put succinctly, we will develop a combinatorial map to model the behavior of the essential mating, and use this knowledge to assist in finding rational map approximations to the geometric mating. We start with prerequisite topics in Section \ref{prereqs}.  In this section, we will discuss quadratic polynomials, their matings, and the Thurston and Medusa algorithms. In Section \ref{yourfsr}, we develop a finite subdivision rule construction that is tailored to the goal of describing mapping behavior of certain matings.

In Section \ref{algorithms}, we introduce our main results: a method for obtaining rational maps from postcritically finite quadratic matings using finite subdivision rules and Thurston's algorithm. We prove that the output of our iterative algorithm  determines an approximation to the desired rational map, and highlight situations in which our algorithm extends the reach of a similar technique called the \emph{Medusa algorithm} \cite{MEDUSA}. We comment on examples and further avenues of exploration in Section \ref{connections}.

\section{Prerequisites in dynamics}\label{prereqs}

\subsection{Thurston equivalence}\label{thureq}

We have thus far discussed a notion of maps behaving in a dynamically similar fashion. We will formalize this with the definition below:

\begin{definition} Let $f,g: \mathbb{S}^2\rightarrow \mathbb{S}^2$ be two branched mappings with postcritical sets $P_f$ and $P_g$. We have that $f$ and $g$ are said to be \emph{Thurston equivalent} if and only if there exist homeomorphisms $h, h': (\mathbb{S}^2,P_f)\rightarrow(\mathbb{S}^2,P_g)$ such that

\begin{center}

$\begin{CD}
(\mathbb{S}^2,P_f)		@>h'>> 		(\mathbb{S}^2,P_g)\\
@VVfV			 						@VVgV\\
(\mathbb{S}^2,P_f)		@>h>> 		(\mathbb{S}^2,P_g)\\
\end{CD}
$
\end{center}

commutes, and such that $h$ is isotopic to $h'$ relative to $P_f$.\cite{TOPCHARACTERIZATION}

\end{definition}

When $f$ and $g$ are Thurston equivalent, this implies that the action of $f$ on a sphere containing its postcritical set is similar to the action of $g$ on a sphere containing its postcritical set.


\subsection{Parameter space}\label{parameter} 

Let $M$ denote the Mandelbrot set. The work in this paper will emphasize those parameters $c\in M$ for which the polynomials $f_c(z)=z^2+c$ are critically preperiodic. For such polynomials we have that $c\in M$, and that the associated Julia set $J_c$ is a connected and locally connected dendrite. When $J_c$ is a dendrite, the Julia set has no interior and we have that $J_c$ is equivalent to the filled Julia set, $K_c$.

When $K_c$ is connected, its complement on the Riemann sphere $\hat{\mathbb{C}}\backslash K_c$ is conformally isomorphic to $\hat{\mathbb{C}}\backslash\overline{\mathbb{D}}$ via some map $\phi:\hat{\mathbb{C}}\backslash \overline{\mathbb{D}}\rightarrow\hat{\mathbb{C}}\backslash K_c$. We have that $\phi$ is uniquely determined under the additional constraint that it conjugate $f_0:\hat{\mathbb{C}}\backslash\overline{\mathbb{D}}\rightarrow\hat{\mathbb{C}}\backslash\overline{\mathbb{D}}$ to $f_c:\hat{\mathbb{C}}\backslash K_c\rightarrow \hat{\mathbb{C}}\backslash K_c$ so that $\phi \circ f_0 = f_c\circ \phi$. 

\begin{figure}[hbt]
\center{\includegraphics[height=2in]{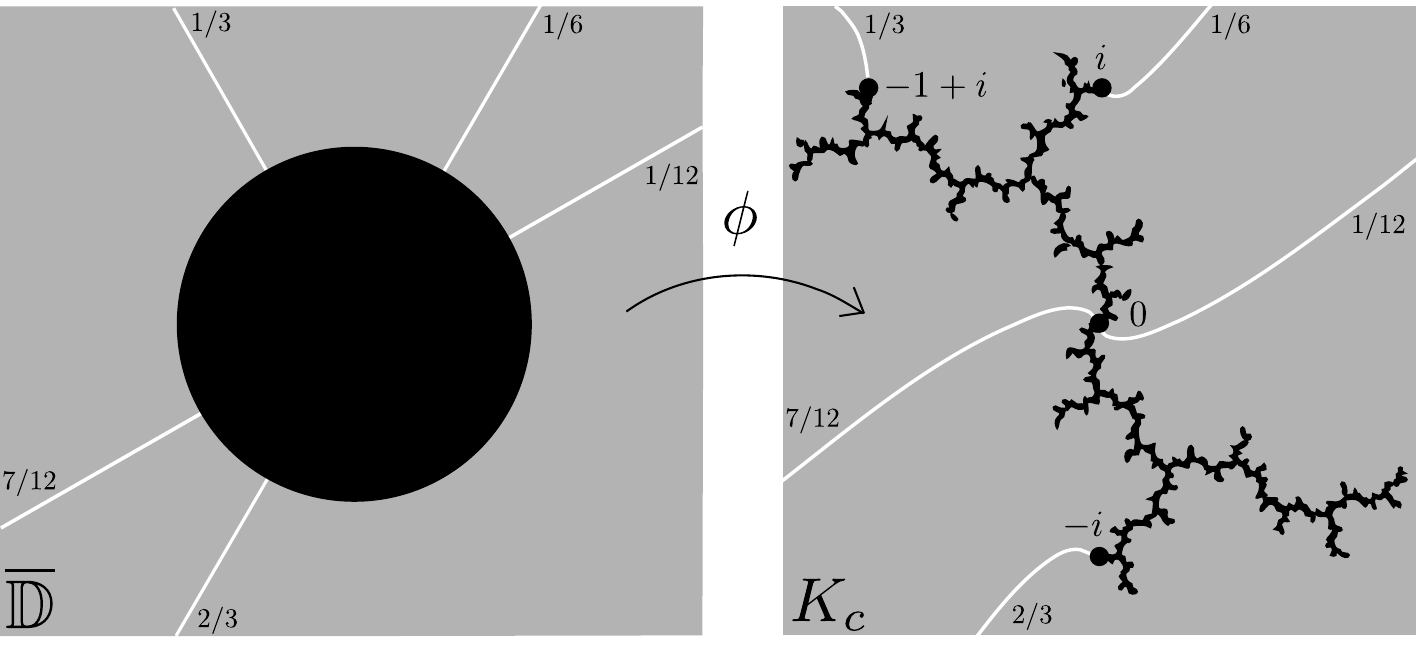}}
\caption{The conformal isomorphism $\phi$ which determines external rays for $z\mapsto z^2+i$. Shown on the right are external rays landing at points on the critical orbit of this polynomial.}
\label{varphi}
\end{figure}

We may use $\phi$ to define \emph{external rays of angle $t$}, $R_c(t)$ by fixing $t\in \mathbb{R}/\mathbb{Z}$ and taking images of rays around the unit disk under $\phi$,  $R_c(t)=\{\phi(re^{2\pi i t})|r\in(1,\infty)$, as demonstrated in Figure \ref{varphi}. Since the $K_c$ we discuss here are locally connected , we may take that $\phi$ extends continuously to $\partial\mathbb{D}$ and that external rays of angle $t$ have \emph{landing point} given by $\gamma(t)=\displaystyle\lim_{r\rightarrow 1^+}\phi(re^{2\pi i t})$. The map $\gamma$ is called the \emph{Carath\'{e}odory semiconjugacy}, which in the degree two case highlights the angle-doubling behavior of the map $f_c$ when applied to landing points of external rays: $\gamma(2t)=f_c(\gamma(t))$. This is emphasized on the right of Figure \ref{varphi} for the polynomial $z\mapsto z^2+i$: we may note the critical orbit portrait $$0\mapsto i\mapsto -1+i \mapsto -i$$ for this map, or we may double the angles of external rays and record the locations of landing points in order to observe the same behavior.

Given $\phi$, a typical notational convention is to parameterize critically preperiodic polynomials $z\mapsto z^2+c$ by an angle $\theta$ of an external ray landing at the critical value rather than by $c$. In the event that more than one ray lands at the critical value, there may be multiple parameters referring to the same polynomial. As a simpler example, the polynomial given in Figure \ref{varphi} could be named $f_{1/6}$ rather than $f_i$. We will adopt this $f_\theta$  convention in lieu of the use of $f_c$ for the remainder of the paper.

\subsection{Matings}\label{mating}  

Let $\widetilde{\mathbb{C}}$ be the compactification of $\mathbb{C}$ formed by union with the circle at infinity, $\widetilde{\mathbb{C}}=\mathbb{C}\cup\{\infty\cdot e^{2\pi i \theta}|\theta \in \mathbb{R}/\mathbb{Z}\}$. Then, take two monic same-degree polynomials with locally connected and connected filled Julia sets acting on two disjoint copies of $\widetilde{\mathbb{C}}$. If we form an equivalence relation on these copies of $\widetilde{\mathbb{C}}$ appropriately, the polynomial pair will determine a map that descends to the quotient space. This map is the \emph{mating} of the two polynomials. There are many kinds of polynomial matings, each dependent upon the equivalence relation we select. We will discuss three fundamental constructions: \emph{formal} matings, \emph{topological} matings, and \emph{essential} matings.

\begin{definition}Let $f_\alpha:\widetilde{\mathbb{C}}_\alpha\rightarrow\widetilde{\mathbb{C}}_\alpha$ and $f_\beta:\widetilde{\mathbb{C}}_\beta\rightarrow\widetilde{\mathbb{C}}_\beta$ be postcritically finite monic quadratic polynomials taken on two disjoint copies of $\widetilde{\mathbb{C}}$, and let $\sim_f$ be the equivalence relation which identifies $\infty \cdot e^{2\pi i t}$ on $\widetilde{\mathbb{C}}_\alpha$ with $\infty \cdot e^{-2\pi i t}$ on $\widetilde{\mathbb{C}}_\beta$ for all $t\in \mathbb{Z}/\mathbb{Z}$. Then, the quotient space $\widetilde{\mathbb{C}}_\alpha\bigsqcup \widetilde{\mathbb{C}}_\beta/\sim_f$ may be identified with $\mathbb{S}^2$, as this quotient glues two $\tilde{\mathbb{C}}$ disks together along their boundaries with opposing angle identifications to form a topological 2-sphere. (See Figure \ref{green}.) This quotient space serves as the domain of the \emph{formal mating} $f_\alpha\upmodels_ff_\beta$, that is the map that applies $f_\alpha$ and $f_\beta$ on their respective hemispheres of $\mathbb{S}^2$. \end{definition}

It should be noted that the Carath\'{e}odory semiconjugacy guarantees that $f_\alpha\upmodels_ff_\beta$ is well-defined on the equator, and provides a continuous branched covering of $\mathbb{S}^2$ to itself. 

\begin{figure}[h]
\center{\includegraphics[height=2in]{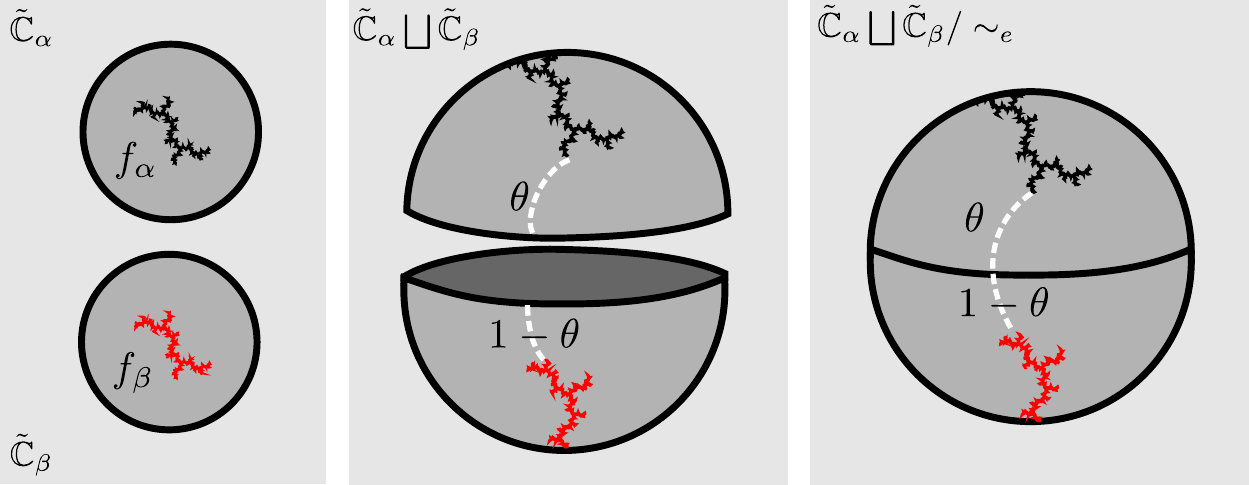}}
\caption{Steps in the formation of the formal mating.}
\label{green}
\end{figure}

\begin{definition}The domain of the \emph{topological mating} $f_\alpha\upmodels f_\beta$ is given by the quotient space $K_\alpha\bigsqcup K_\beta/\sim$, where $\sim$ identifies the landing point of $R_\alpha(t)$ on $J_\alpha$ with the landing point of $R_\beta(-t)$ on $J_\beta$. This quotient glues the filled Julia sets of $f_\alpha$ and $f_\beta$ together by their boundaries using opposing external angle identifications. Much like the formal mating, we obtain the map $f_\alpha\upmodels f_\beta$ by applying $f_\alpha$ and $f_\beta$ on their respective filled Julia sets. \end{definition}

The Carath\'{e}odory  semiconjugacy again guarantees that the topological mating is well-defined and continuous, but it is possible that it no longer acts on a quotient space that is a topological 2-sphere even though there is an induced map. When the domain is a 2-sphere is a solved problem for the postcritically finite quadratic case, noted in the following theorem:

\begin{theorem}[Lei, Rees, Shishikura]\label{LRS} The topological mating of the postcritically finite maps $z\mapsto z^2+c$ and $z\mapsto z^2+c'$ is Thurston-equivalent to a rational map on $\hat{\mathbb{C}}$ if and only if $c$ and $c'$ do not lie in complex conjugate limbs of the Mandelbrot set \cite{LEI}, \cite{REES}, \cite{SHISHIKURA}. 

\end{theorem}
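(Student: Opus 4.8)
The plan is to prove Theorem~\ref{LRS} by following the strategy of Tan Lei and Shishikura, which reduces the question to Thurston's topological characterization of rational maps, and then to a careful analysis of when the formal mating $f_\alpha\upmodels_f f_\beta$ carries a Thurston obstruction. First I would observe that the topological mating $f_\alpha\upmodels f_\beta$, when its quotient is a $2$-sphere, is the ``collapsed'' version of the formal mating: there is a semiconjugacy from the formal mating on $\mathbb{S}^2$ to the topological mating obtained by collapsing the ray-equivalence classes (the equator together with rays landing at common points) to points. A theorem of Moore guarantees that collapsing this (upper semicontinuous, cellular) decomposition of $\mathbb{S}^2$ yields a space homeomorphic to $\mathbb{S}^2$ precisely when no decomposition element separates the sphere, and in that case the formal mating and the topological mating are Thurston-equivalent as branched covers. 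So the problem becomes: for which parameter pairs is the formal mating Thurston-equivalent to a rational map?

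The next step is to invoke Thurston's characterization: a postcritically finite branched cover $g$ of $\mathbb{S}^2$ with hyperbolic orbifold is Thurston-equivalent to a rational map if and only if $g$ has no Thurston obstruction, i.e. no $g$-stable multicurve $\Gamma$ whose associated transition matrix $M_\Gamma$ has leading eigenvalue $\lambda(\Gamma)\geq 1$. One checks that for the polynomials $z^2+c$ in question the orbifold is hyperbolic (the postcritical sets have at least four points once we include $\infty$ and its glued partner, or the maps are conjugate to well-understood exceptional cases), so the characterization applies. The heart of the argument is then to show: a Thurston obstruction for $f_\alpha\upmodels_f f_\beta$ exists if and only if $c$ and $c'$ lie in conjugate limbs of $M$. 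For the ``if'' direction, if $c$ and $c'$ lie in conjugate limbs, then there is a repelling periodic point in each Julia set cut out by the same rational angles (up to conjugation), and the external rays landing there — together with their mirror images on the other hemisphere — assemble, after the gluing, into a closed curve (or a finite collection of them) on $\mathbb{S}^2$ that is essential, non-peripheral, and $f$-stable, and whose transition matrix has leading eigenvalue exactly $1$; this is the ``L\'evy-type'' obstruction (often a Levy cycle) forced by the combinatorics of conjugate limbs. For the ``only if'' direction one argues that any irreducible Thurston obstruction must be built from arcs lying in the two filled Julia sets joined along ray-equivalence classes, and a combinatorial analysis of how angle-doubling acts on the relevant angles shows such a stable multicurve can exist only when the two parameters sit in conjugate limbs; here one uses that the only obstructions available are essentially Levy cycles, by the degree-two analogue of the Berstein--Levy / Tan Lei arguments ruling out Thurston obstructions with $\lambda>1$ and classifying those with $\lambda=1$.

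To organize the write-up I would: (1) recall the semiconjugacy between formal and topological matings and the Moore-theorem criterion, citing \cite{LEI}, \cite{SHISHIKURA}; (2) state Thurston's characterization and verify the hyperbolic-orbifold hypothesis; (3) prove the ``if'' direction by explicitly constructing the obstructing multicurve from external rays landing at the periodic points associated to conjugate limbs, and computing its transition matrix; (4) prove the ``only if'' direction by showing any Thurston obstruction forces the conjugate-limb condition, via the classification of degree-two obstructions as Levy cycles and a combinatorial angle computation. The main obstacle I expect is step~(4): ruling out all possible stable multicurves in general, rather than just exhibiting the expected one. This requires a genuinely global combinatorial argument about the dynamics of angle doubling on the set of rays involved in the gluing, and it is exactly the place where the original proofs of Tan Lei, Rees, and Shishikura do the real work — controlling the interplay between the two polynomials' Hubbard trees / spider combinatorics and showing no ``unexpected'' obstruction appears. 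Accordingly, in this manuscript I would most likely cite that portion from \cite{LEI}, \cite{REES}, \cite{SHISHIKURA} rather than reproduce it, and focus the self-contained part on steps (1)--(3), which suffice for the algorithmic applications developed in the later sections.
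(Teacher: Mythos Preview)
The paper does not prove Theorem~\ref{LRS}; it is stated as a known result with citations to \cite{LEI}, \cite{REES}, \cite{SHISHIKURA}, and is used thereafter as a black box. So there is no proof in the paper to compare your proposal against.

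That said, your outline has a genuine gap in step~(1). You claim that once Moore's theorem gives a sphere quotient, ``the formal mating and the topological mating are Thurston-equivalent as branched covers.'' This is false in general, and the present paper makes exactly this point: when distinct postcritical points of the formal mating lie in a common ray-equivalence class, the formal mating has strictly more postcritical points than the topological mating and typically carries a degenerate Levy cycle built from those ray classes. The two maps are then \emph{not} Thurston-equivalent. The correct bridge is the \emph{essential} mating of Definition~\ref{essential}, which collapses only the finitely many ray-equivalence classes meeting the critical orbit; Tan Lei shows this map is always Thurston-equivalent to the topological mating. Her argument then proves that when $c,c'$ are not in conjugate limbs, either the formal mating is already unobstructed, or its only obstruction is one of these removable Levy cycles, and the essential mating is unobstructed. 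Without inserting the essential mating (or an equivalent ``collapse the removable obstruction'' step), your step~(4) cannot succeed as stated, because you would be trying to show the formal mating is unobstructed in cases where it is not.
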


In the event that $c$ and $c'$ are not in complex conjugate limbs of $M$, then the rational map referenced in the theorem above is called a \emph{geometric mating}. We will use $F$ to denote the geometric mating of a polynomial pair whenever it is unambiguous to do so.

Theorem \ref{LRS} is a powerful result as it allows us to make statements regarding the dynamics of the topological mating based on parameters alone. However, while this theorem tells us when a topological mating behaves like a rational map $F$ on the Riemann sphere, it is not constructive of this map. We will detail an approximation for $F$ later, but the algorithm will depend on having an understanding of the action of the topological mating on the sphere. Ideally we would use the formal mating instead, since it is much simpler in construction than the topological mating---but these two maps are not always Thurston equivalent. Instead, we shall make use of an intermediate mating operation called the \emph{essential mating}, $f_\alpha\upmodels_ef_\beta$. Starting with the quotient 2-sphere $\mathbb{S}^2$ developed in the formal mating, the essential mating is constructed as detailed below and in \cite{LEI}.

\begin{definition}\label{essential} Suppose $f_\alpha$ and $f_\beta$ are two polynomials with the properties described in Theorem \ref{LRS} whose topological mating is Thurston-equivalent to a rational map. Allow $\mathbb{S}^2$ to denote the domain of the formal mating $h=f_\alpha\upmodels_f f_\beta$.  We define the \emph{essential mating} using the following steps. 
\begin{enumerate}
\item Let $\{l_1,...,l_n\}$ be the set of maximal connected graphs of external rays on $\mathbb{S}^2$ containing at least two points of the postcritical set $P_h$, and let $\{q_1,...,q_m\}$ be the set of connected graphs of external rays in $\displaystyle\bigcup_{k=1}^\infty\bigcup_{i=1}^nh^{-k}(l_i)$ containing at least one point on the critical orbit of $h$. Take each of the $\{q_1,...,q_m\}$ to be an equivalence class of $\sim_e$, and note that $\mathbb{S}'^2 =\mathbb{S}^2/\sim_e$ is homeomorphic to a sphere since none of the equivalence classes of $\sim_e$ contain closed curves.  

\item Note that $h$ maps equivalence classes to equivalence classes, so letting $\pi:\mathbb{S}^2\rightarrow\mathbb{S}'^2$ denote the natural projection yields that $\pi\circ h\circ\pi^{-1}$ is well-defined and preserves the mapping order of equivalence classes.

\item Set $V_j$ to be an open neighborhood of $q_j$ such that $V_j\cap(P_h\cup\Omega_h)=q_j\cap(P_h\cup\Omega_h)$ for each $j$, and such that distinct $V_j$ are nonintersecting. For each $j$, denote by $\{U_{ij}\}$ the set of connected components of $h^{-1}(V_j)$ for which $U_{ij}\cap\displaystyle\bigcup_{p=1}^mq_p=\emptyset$. 

\item Define $f_\alpha\upmodels_ef_\beta:\mathbb{S}'^2\rightarrow\mathbb{S}'^2$ as follows. On the complement of $\displaystyle\bigcup_{i,j}\pi(U_{ij})$, we set $f_\alpha\upmodels_ef_\beta:=\pi\circ h\circ \pi^{-1}$. For each $i,j$ we define $f_\alpha\upmodels_ef_\beta:\pi(U_{ij})\rightarrow\pi(V_j)$ as some homeomorphism that extends continuously to the boundary of $\pi(U_{ij})$.  
\end{enumerate}

The map $f_\alpha\upmodels_ef_\beta$ is the \emph{essential mating} of $f_\alpha$ and $f_\beta$.

\end{definition} 

To unpack the definition, the action of the essential mating on the 2-sphere is similar to that of the formal mating, save for two changes.  First, postcritical points that fall into shared equivalence classes under $\sim_t$ are collapsed. Second, after collapsing along these `essential' equivalence classes, we modify the map slightly so that it does not map arcs to points and thus remains a branched covering. In making these changes, the essential mating retains much of the simplicity of the structure of the formal mating, but also serves as a map that is guaranteed to be Thurston equivalent to the topological mating--regardless of how arbitrary the selected homeomorphisms appear on the last step of the definition. \cite{LEI}

A notable implementation of this construction occurs when the postcritical points of a polynomial pairing fall into distinct equivalence classes of $\sim_t$. In this case, none of the postcritical points for the formal mating can be connected by a graph of external rays on $\mathbb{S}^2$, and so $\sim_e$ is trivial. Then, $\pi$ acts much like the identity and there are no $U_{ij}$, so we have that $f_\alpha\upmodels_ef_\beta=\pi\circ h\circ\pi$ on all of $\mathbb{S}'^2$. More simply, the essential and formal matings are the same map whenever polynomial postcritical points are not identified under $\sim_t$. If the essential and formal matings are the same for a pair of polynomials, we say that those polynomials are \emph{strongly mateable}.

To simplify notation from this point on, we will use $g$ to refer to an essential mating if it is clear to do so. As $\mathbb{S}'^2$ is homeomorphic to $\mathbb{S}^2$, we will further simplify notation by treating $g$ as a self-map on $\mathbb{S}^2$.

 \subsection{The Thurston and Medusa algorithms}\label{thurstonmedusa}

 Let $\mathcal{C}$ denote the space of orientation preserving complex structures on $(\mathbb{S}^2,P_f)$. We then define the \emph{Teichmuller space}, $\mathcal{T}_f$, to be the quotient of $\mathcal{C}$ by the group of orientation preserving diffeomorphisms of $(\mathbb{S}^2,P_f)$ that are isotopic to the identity. More specifically, we take two complex structures $\sigma_1, \sigma_2\in\mathcal{C}$ to be representatives of the same element $\tau\in\mathcal{T}_f$ if $\sigma_1=\sigma_2\circ g$ where $g$ is some orientation preserving homeomorphism on $\mathbb{S}^2$ that is isotopic to the identity relative to $P_f$.

Let $\sigma\in\mathcal{C}$.  The pullback of $\sigma$ under $f$ gives a complex structure on $\mathbb{S}^2$, and so the mapping $\sigma\mapsto \sigma(f)$ induces a holomorphic mapping $\Sigma_f:\mathcal{T}_f\rightarrow\mathcal{T}_f$ on Teichmuller space. We call $\Sigma_f$ the \emph{Thurston pullback map}.

We then have the following:

\begin{proposition}[Thurston, Douady, Hubbard] The mapping $f$ is Thurston-equivalent to a rational function if and only if $\Sigma_f$ has a fixed point. \cite{TOPCHARACTERIZATION}
\end{proposition}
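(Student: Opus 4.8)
The plan is to prove both directions of the equivalence. The easy direction is that if $f$ is Thurston-equivalent to a rational map $F$, then $\Sigma_f$ has a fixed point. Here I would let $\sigma_0$ denote the standard complex structure on $\hat{\mathbb{C}}$, transported to $(\mathbb{S}^2, P_f)$ via the homeomorphism $h$ supplied by Thurston equivalence. Since $F$ is holomorphic, $F^*\sigma_0 = \sigma_0$ on the nose, and the commuting square $h\circ f \simeq F\circ h'$ together with $h\simeq h'$ rel $P_f$ shows that the class $\tau_0 = [\sigma_0] \in \mathcal{T}_f$ satisfies $\Sigma_f(\tau_0) = \tau_0$. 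The one subtlety to check carefully is that pulling back by $h'$ versus $h$ lands in the same Teichm\"uller class, which is exactly where the isotopy $h\simeq h'$ rel $P_f$ enters; this is a short argument using the definition of $\mathcal{T}_f$ given above.

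The substantive direction is the converse: if $\Sigma_f$ has a fixed point $\tau\in\mathcal{T}_f$, then $f$ is Thurston-equivalent to a rational map. First I would pick a representative complex structure $\sigma\in\mathcal{C}$ of $\tau$. By the measurable Riemann mapping theorem there is a quasiconformal homeomorphism $\varphi:(\mathbb{S}^2,\sigma)\to(\hat{\mathbb{C}},\sigma_0)$; set $\sigma' = f^*\sigma$, which again by the uniformization/MRMT gives $\varphi':(\mathbb{S}^2,\sigma')\to(\hat{\mathbb{C}},\sigma_0)$. Because $\sigma$ pulls back to $\sigma'$ under $f$, the composition $F := \varphi\circ f\circ (\varphi')^{-1}:\hat{\mathbb{C}}\to\hat{\mathbb{C}}$ is holomorphic (it is a branched cover that is conformal with respect to $\sigma_0$ on both sides), hence rational. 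The remaining task is to promote the equality $[\sigma']=\Sigma_f(\tau)=\tau=[\sigma]$ in $\mathcal{T}_f$ into an honest isotopy between $\varphi$ and $\varphi'$ rel $P_f$: the Teichm\"uller equivalence gives a diffeomorphism isotopic to the identity intertwining $\sigma$ and $\sigma'$, and composing with $\varphi,\varphi'$ exhibits $h=\varphi$ and $h'=\varphi'$ (after adjusting by this isotopy) as the homeomorphisms filling in the Thurston-equivalence diagram with $g$ replaced by $F$.

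I expect the main obstacle to be the bookkeeping in this last step — tracking how the pullback operation on complex structures descends to a well-defined holomorphic self-map $\Sigma_f$ of $\mathcal{T}_f$ (rather than just of $\mathcal{C}$), and checking that a fixed point of the descended map, not merely of the map on $\mathcal{C}$, still produces the needed isotopy rel $P_f$. Concretely, one must verify that if $\sigma$ and $f^*\sigma$ differ by a diffeomorphism isotopic to the identity rel $P_f$, then the induced homeomorphism conjugating $\varphi f\varphi'^{-1}$ can be absorbed into a genuine isotopy; this is where the definition of $\mathcal{T}_f$ as the quotient by the identity-isotopic diffeomorphism group is used in an essential way. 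Since the full details are classical (due to Thurston, with the analytic framework by Douady and Hubbard), for this expository manuscript I would present the argument in the form above and refer to \cite{TOPCHARACTERIZATION} for the verification that $\Sigma_f$ is well-defined and holomorphic on $\mathcal{T}_f$.
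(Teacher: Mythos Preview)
Your sketch is correct and follows the classical Douady--Hubbard argument: transport the standard structure through the equivalence to exhibit a fixed point in one direction, and in the other uniformize a representative of the fixed class and its $f$-pullback to produce a rational $F=\varphi\circ f\circ(\varphi')^{-1}$, with the Teichm\"uller equivalence supplying the isotopy rel $P_f$ between $\varphi$ and $\varphi'$. You have also correctly identified the only genuinely delicate point, namely that the fixed point lives in $\mathcal{T}_f$ rather than in $\mathcal{C}$, so one must absorb a diffeomorphism isotopic to the identity rel $P_f$.

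The paper, however, does not prove this proposition at all: it is stated as a cited result of Thurston, Douady, and Hubbard, and the surrounding paragraph only explains how the proposition slots into the larger proof of Thurston's characterization (via the contraction of $\Sigma_f^{\circ 2}$ in the hyperbolic-orbifold case), referring the reader to \cite{TOPCHARACTERIZATION} for details. So there is nothing in the paper to compare your argument against; what you have written is precisely the proof one would find in the cited reference, and it goes beyond what the manuscript itself supplies.
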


This proposition is a necessary step en route to proving part of Thurston's topological characterization of rational maps. Thurston's theorem specifies that a critically finite branched map with hyperbolic orbifold is Thurston-equivalent to a rational function if and only if for any $f$-stable multi-curve $\Gamma$, the largest eigenvalue of the associated Thurston linear transformation is $<1$. Given this property, $\Sigma_f$ is holomorphic and thus distance non-increasing, while $\Sigma_f\ ^{\circ 2}$ is strictly contracting. If $\tau\in\mathcal{T}_f$, the sequence $\tau_n=\Sigma_f\ ^{\circ n}(\tau)$ then converges to the fixed point of $\Sigma_f$ in Teichmuller space, and so $f$ is Thurston-equivalent to a rational map. The interested reader may refer to \cite{TOPCHARACTERIZATION} for a full statement of the topological characterization of rational maps, and details on the proof referenced here.


The  process of constructing the sequence $\tau_n$ to find the rational map $F$ which is Thurston-equivalent to $f$ is referred to as \emph{Thurston's algorithm}. Since the $\tau_n$ are equivalence classes of complex structures, it is typical to work with representatives $\sigma_n\in\mathcal{C}$ that have been normalized in some way. We then obtain a sequence of rational maps $F_n=\sigma_n\circ f\circ\sigma_{n+1}^{-1}$ as in Figure \ref{commutative1} that converge to the desired $F$.

In theory, Thurston's algorithm is very straightforward. In practice, normalizing our complex structures and labeling critically finite branched maps in a manner stringent enough to obtain the pullback sequence $\{\sigma_n\}$ is difficult. A crux of the problem is that when $f$ is a degree $m$ branched map, all points except the critical values have $m$ preimages under $f$---and constructing a pullback relies on understanding the action of $f$ on $\mathbb{S}^2$ well enough to distinguish between these preimage points. Thus, unless we understand the mapping behavior of $f$ we cannot build a meaningful pullback map. This problem is not insurmountable however, as Thurston's algorithm has been successfully tailored to specific kinds of branched maps: the Spider algorithm is one such adaptation for polynomials \cite{SPIDER}.

Since matings are of particular interest to us, we'll give a brief overview of another adaptation of Thurston's algorithm for matings: the Medusa algorithm \cite{MEDUSA}. This algorithm iteratively approximates the rational map that is Thurston-equivalent to the topological mating of two quadratic polynomials. The commutative diagram in Figure \ref{commutative1} emphasizes the relationship of the maps involved. The Medusa algorithm starts by encoding the mapping structure of the associated formal mating $f:\mathbb{S}^2\rightarrow\mathbb{S}^2$ using the \emph{Medusa}: a 1-skeleton structure embeddable in $\mathbb{S}^2$ that contains the equator and all external rays which meet $P_f$. The Caratheodory angle-doubling semiconjugacy specifies the expected action of $f$ on key regions of $\mathbb{S}^2$: notably, the equator and `legs' that form the Medusa. 

\begin{figure}[htb]
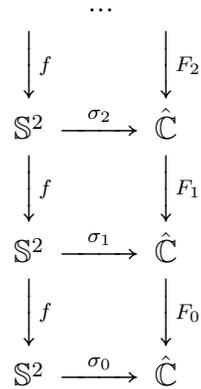

\center{$. . .$

\vspace{.1in}

$\begin{CD}
@VVfV			 											@VVF_2V\\
\mathbb{S}^2		@>\sigma_2>> 		\hat{\mathbb{C}}\\
@VVfV			 											@VVF_1V\\
\mathbb{S}^2		@>\sigma_1>> 		\hat{\mathbb{C}}\\
@VVfV			 											@VVF_0V\\
\mathbb{S}^2		@>\sigma_0>> 			\hat{\mathbb{C}}
\end{CD}
$
}
\caption{The Medusa and pseudo-equator algorithms are based upon Thurston's algorithm, highlighted in the commutative diagram above.}
\label{commutative1}
\end{figure}

Next, we specify an embedding $\sigma_0$ of the Medusa into $\hat{\mathbb{C}}$ that sends the equator of $\mathbb{S}^2$ to the unit circle, and prescribes desirable images for the legs. The embedding of the two critical values of $f$ can then be used as parameters determining a rational map $F_0:\hat{\mathbb{C}}\rightarrow\hat{\mathbb{C}}$ in a particular normalized form. Pulling back the embedded Medusa by $F_0$ provides a new Medusa structure embedded in $\hat{\mathbb{C}}$, that we take to induce a new embedding of the Medusa, $\sigma_1$. We iterate this process to develop the sequences $\sigma_n$ and $F_n$, where $F_n$ serves as an approximation to the rational map that is the mating of the two polynomials.

Here, we emphasize the motivation behind this work. The Medusa algorithm guarantees convergence of $F_n$ to the geometric mating of polynomials $f_\alpha$ and $f_\beta$ in the event that $f_\alpha$ and $f_\beta$ are strongly mateable.  In \cite{MEDUSA} however, it is noted that the algorithm fails in cases where postcritical points of the formal mating are identified in the topological mating. Indeed, Thurston's algorithm is intended for the setting where $f$ and the desired rational map are Thurston-equivalent---which is not the case when the two polynomials are \emph{not} strongly mateable. Investigating convergence of points at the boundary of Teichmuller space is one possible avenue of approaching this problem, but we suggest something more direct---start with a map $g$ that collapses the necessary points, and is Thurston-equivalent to the desired rational map. This is the essential mating constructed by Tan Lei in \cite{LEI}, and referenced earlier in Section \ref{mating}. Using this map instead of the formal mating would force Thurston's algorithm to generate a convergent sequence. 

Changing the map that we use in conjunction with Thurston's algorithm, however, means that we may no longer use the Medusa structure to model our mating. To overcome this obstacle, we will substitute a different method that provides a combinatorial map representative of the mating. We will thus conclude our discussion of dynamics prerequisites to begin a description of this model.
 
 
 \section{Finite subdivision rules and tilings}\label{yourfsr}
 
 As a reminder, our ultimate goal in this paper will be to iteratively approximate rational maps that are Thurston-equivalent to topological matings. To do so later will require an understanding of the mapping behavior of the essential mating. This section will detail the use of \emph{finite subdivision rules} to construct a model for this behavior.
 

 \subsection{Finite subdivision rules}\label{fsr}  The mapping behavior of complex functions is often demonstrated via a distortion of gridlines: by embedding a grid or tiling in $\mathbb{C}$ and observing how the image or preimage of the complex function distorts the tiling, we visualize the action of our function on the complex plane. We will utilize a similar but more specialized tool to study our mapping behavior, described below.
 
 \begin{definition} A \emph{finite subdivision rule} $\mathcal{R}$ is composed of the following three elements: 

\begin{enumerate}

\item A finite 2-dimensional CW complex $S_\mathcal{R}$, called the \emph{subdivision complex}, with fixed cell structure so that $S_\mathcal{R}$ is the union of its closed 2-cells. Each closed 2-cell $\tilde{s}$ of $S_\mathcal{R}$ must have a CW structure $s$ on a closed 2-disk so that $s$ has $\geq 3$ vertices, the vertices and edges of $s$ are contained in $\partial s$, and the characteristic map $\psi_s:s\rightarrow S_\mathcal{R}$ which maps onto $\tilde{s}$ restricts to a homeomorphism on open cells. More colloquially, we will refer to the subdivision complex as a \emph{tiling}.

\item A finite 2-dimensional CW complex $\mathcal{R}(S_\mathcal{R})$ which is a subdivision of $S_\mathcal{R}$. We will refer to this as a \emph{subdivided tiling}.

\item A continuous cellular map $g_\mathcal{R}: \mathcal{R}(S_\mathcal{R})\rightarrow S_\mathcal{R}$, which maps open cells of $\mathcal{R}(S_\mathcal{R})$ homomorphically to $S_\mathcal{R}$. Such a map $g_\mathcal{R}$ is called a \emph{subdivision map}. 

\end{enumerate}

Such finite subdivision rules may be applied recursively to yield iterated subdivisions of $S_\mathcal{R}$. \textup{\cite{FSRS} }
\end{definition}

In essence, finite subdivision rules heavily parallel the grid distortion technique described above. The primary difference is that when we pull back the initial tiling for a finite subdivision rule, we do not obtain an arbitrarily distorted tiling. Instead, the result is a new tiling which is a subdivision of the original. 

For us, a finite subdivision rule will be a finite combinatorial rule for subdividing tilings on a 2-sphere. We will assume that our tilings can be formed by `filling in' the faces of connected finite planar graphs on this 2-sphere with open tiles that are topological polygons. Each edge of the tiling must be a boundary edge to some tile, and tiles are not allowed to be monogons or digons, but they may be non-convex. We will allow for extreme cases where single edges serve as two sides of the boundary of a single tile. As a rudimentary example of such a finite subdivision rule, consider the following example:

\begin{example} In Figure \ref{fig:fsr}, $\hat{\mathbb{C}}$ is oriented so that the marked points $0, 1,$ and $\infty$ all lie on the equator. The positive real axis with these marked points determines a graph which yields a tiling of $\hat{\mathbb{C}}$ by a single topological quadrilateral. If we take a preimage of this structure under the map $z\mapsto z^2$, we obtain a tiling that has two quadrilaterals---each of which maps homeomorphically onto the quadrilateral in the original tiling. Here, the structure on the left is our subdivision complex $S_\mathcal{R}$, the structure on the right is the subdivided tiling $\mathcal{R}(S_\mathcal{R})$, and the map $z\mapsto z^2$ is the subdivision map.

\begin{figure}[htb]
\center{\includegraphics[width=4in]{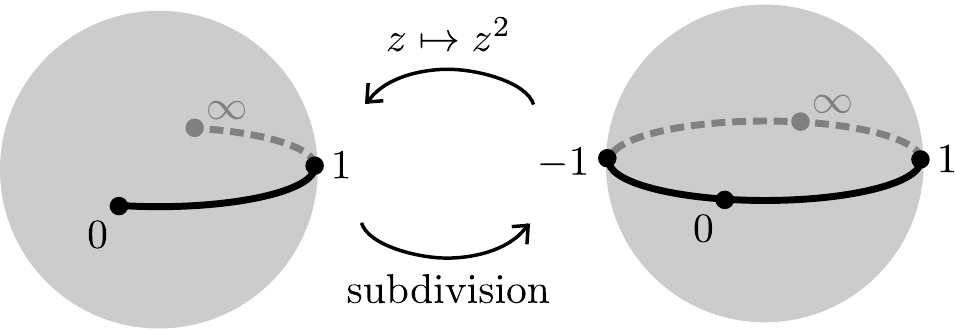}}
\caption{A rudimentary finite subdivision rule on $\hat{\mathbb{C}}$.}
\label{fig:fsr}
\end{figure}

\end{example}

While a finite subdivision rule may be defined using analytic maps and embedded tilings as in the previous example, this is not necessary. We can use the mapping behavior of $n$-cells in a tiling to inductively determine the mapping behavior of ($n+1$)-cells, thus obtaining a subdivision map based on combinatorial data. The reader may reference \cite{FSRS} for a more detailed treatment of this topic.

 \subsection{Hubbard trees}\label{Hubbard}

To build a finite subdivision rule that models the behavior of an essential mating, it will be helpful to have a finite invariant structure in mind to determine a tiling 1-skeleton. We start by considering invariant structures associated with the polynomial pair composing the mating. Julia sets are invariant under iteration of their associated polynomials, but the structure of a typical Julia set is not finite and hence cannot be used as a starting point for a finite subdivision rule. Thus, we would like to work with a discrete approximation to the Julia set: the \emph{Hubbard tree}.

The construction of a Hubbard tree for a polynomial $f_\theta$ can be simplified considerably in the case where  $f_\theta$ is critically preperiodic and has a dendritic Julia set--which is the primary setting for this paper. For the reader's convenience, we thus present a definition restricted to this situation below:

\begin{definition} Let $f_\theta: \mathbb{C} \rightarrow \mathbb{C}$ be given by $f_\theta(z) = z^2 + c$ for some Misiurewicz point $c$, and let $f_\theta$ have Julia set $J_\theta$ and postcritical set $P_{f_\theta}$.

We say that a subset $X$ of $J_\theta$ is \emph{allowably connected} if $x,y\in X$ implies that there is a topological arc in $X$ that connects $x$ and $y$. The \emph{allowable hull} of a subset $A$ in $J_\theta$ is then the intersection of all allowably connected subsets of $J_\theta$ that contain $A$. Finally, the \emph{Hubbard tree} of $f_\theta$ is the allowable hull of $P_{f_\theta}$ in $J_\theta$. \cite{ORSAY}
\end{definition}

\begin{figure}[htb]
\center{\includegraphics{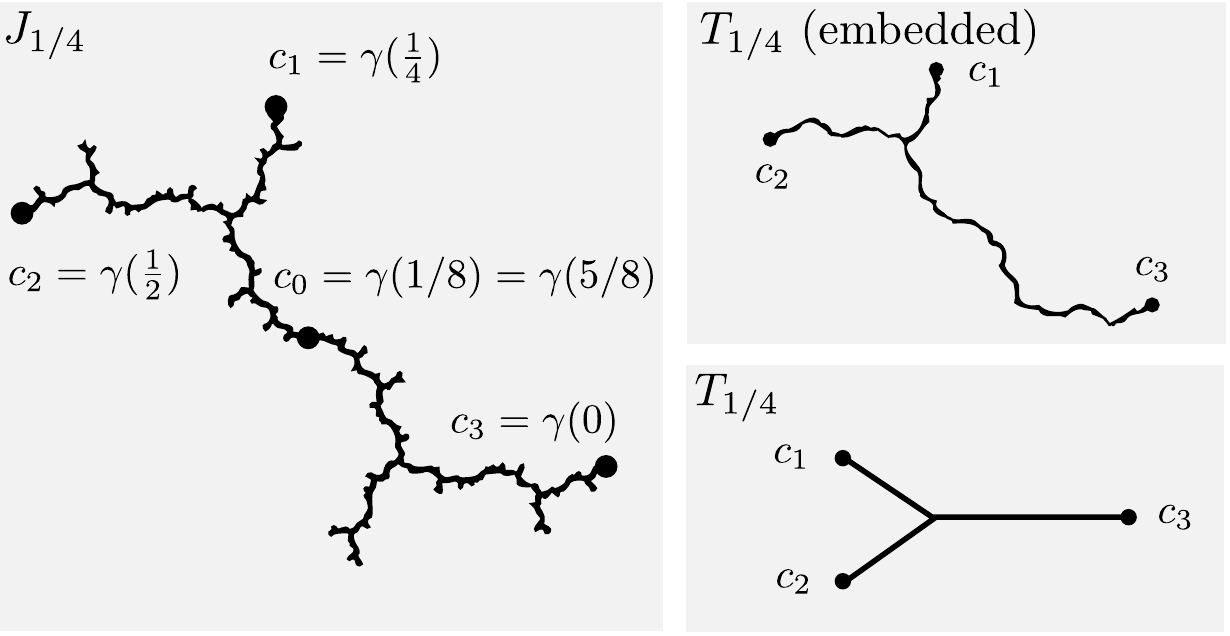}}
\caption{The Julia set and Hubbard trees for $f_{1/4}$.}
\label{hubbardtree}
\end{figure}

The Hubbard tree as defined above is embedded in $\mathbb{C}$ and topologically equivalent to the notion of an \emph{admissible Hubbard tree} with preperiodic critical point as discussed by Bruin and Schleicher in \cite{HUBBARDTREES}. These notes, however, emphasize the combinatorial structure of the Hubbard tree as a graph with vertices marked by elements of $P_{f_\theta}$, rather than as an embedded object in the complex plane. This distinction is emphasized on the right side of Figure \ref{hubbardtree}. Bruin and Schleicher present several explicit algorithms that can be used to construct a topological copy of $T_\theta$ from the parameter $\theta$, building heavily on the notion that quadratic maps are degree 2 at their critical points and behave locally homeomorphically elsewhere. We can further expand upon these observations regarding the behavior of quadratic polynomials to note the action of $f_\theta$ on $T_\theta$: $f_\theta$ acts locally homeomorphically on $T_\theta$ everywhere except at the critical point, which maps with degree two, and iterated preimages  of $T_\theta$ under $f_\theta$ give discrete approximations to $J_\theta$. We thus have that the $n$th preimage of a tree $T_\theta$ under its associated polynomial $f_\theta$ contains $2^n$ miniature copies of the tree that each map homeomorphically onto the tree via $f_\theta^{\circ n}$, as in Figure \ref{hubbardpreim}. Given a Hubbard tree and critical orbit portrait as in this figure, we may make note of the local homeomorphic behavior off of the critical point to `fill in' missing limbs of subsequent preimages of $T_\theta$. In this manner, we may then view $f_\theta$ as inducing a combinatorial map on trees. 

\begin{figure}[htb]
\center{\includegraphics{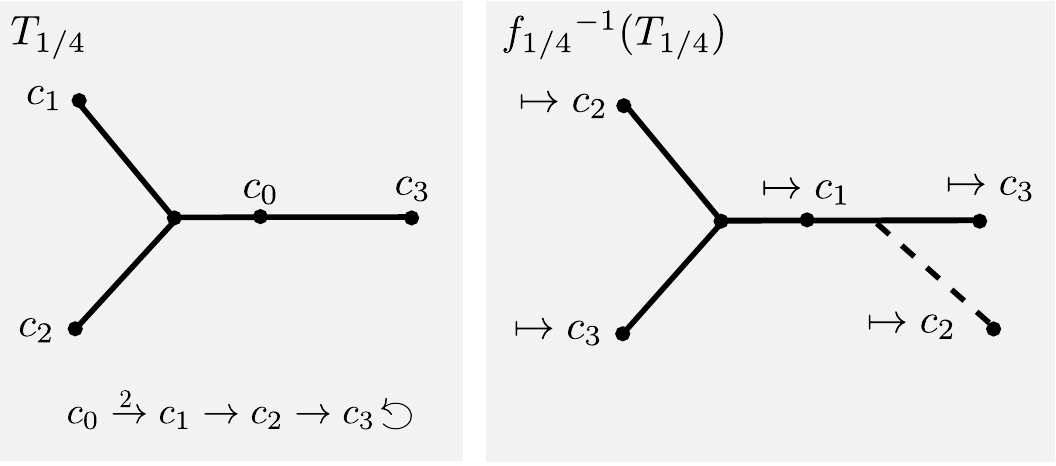}}
\caption{\label{hubbardpreim}{Preimages of a Hubbard tree under its associated polynomial.}}
\end{figure}

\subsection{Construction of a finite subdivision rule}\label{yourfsrrules}
As previously referenced, one of the impediments in using Thurston's algorithm to find geometric matings is the difficulty in recording the topological behavior of a map in a useful way. The goal of this section is to remove this obstacle and provide a combinatorial rule that records enough of the action of the essential mating on $\mathbb{S}^2$ for us to successfully apply Thurston's algorithm.

There are several possible ways to construct finite subdivision rules to model the essential mating of two critically preperiodic quadratic polynomials. Examples for strongly mateable polynomial pairs are given in \cite{DISSERTATION}. In the event that the formal and essential matings are different maps, $\sim_e$ will be a nontrivial equivalence relation and we may have the opportunity to use the construction below which is detailed in both \cite{DISSERTATION} and \cite{FSRCONSTRUCTION}.

\begin{definition}\label{fsrdefn} Suppose $f_\alpha$ and $f_\beta$ are critically preperiodic monic quadratic polynomials such that $x\sim_e y$ for some points $x\in T_\alpha,y\in T_\beta$.
\begin{enumerate}

\item Give $T_\alpha\bigsqcup T_\beta / \sim_e$ a graph structure on the quotient space of the essential mating by marking all postcritical points and branched points as vertices. If need be, mark additional periodic or preperiodic points on $T_\alpha$ or $T_\beta$ and the points on their forward orbits to avoid tiles forming digons. The associated 2-dimensional CW complex for this structure will yield the subdivision complex, $S_\mathcal{R}$.

\item Let $g$ denote the essential mating of $f_\alpha$ and $f_\beta$ and set $\mathcal{R}(S_\mathcal{R})$ to be the preimage of $S_\mathcal{R}$ under $g$, taking preimages of marked points of $S_\mathcal{R}$ to be marked points of $\mathcal{R}(S_\mathcal{R})$ .

\item If $\mathcal{R}(S_\mathcal{R})$ is a subdivision of $S_\mathcal{R}$ and if the essential mating $g:\mathcal{R}(S_\mathcal{R}) \rightarrow S_\mathcal{R}$ is a subdivision map, then $\mathcal{R}$ is a  \emph{finite subdivision rule construction of essential type}.
\end{enumerate}
\end{definition}

In a simplified sense, this finite subdivision rule construction examines how the Hubbard trees of two polynomials are glued together when forming the domain space of the essential mating. The glued pair of trees is a 1-skeleton for a tiling on the 2-sphere; the pullback of this tiling by the essential mating sometimes generates a subdivided tiling. This finite subdivision rule construction then may allow us to reduce the essential mating to a combinatorial map.

\begin{example}\label{f14matingfsrex}
Consider the essential mating of $f_{1/4}$ with itself. The Hubbard tree is presented on the left of Figure \ref{f14selfi}. The postcritical points of $f_{1/4}$ are the landing points of the $1/4$, $1/2$, and $0$ external rays. In the essential mating, the equivalence relation $\sim_e$ dictates that these landing points on opposing trees are identified using a $\theta$ and $1-\theta$ angle pairing. This means that in the self-mating, the pair of $1/2$ landing points are collapsed, as are the pair of $0$ landing points since we take the angles of external rays modulo 1. Gluing two copies of $T_{1/4}$ in this manner produces the graph on the right of Figure \ref{f14selfi}, which we take to be the 1-skeleton of the 2-sphere tiling $S_\mathcal{R}$.

\begin{figure}[htb]
\center{\includegraphics{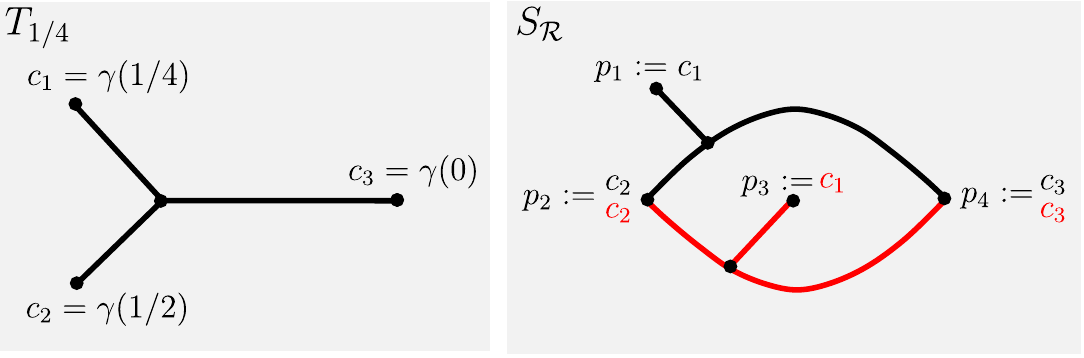}}
\caption{On the left, $T_{1/4}$. On the right, the subdivision complex $S_\mathcal{R}$ for the essential self-mating of $f_{1/4}$.}
\label{f14selfi}
\end{figure}

Now, we may pull back the 1-skeleton of $S_\mathcal{R}$ by the essential mating $g=f_{1/4}\upmodels_e f_{1/4}$, as in Figure \ref{f14selfii}. This process mostly resembles pulling back two Hubbard trees by their respective polynomials (a solved problem as noted in Figure \ref{hubbardpreim}) and keeping track of identifications between these trees by $\sim_e$ at only a finite number of points. A good way to view reconciling these identifications is that the essential mating is a branched covering of the 2-sphere, and should thus behave locally homeomorphically except on the critical set. This means that we must `fill in' new edges in a manner that preserves this homeomorphic behavior, which is up to our discretion by step four in Definition \ref{essential}. We thus obtain the subdivided tiling $\mathcal{R}(S_\mathcal{R})$. 

We can then use the boundary behavior of the four 2-tile hexagons in $\mathcal{R}(S_\mathcal{R})$ to note that these tiles map homeomorphically via $g$ onto the two open 2-tiles in $S_\mathcal{R}$. Thus, the construction generates a finite subdivision rule with subdivision map $g$.

\begin{figure}[htb]
\center{\includegraphics{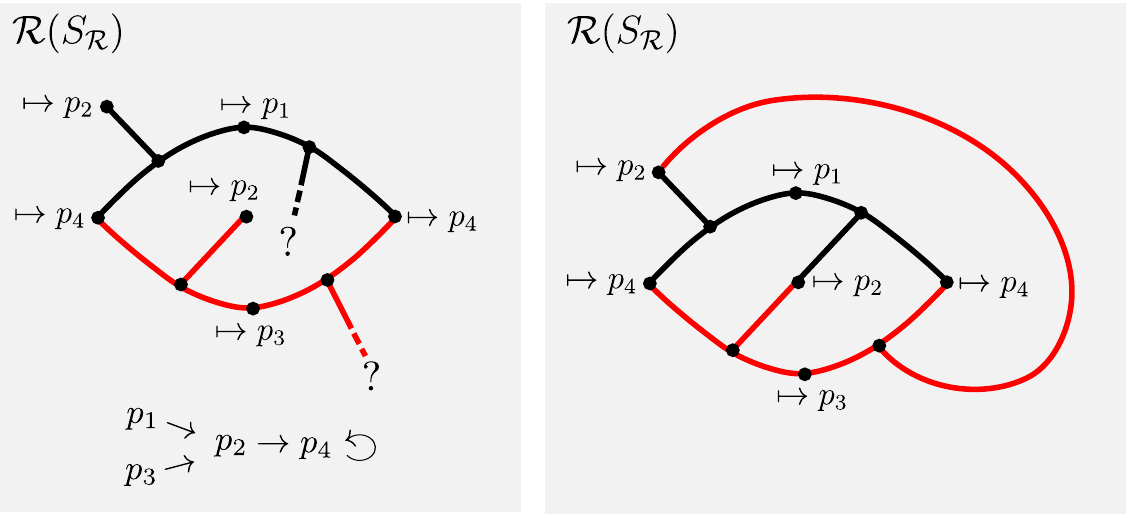}}
\caption{On the left, the expected pullback of $S_\mathcal{R}$ by the essential mating as based on local behavior of Hubbard trees. The essential mating is locally homeomorphic everywhere except on the critical set, so we complete the pullback as shown on the right.}
\label{f14selfii}
\end{figure}
\end{example}

It should be noted that there are some instances in which this construction scheme does not directly generate a finite subdivision rule: 

\begin{theorem}
Let $h$ be the formal mating of $f_\alpha$ and $f_\beta$. The essential type construction fails to yield a finite subdivision rule generated by this polynomial pairing if and only if there exists some $x,y$ in $T_\alpha\bigsqcup T_\beta$ with $x\sim_t y$, $x\not\sim_e y$, and $h(x)\sim_e h(y)$. \cite{FSRCONSTRUCTION}
\end{theorem}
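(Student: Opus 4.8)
The plan is to analyze precisely when the candidate subdivided complex $\mathcal{R}(S_\mathcal{R}) := g^{-1}(S_\mathcal{R})$ fails to be a genuine subdivision of $S_\mathcal{R}$, and to show this failure is detected exactly by the stated combinatorial condition. Recall from Definition \ref{fsrdefn} that the construction succeeds precisely when two things hold: (i) $\mathcal{R}(S_\mathcal{R})$ is a subdivision of $S_\mathcal{R}$, meaning every closed cell of $S_\mathcal{R}$ is a union of closed cells of $\mathcal{R}(S_\mathcal{R})$, and (ii) $g$ restricted to each open cell of $\mathcal{R}(S_\mathcal{R})$ is a homeomorphism onto an open cell of $S_\mathcal{R}$. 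Since $g$ is a branched covering that is locally homeomorphic off the critical set, condition (ii) is automatic once the 1-skeleton $g^{-1}(\text{1-skeleton of }S_\mathcal{R})$ is built correctly; the only way the whole scheme breaks down is when the preimage of the 1-skeleton of $S_\mathcal{R}$ fails to be a legitimate refinement — concretely, when two edges of $S_\mathcal{R}$ that are \emph{distinct} in $\mathbb{S}'^2$ have preimages under $g$ that are forced to coincide, or when a vertex preimage lands in the interior of an edge in a way that cannot be repaired. I would phrase the whole argument in terms of the formal mating $h$ and the projection $\pi : \mathbb{S}^2 \to \mathbb{S}'^2$, pushing the analysis back to the well-understood Hubbard-tree-pullback picture of Section \ref{Hubbard}.

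First I would set up the dictionary: the 1-skeleton of $S_\mathcal{R}$ is $\pi(T_\alpha \sqcup T_\beta)$ with marked vertices, and an edge of $\mathcal{R}(S_\mathcal{R})$ is a component of $g^{-1}$ of an edge of $S_\mathcal{R}$. Away from the collapsed set, $g = \pi \circ h \circ \pi^{-1}$, so preimages of edges correspond to preimages of arcs of $T_\alpha, T_\beta$ under $h$, which by the discussion around Figure \ref{hubbardpreim} subdivide the trees cleanly. The potential for trouble is localized entirely at the points where $\sim_e$-collapsing interacts with $h$-preimages. So the next step is to identify the obstruction: suppose $x \sim_t y$ but $x \not\sim_e y$, so $x$ and $y$ project to \emph{distinct} points (or lie on distinct edges) of $S_\mathcal{R}$, yet $h(x) \sim_e h(y)$, so $h(x)$ and $h(y)$ project to the \emph{same} point of $S_\mathcal{R}$. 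Then $\pi(x)$ and $\pi(y)$ are two distinct preimages under $g$ of a single point $\pi(h(x)) = \pi(h(y))$ of $S_\mathcal{R}$, but they sit on arcs that, in $\mathbb{S}^2$ (before the equator-collapse defining $\sim_t$), are identified along the equator — so the corresponding preimage edges in $\mathcal{R}(S_\mathcal{R})$ get glued to each other in a way not reflected in $S_\mathcal{R}$, destroying the subdivision property (one obtains, e.g., a monogon or digon, or an edge covered twice). I would verify this forces failure of condition (i) directly from the cell-structure definitions.

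Conversely — and this is the direction requiring care — I would show that if no such triple $x,y$ exists, the construction \emph{does} succeed. The argument: build the 1-skeleton of $\mathcal{R}(S_\mathcal{R})$ as $g^{-1}(\pi(T_\alpha \sqcup T_\beta))$, lift locally via $\pi^{-1}$ to the formal-mating picture where pulling back glued Hubbard trees is understood, and check that the only identifications among preimage edges are those induced by $\sim_e$ itself applied one level down — which is consistent with a subdivision precisely because the bad triple is absent. One then invokes that $g$ is a branched cover locally homeomorphic off $\Omega_g$, together with the freedom granted by step four of Definition \ref{essential} to choose the homeomorphisms $\pi(U_{ij}) \to \pi(V_j)$, to `fill in' the remaining edges and faces so that each 2-cell of $\mathcal{R}(S_\mathcal{R})$ maps homeomorphically onto a 2-cell of $S_\mathcal{R}$; no digons or monogons arise because we assumed extra marked points were added in step one of Definition \ref{fsrdefn} to prevent them. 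I would close by noting that $g$ cellular plus the subdivision property is exactly the definition of a subdivision map, so $\mathcal{R}$ is a finite subdivision rule.

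The main obstacle I anticipate is the converse direction, specifically proving that the absence of the bad triple genuinely rules out \emph{all} possible pathologies in the preimage 1-skeleton — not just the obvious edge-gluing one. I would need to enumerate the ways $g^{-1}(S_\mathcal{R})$ could fail to refine $S_\mathcal{R}$ (an edge preimage with an endpoint in the relative interior of another edge; two edge preimages sharing more than endpoints; a face preimage that is not a disk) and trace each back to a statement about how $\sim_e$, $\sim_t$, and $h$ interact at postcritical and critical points, showing each reduces to the existence of a triple with $x \sim_t y$, $x \not\sim_e y$, $h(x) \sim_e h(y)$. Keeping the bookkeeping honest between the three spaces $\mathbb{S}^2$ (formal), $\mathbb{S}'^2$ (essential), and the abstract tiling $S_\mathcal{R}$, and between the relations $\sim_t$ and $\sim_e$, will be where the real work lies.
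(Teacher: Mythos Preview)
The paper does not actually prove this theorem: it is stated with a citation to \cite{FSRCONSTRUCTION} and no proof is given here. What follows the statement in the paper is only a one-paragraph informal gloss, to the effect that the construction succeeds exactly when every open tile of $\mathcal{R}(S_\mathcal{R})$ maps homeomorphically onto an open tile of $S_\mathcal{R}$, and fails when some tile maps with nonhomeomorphic behavior. So there is no ``paper's own proof'' to compare your proposal against.

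That said, your plan is broadly compatible with the paper's informal explanation, though your emphasis is different. You locate the obstruction in condition (i) --- failure of $g^{-1}(S_\mathcal{R})$ to \emph{refine} $S_\mathcal{R}$ at the level of the 1-skeleton --- and claim condition (ii), homeomorphic tile-mapping, is automatic once the 1-skeleton is right. The paper's gloss phrases the obstruction as a failure of (ii): a tile of the pullback mapping nonhomeomorphically onto $S_\mathcal{R}$. These two viewpoints are likely equivalent (a tile folds onto itself exactly when two boundary edges that ought to be distinct are identified in the preimage), but you should make that equivalence explicit rather than asserting that (ii) is automatic; as written, your proposal dismisses precisely the mode of failure the paper highlights. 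The converse direction you flag as the hard part is indeed where the real content lies, and your enumeration-of-pathologies strategy is the right shape; without access to the proof in \cite{FSRCONSTRUCTION} one cannot say whether that source handles it the same way.
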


In short, this theorem notes that when all open tiles of the pullback $\mathcal{R}(S_\mathcal{R})$ can be mapped onto some open tile of $S_\mathcal{R}$, we will have a subdivision rule. Otherwise, if some tile of $\mathcal{R}(S_\mathcal{R})$ maps to $S_\mathcal{R}$ with nonhomeomorphic behavior, the construction does not allow for the essential mating to serve as a subdivision map between the two tilings. In such a case, we may require slight modification to the essential type construction 
in order to obtain a formal finite subdivision rule. Options for altering the construction to obtain a valid finite subdivision rule are detailed in \cite{DISSERTATION}.

\subsection{Pseudo-equators}


Theorem \ref{LRS} expresses when a mating can be viewed as equivalent to a rational map---but what about when a rational map can be viewed as a mating? In \cite{UNMATING}, it is shown that a postcritically finite rational map with Julia set the 2-sphere can be expressed as a mating if the map possesses a \emph{pseudo-equator}:

\begin{definition}\label{pseudoequator}

A homotopy $H: X\times [0,1]\rightarrow X$ is a \emph{pseudo-isotopy} if $H: X\times [0,1)\rightarrow X$ is an isotopy. We will assume $H_0=H(x,0)=x$ for all $x\in X$.

Let $f$ be a postcritically finite rational map, $C_0\subseteq \hat{\mathbb{C}}$ be a Jordan curve with $P_f\subseteq C_0$, and $C_1=f^{-1}(C_0)$. Then we say that $f$ has a \emph{pseudo-equator} if it has a pseudo-isotopy $H: \mathbb{S}^2\times[0,1]\rightarrow \mathbb{S}^2$ rel. $P_f$ with the following properties:

\begin{enumerate}
\item $H_1(C_0)=C_1$.
\item The set of points $w\in C_0$ such that $H_1(w)\in f^{-1}(P_f)$ is finite. (We will let $W$ denote the set of all such $w$.)
\item $H_1:C_0\backslash W\rightarrow C_1\backslash f^{-1} (P_f)$ is a homeomorphism.
\item $H$ deforms $C_0$ orientation-preserving to $C_1$.
\end{enumerate}

\end{definition}

Possession of a pseudo-equator is not a necessary condition for a rational map to be equivalent to a mating, but considering how such a property arises from a mating will be useful in developing insight on the mapping properties of the essential mating.

\begin{theorem}\label{pseudothm}
Set $g=f_\alpha\upmodels_ef_\beta$, and let $\mathbb{S}'^2$ denote the quotient space which is the domain of $g$. If there exists some Jordan curve $C$ on $\mathbb{S}'^2$ which contains $P_g$ and separates $(T_\alpha/\sim_e)\setminus P_g$ from $(T_\beta/\sim_e)\setminus P_g$, then $g$ has a pseudo-equator. \cite{FSRCONSTRUCTION}
\end{theorem}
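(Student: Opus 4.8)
The plan is to construct the required pseudo-isotopy $H: \mathbb{S}'^2 \times [0,1] \to \mathbb{S}'^2$ explicitly from the combinatorial picture of the essential mating, using the Jordan curve $C$ as the target-at-time-one deformation of an equatorial curve. First I would set up notation: let $C_0 = C$ be the given Jordan curve containing $P_g$ and separating the two (quotiented) Hubbard trees, and let $C_1 = g^{-1}(C_0)$. Since $g$ is the essential mating, $C_0$ bounds two closed disks $D_\alpha \supseteq T_\alpha/\sim_e$ and $D_\beta \supseteq T_\beta/\sim_e$, and $g$ acts on each of these disks in a way modeled (off the critical set) by the polynomial action on the corresponding Hubbard tree together with its complementary region. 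The first real step is to understand $C_1 = g^{-1}(C_0)$ combinatorially: because $g$ has degree two and is modeled by the two polynomials $f_\alpha, f_\beta$, the preimage $C_1$ is a curve (or graph) that still separates the two trees but wraps around more intricately, passing through the points of $g^{-1}(P_g)$; the finitely many points $W \subseteq C_0$ of Definition \ref{pseudoequator}(2) will be exactly the points of $C_0$ that $H_1$ must send onto $g^{-1}(P_g) \setminus P_g$.

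The core of the argument is then to build $H$ as a deformation that sweeps $C_0$ across to $C_1$ while dragging the ambient sphere along, keeping $P_g$ fixed throughout. I would do this by working in the two complementary disks separately and gluing. On the side containing $T_\beta/\sim_e$, say, one can use the product structure of an annular collar of $C_0$ together with the tree-complement to define an isotopy (for $t < 1$) that pushes $C_0$ toward $C_1$; at time $t=1$ the curve is allowed to degenerate precisely at the points of $W$, which is what turns an honest isotopy into a pseudo-isotopy. The separation hypothesis is exactly what guarantees there is ``room'' to carry out this push without colliding with $P_g$ or forcing the curve to cross itself before time $1$: because $C$ separates $T_\alpha/\sim_e$ from $T_\beta/\sim_e$ with all postcritical points on $C$ itself, the interiors of the two trees lie in the open complementary disks and never obstruct the homotopy. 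Verifying properties (1)--(4) of Definition \ref{pseudoequator} is then a matter of bookkeeping: (1) is the design goal $H_1(C_0) = C_1$; (3) says $H_1$ restricts to a homeomorphism away from $W$, which holds because the only failure of injectivity of the sweep is the finitely many collapses at $W$ where the collar annulus gets pinched against $g^{-1}(P_g)$; (2) is the finiteness of $W$, which follows from postcritical finiteness and the combinatorial description of $C_1$; and (4), orientation-preservation, is immediate from the construction since we never reverse the ambient orientation.

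I expect the main obstacle to be step (3) — showing that $H_1$ is genuinely a homeomorphism from $C_0 \setminus W$ onto $C_1 \setminus g^{-1}(P_f)$, and relatedly that $H_1(C_0)$ is \emph{exactly} $C_1$ rather than merely a curve isotopic to it. This requires controlling how the preimage curve $g^{-1}(C_0)$ threads through the sphere: near the critical points of $g$ the degree-two behavior makes $C_1$ locally look like two arcs crossing, and one must check the sweep deformation matches this local picture so that the collapse points $W$ correspond precisely to the preimages of postcritical points that are not themselves postcritical. Making this matching precise will likely lean on the finite subdivision rule / Hubbard tree description developed earlier in the paper, using the tiling structure of $S_\mathcal{R}$ and $\mathcal{R}(S_\mathcal{R})$ to identify $C_0$ and $C_1$ with unions of edges and to read off $W$ as a finite set of vertices. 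Everything else — the existence of the collar, the gluing of the two half-sphere homotopies along $C_0$, and orientation-preservation — should be routine once this combinatorial identification is in hand.
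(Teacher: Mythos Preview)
Your proposal is broadly correct in spirit but takes a different route from the paper's sketch. You build the pseudo-isotopy abstractly, via annular collars on the two complementary disks and a gluing, and you correctly flag property~(3) as the hard part. The paper instead exploits the external-ray structure inherited from the formal mating: both $C_0$ and its pullback $C_1$ admit natural parameterizations $C_0,C_1:[0,1)\to\mathbb{S}'^2$ by external angle (traversing in the direction of increasing angle for, say, the $\alpha$-polynomial), and the pseudo-isotopy $H$ is then defined by pushing each point $C_0(t)$ along the external ray of angle $t$ until it reaches $C_1(t)$. Postcritical points lie on both curves at the same angles, so they are automatically fixed; orientation-preservation is immediate; and finiteness of $W$ and the homeomorphism property off $W$ follow because $g$ is the subdivision map of a finite subdivision rule.

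The payoff of the paper's approach is that your main anticipated obstacle---showing $H_1(C_0)=C_1$ exactly and that $H_1$ is a homeomorphism on $C_0\setminus W$---evaporates: the external-angle parameterization gives an explicit bijection $C_0(t)\mapsto C_1(t)$ by construction, so there is no matching to verify. Your collar-and-glue argument should also work, but it leaves more to check and does not leverage the canonical coordinate that the mating construction hands you for free. If you revise, the cleanest fix is to replace the abstract collar push with the explicit ``slide along external rays'' homotopy.
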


We provide a sketch of the proof in \cite{FSRCONSTRUCTION}.  Given a finite subdivision rule formed using the construction from the previous section, the curve $C_0$ and associated pseudo isotopy can sometimes be constructed in a natural way from the equator $\mathcal{E}$ of the formal mating. If $C_0:=\mathcal{E}/\sim_e$ is a Jordan curve, $C_0$ separates the 2-sphere into two components; the closure of each containing the Hubbard tree of a polynomial in the mating. We will assume one to be colored black and one to be colored red. More simply, $C_0$ will appear as a simple closed curve drawn through $P_g$ that `separates' the red and black trees in the 1-skeleton of $S_\mathcal{R}$. We can then find the pullback curve $C_1$ using the finite subdivision rule construction from the previous section: the subdivision map assists us in pulling back $C_0$ as in Figure \ref{meyerex}, since open 2-tiles map homeomorphically.

We may then construct a pseudo-isotopy as described in Definition \ref{pseudoequator} between $C_0$ and its pullback $C_1$. Since both $C_0$ and $C_1$ can be taken to separate the red and black Hubbard trees off of the postcritical set, we may assume an orientation on both of these curves given by traversing each in the direction of increasing external angles for the black polynomial. This suggests natural parameterizations $C_0, C_1:[0,1)\rightarrow \mathbb{S}^2$ where points on each curve are given as a function of the associated external angle. We may then view $H$ as any homotopy which continuously distorts $C_0$ into $C_1$ by `pushing' each point $C_0(t)$ along the $t$ external ray to the point $C_1(t)$. (We may note that since both curves pass through the postcritical set, these points will be fixed by the homotopy.) Such a homotopy preserves orientation. Further, since $g$ is the subdivision map of a finite subdivision rule, we have that the remaining conditions on finiteness and homeomorphic mapping behavior for a pseudo-equator are satisfied.

A crucial idea to observe here is that if we can build a finite subdivision rule using an essential mating which has one of these pseudo-equators, pullbacks of  $C_0$ behave in a predictable manner due to the pseudo-isotopy: they can be formed by deforming our original curve in an orientation-preserving manner and `pinching' at the critical points, as in Example \ref{f14f14pseudoex}:

\begin{example}\label{f14f14pseudoex}

We deepen our examination of the essential self-mating $g=f_{1/4}\upmodels_e f_{1/4}$. In Figure \ref{meyerex}, we first note the previously obtained finite subdivision rule for this mating: the 1-skeletons of $S_\mathcal{R}$ and $\mathcal{R}(S_\mathcal{R})$ are shown in red and black. Next, we form $C_0$ on the left by drawing a Jordan curve through $P_g$ that separates the red and black Hubbard trees on $S_\mathcal{R}$. Noting the homeomorphic mapping behavior of $g$ on open tiles of $\mathcal{R}(S_\mathcal{R})$, we may determine the location of the pullback curve $C_1$ on the right. We establish an orientation on both $C_0$ and $C_1$ to be given by traversing each curve so that the black tree is always on the left.

\begin{figure}[htb]
\center{\includegraphics{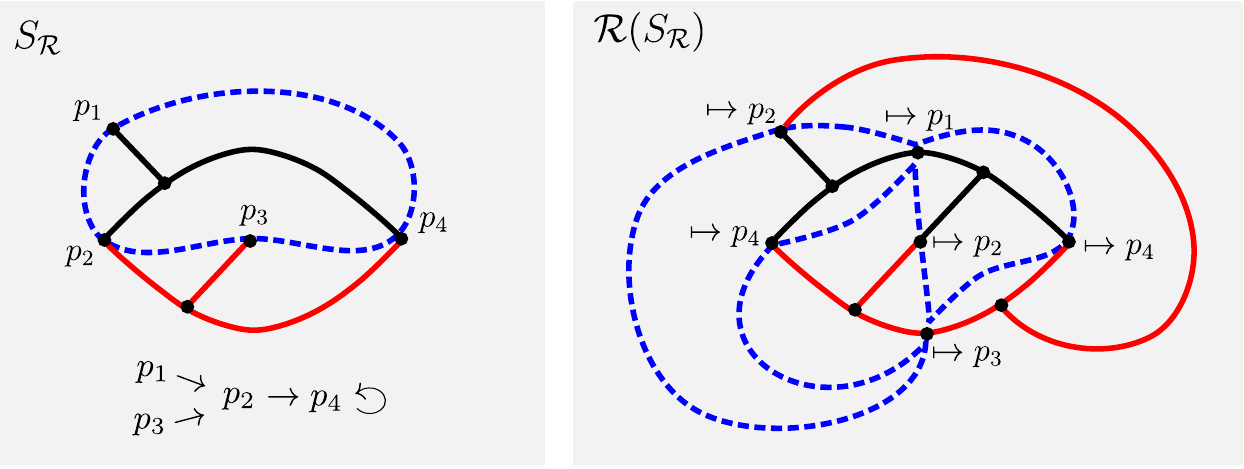}}
\caption{\label{meyerex}{The finite subdivision rule associated with $f_{1/4}\upmodels_ef_{1/4}$, along with marked pseudo-equator curves. $C_0$ is marked in blue on the left and its pullback $C_1$ is marked in blue on the right.}}
\end{figure} 

In Figure \ref{meyerex}, the action of the pseudo-isotopy $H$ is to deform $C_0$ into $C_1$ by pinching the arc from $p_1$ to $p_4$ and the arc from $p_2$ to $p_3$ together at the critical point of the black tree. Simultaneously, $H$ pinches the remaining arcs together at the critical point of the red tree.\end{example}




\section{Main Results}\label{algorithms}

\subsection{Theory for the pseudo-equator algorithm}\label{convergence}

We utilize two normalization conventions given in \cite{MEDUSA}; one regarding embeddings of $\mathbb{S}$ to $\hat{\mathbb{C}}$ and one regarding rational maps. 

First, suppose that $g:\mathbb{S}^2\rightarrow\mathbb{S}^2$ is the essential mating of two critically preperiodic quadratic polynomials $f_\alpha$ and $f_\beta$, and that $w_\alpha,w_\beta$ are the two critical points of $g$. Let $\mathcal{H}$ be the set of orientation preserving maps $\sigma:\mathbb{S}^2\rightarrow\hat{\mathbb{C}}$, normalized so that $\sigma(w_\alpha)=0, \sigma(w_\beta)=\infty,$ and $\sigma(1)=1$.

Then, any $\sigma\in\mathcal{H}$ can be taken as a global chart yielding a complex structure on $\mathbb{S}^2$. In this manner, we can take $\sigma$ to be a representative of some element of $\mathcal{T}_f$. Conversely, $\mathbb{S}^2$ equipped with a complex structure is conformally isomorphic to $\hat{\mathbb{C}}$, and so we may assume the existence of an associated conformal isomorphism $\sigma: \mathbb{S}^2\rightarrow\hat{\mathbb{C}}$ normalized on $0,1$, and $\infty$ as above.

Next, we note that rational maps of degree 2 can be normalized via conjugation with Mobius transformations so that 0 and $\infty$ are critical points and 1 is a fixed point. We'll refer to the collection of such normalized maps as $\mathcal{F}$, and note the following lemma:

\begin{lemma}[Henriksen, Lynch Boyd]\label{rationalmaplemma} Given two distinct points $u,v\in \hat{\mathbb{C}}\backslash\{1\}$, there exists a unique $F\in \mathcal{F}$ so that $F(0)=u$ and $F(\infty)=v$. If $u,v\neq\infty$, this map $F$ takes the form $F_{u,v}(z)= \frac{(u-1)vz^2-u(v-1)}{(u-1)z^2-(v-1)}$. The desired map $F$ is intuitively similar in structure in the event that $u$ or $v$ is equal to $\infty$.

Any degree 2 rational map normalized in this manner is uniquely determined by its two critical values $u$ and $v$. \cite{MEDUSA}
\end{lemma}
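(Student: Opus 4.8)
The plan is to verify the two assertions in Lemma \ref{rationalmaplemma} directly from the normalized form $F_{u,v}$, treating existence and uniqueness separately. First I would establish that any degree-2 rational map $F$ with critical points at $0$ and $\infty$ and fixed point at $1$ can be written as a ratio of quadratics in a constrained way: since $0$ is a critical point, $F$ must be (up to scaling) of the form $\frac{az^2+b}{cz^2+d}$ — the absence of a linear term in numerator and denominator is forced by the condition that $F'(0)=0$, and the condition that $\infty$ is a critical point similarly forces the numerator and denominator to have the same degree (so no $z^3$ or higher, and genuinely quadratic top and bottom). I would then impose $F(1)=1$, which gives the single linear relation $a+b=c+d$. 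This leaves a two-parameter family (after projectivizing the coefficients), which I would coordinatize by the critical values $u=F(0)=b/d$ and $v=F(\infty)=a/c$.

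Next I would solve for the coefficients in terms of $u$ and $v$. Writing $b = ud$ and $a = vc$ and substituting into $a+b=c+d$ gives $vc + ud = c+d$, i.e. $c(v-1) = d(1-u)$, so we may take $c = u-1$ and $d = -(v-1)$ (checking this is consistent: then $a = v(u-1)$ and $b = -u(v-1)$), which reproduces exactly $F_{u,v}(z)= \frac{(u-1)vz^2-u(v-1)}{(u-1)z^2-(v-1)}$. At this stage I would check the nondegeneracy hypotheses are used correctly: the map is genuinely degree $2$ precisely when the numerator and denominator are not proportional, and one verifies that proportionality of $(v(u-1), -u(v-1))$ with $(u-1, -(v-1))$ forces $u=v$; excluding $u=v$ (the points are \emph{distinct}) keeps $F_{u,v}$ of degree $2$. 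The constraints $u,v\neq 1$ ensure $F(1)=1$ is not forced to be a critical value collision and that $d = -(v-1)\neq 0$, so the formula is well-defined; and when $u=\infty$ or $v=\infty$ one takes the obvious limiting form (e.g. $u=\infty$ forces $d=0$, giving $F(z) = \frac{(u-1)vz^2 - u(v-1)}{(u-1)z^2}$ rescaled, i.e. a polynomial-type degenerate case), which I would note rather than belabor.

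For uniqueness, the argument above is already essentially a bijection: I reduced "normalized degree-2 rational map" to "point $(a{:}b{:}c{:}d)$ in projective space subject to $a+b=c+d$ and the degree-2 condition," and then showed the pair $(u,v)$ determines $(a{:}b{:}c{:}d)$ uniquely via the linear system $b=ud$, $a=vc$, $c(v-1)=d(1-u)$. Conversely, $u$ and $v$ are recovered from $F$ as $F(0)$ and $F(\infty)$, so the correspondence $F \leftrightarrow (u,v)$ is a bijection on the relevant domains — this gives both the existence of a unique $F\in\mathcal{F}$ with prescribed critical values and the final sentence of the lemma. The main obstacle I anticipate is purely bookkeeping: carefully justifying that the normalized form \emph{must} be $\frac{az^2+b}{cz^2+d}$ (this uses that a critical point of a rational map is a ramification point, so both $F$ and, after inversion, $1/F$ must have vanishing derivative there, pinning down which monomials can appear), and then handling the $\infty$-valued cases of $u$ or $v$ cleanly without writing out each degenerate formula in full. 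None of this is deep; the content is entirely in setting up the coefficient normalization correctly and being honest about where $u\neq v$ and $u,v\neq 1$ enter.
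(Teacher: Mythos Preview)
Your argument is correct. The reduction to the form $\frac{az^2+b}{cz^2+d}$ is the right move; to make it airtight you should note explicitly why the linear terms must vanish: from $a_1 b_0 = a_0 b_1$ and $a_1 b_2 = a_2 b_1$ (the criticality conditions at $0$ and $\infty$), if either $a_1$ or $b_1$ were nonzero the coefficient vectors of numerator and denominator would be proportional and $F$ would collapse to a constant. After that your coefficient-chasing and the nondegeneracy check $u\neq v$, $u,v\neq 1$ are fine.

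As for comparison with the paper: there is nothing to compare. The paper does not prove Lemma~\ref{rationalmaplemma} at all---it simply states the normalization and cites \cite{MEDUSA} for the result. Your write-up therefore supplies what the paper omits, and it does so along the same elementary lines one finds in the cited source: parameterize the normalized family by coefficients, impose the three normalization conditions, and read off the bijection with the pair of critical values. The only cosmetic point is that your handling of the $u=\infty$ or $v=\infty$ cases is a bit hand-wavy (``rescaled, i.e.\ a polynomial-type degenerate case''); it would be cleaner to just record, for instance, that $u=\infty$ forces $d=0$ and hence $F(z)=v+\dfrac{1-v}{z^2}$, and symmetrically for $v=\infty$. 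But this is exactly the ``intuitively similar in structure'' clause the lemma itself waves at, so you are matching the paper's level of detail.
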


We may then present the following theorems:

\begin{theorem}\label{convergencetheorem}
Let $\sigma_n\in\mathcal{H}$ be given. Set $u_n=\sigma_n\circ g(w_\alpha), v_n=\sigma_n\circ g(w_\beta)$, and let $F_{u_n,v_n}$ be the map described in Lemma \ref{rationalmaplemma}. Then, there exists a unique mapping $\sigma_{n+1}\in\mathcal{H}$ such that the following diagram commutes:

$$\begin{CD}
\mathbb{S}^2		@>\sigma_{n+1}>> 		\hat{\mathbb{C}}\\
@VVgV			 					@VVF_{u_n,v_n}V\\
\mathbb{S}^2		@>\sigma_n>> 		\hat{\mathbb{C}}
\end{CD}
$$

Further, if $\sigma_n$ and $\sigma_n'$ represent the same element in $\mathcal{T}_g$, then $F_{u_n,v_n}$ and $F_{u_n,v_n}'$ are the same rational map and the lifts $\sigma_{n+1}$ and $\sigma_{n+1}'$ similarly represent the same element of $\mathcal{T}_g$ as well.
\end{theorem}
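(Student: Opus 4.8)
The plan is to realize $\sigma_{n+1}$ as a lift of the degree‑two branched cover $\sigma_n\circ g$ through the degree‑two branched cover $F_{u_n,v_n}$, as in the corresponding step of the Medusa algorithm \cite{MEDUSA}, and then to track how this lifting interacts with the equivalence relation defining $\mathcal{T}_g$.

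\emph{Existence and uniqueness of $\sigma_{n+1}$.} First I would record the non‑degeneracy facts that the essential‑mating setting supplies: the two critical values $g(w_\alpha),g(w_\beta)$ are distinct points of $\mathbb{S}^2$, and neither is the distinguished fixed point $1$ (which satisfies $g(1)=1$). Hence $u_n=\sigma_n(g(w_\alpha))$ and $v_n=\sigma_n(g(w_\beta))$ are distinct points of $\hat{\mathbb{C}}\setminus\{1\}$, so Lemma \ref{rationalmaplemma} produces $F:=F_{u_n,v_n}\in\mathcal{F}$. Since $\sigma_n$ is a homeomorphism and $g$ is a degree‑two branched covering with critical points $w_\alpha,w_\beta$, the map $\sigma_n\circ g$ is a degree‑two branched covering of $\mathbb{S}^2$ with branch values exactly $u_n,v_n$ and with $(\sigma_n\circ g)^{-1}(u_n)=\{w_\alpha\}$, $(\sigma_n\circ g)^{-1}(v_n)=\{w_\beta\}$; likewise $F$ is a degree‑two branched covering with branch values $u_n,v_n$ and $F^{-1}(u_n)=\{0\}$, $F^{-1}(v_n)=\{\infty\}$. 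Deleting the preimages of $\{u_n,v_n\}$, both $\sigma_n\circ g$ and $F$ become connected double covers of the twice‑punctured sphere $\hat{\mathbb{C}}\setminus\{u_n,v_n\}$, whose fundamental group $\mathbb{Z}$ has a unique index‑two subgroup; so these covers are isomorphic. Applying the path‑lifting criterion with basepoint $1\in\mathbb{S}^2$ (note $\sigma_n\circ g(1)=1\notin\{u_n,v_n\}$ and $F(1)=1$) and lifting the basepoint to $1\in\hat{\mathbb{C}}$, I get a unique homeomorphism $\sigma_{n+1}:\mathbb{S}^2\setminus\{w_\alpha,w_\beta\}\to\hat{\mathbb{C}}\setminus\{0,\infty\}$ with $F\circ\sigma_{n+1}=\sigma_n\circ g$ and $\sigma_{n+1}(1)=1$. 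Because $\sigma_n\circ g$ and $F$ have the local model $z\mapsto z^2$ at the deleted points, $\sigma_{n+1}$ extends across them to a homeomorphism $\mathbb{S}^2\to\hat{\mathbb{C}}$ with $\sigma_{n+1}(w_\alpha)=0$ and $\sigma_{n+1}(w_\beta)=\infty$, and it is orientation preserving because $\sigma_n\circ g$ and $F$ are; thus $\sigma_{n+1}\in\mathcal{H}$. For uniqueness within $\mathcal{H}$: any other lift is $\iota\circ\sigma_{n+1}$, where $\iota$ is the nontrivial deck transformation of $F$, which (in all cases of Lemma \ref{rationalmaplemma}) is the Möbius involution $z\mapsto -z$; since $\iota(1)=-1\neq1$, the normalization $\sigma_{n+1}(1)=1$ singles out our lift.

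\emph{Independence of the $\mathcal{T}_g$‑class.} Now suppose $\sigma_n,\sigma_n'\in\mathcal{H}$ represent the same point of $\mathcal{T}_g$. By the definition of $\mathcal{T}_g$, there is an orientation‑preserving homeomorphism $\phi$ of $\mathbb{S}^2$, isotopic to the identity relative to the marked set $\hat P:=P_g\cup\{w_\alpha,w_\beta,1\}$ (which must be included for $\mathcal{H}$ to be a valid normalization slice, and which satisfies $\hat P\subseteq g^{-1}(\hat P)$), with $\sigma_n=\sigma_n'\circ\phi$. Since $g(w_\alpha)\in P_g\subseteq\hat P$ is fixed by $\phi$, we get $u_n=\sigma_n(g(w_\alpha))=\sigma_n'(g(w_\alpha))=u_n'$, and likewise $v_n=v_n'$; by the uniqueness clause of Lemma \ref{rationalmaplemma} the rational maps coincide, $F:=F_{u_n,v_n}=F_{u_n',v_n'}$. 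Next I would lift the isotopy: choosing $H:\mathbb{S}^2\times[0,1]\to\mathbb{S}^2$ with $H_0=\mathrm{id}$, $H_1=\phi$, each $H_t$ fixing $\hat P$ pointwise, I restrict $H$ to $\mathbb{S}^2$ minus the two branch values of $g$ and lift through the covering $g$ (isotopy lifting, starting from the lift $\mathrm{id}$ of $g$); this lifted isotopy extends across the finitely many critical fibers to an isotopy $\{\tilde H_t\}$ of $\mathbb{S}^2$ with $\tilde H_0=\mathrm{id}$ and $g\circ\tilde H_t=H_t\circ g$. A continuity‑in‑a‑discrete‑set argument shows each $\tilde H_t$ fixes $g^{-1}(\hat P)\supseteq\hat P$ pointwise, so $\tilde\phi:=\tilde H_1$ is an orientation‑preserving homeomorphism, isotopic to the identity relative to $\hat P$, fixing $w_\alpha,w_\beta,1$, with $\phi\circ g=g\circ\tilde\phi$. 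Then
\[
F\circ\sigma_{n+1}=\sigma_n\circ g=\sigma_n'\circ\phi\circ g=\sigma_n'\circ g\circ\tilde\phi=F\circ\sigma_{n+1}'\circ\tilde\phi,
\]
so $\sigma_{n+1}'\circ\tilde\phi\circ\sigma_{n+1}^{-1}$ is a homeomorphism of $\hat{\mathbb{C}}$ fixed by composition with $F$, i.e. a deck transformation of $F$, hence equals $\mathrm{id}$ or $\iota$. Evaluating it at $\sigma_{n+1}(1)=1$ gives $\sigma_{n+1}'(\tilde\phi(1))=\sigma_{n+1}'(1)=1\neq-1=\iota(1)$, so it is $\mathrm{id}$; that is, $\sigma_{n+1}=\sigma_{n+1}'\circ\tilde\phi$. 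Since $\tilde\phi$ is isotopic to the identity relative to $\hat P$, $\sigma_{n+1}$ and $\sigma_{n+1}'$ represent the same point of $\mathcal{T}_g$, as desired.

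\emph{Main obstacle.} The delicate part is the second half. Producing $\tilde\phi$ uses the isotopy‑lifting theorem applied to the \emph{branched} cover $g$: one lifts on the unbranched complement and then checks that the extension across the critical fibers is still an isotopy, and that it remains relative to $g^{-1}(\hat P)$ — a book‑keeping argument that must be done with care. Equally important is excluding the deck‑transformation ambiguity in the final step, and this is precisely where the third normalization point $1$ is indispensable: since $\iota$ already fixes $0$ and $\infty$, without a marked point disjoint from $\{w_\alpha,w_\beta\}$ one could not distinguish $\mathrm{id}$ from $\iota$. A secondary point worth pinning down is that the essential‑mating hypotheses genuinely force $g(w_\alpha)\neq g(w_\beta)$ and $g(w_\alpha),g(w_\beta)\neq1$, so that $F_{u_n,v_n}$ is a bona fide degree‑two map at every stage.
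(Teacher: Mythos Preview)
Your proposal is correct and follows essentially the same route as the paper: both arguments produce $\sigma_{n+1}$ by lifting $\sigma_n\circ g$ through $F_{u_n,v_n}$ via the fundamental lifting theorem on the twice-punctured sphere, extend across the branch points, and then for the second half lift an isotopy rel the marked set through $g$. Your write-up is in fact more careful than the paper's on two points the paper glosses over: you explicitly identify the nontrivial deck transformation of $F$ as $z\mapsto -z$ and use the normalization at $1$ to kill it, and you enlarge the marked set to $\hat P=P_g\cup\{w_\alpha,w_\beta,1\}$ so that the isotopy genuinely respects the $\mathcal{H}$-normalization---the paper simply says ``isotopy relative to $P_g$'' and asserts the lift is an isotopy between $\sigma_{n+1}$ and $\sigma_{n+1}'$ without tracking the deck ambiguity.
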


\begin{theorem}\label{end}
Fixing a starting $\sigma_0\in\mathcal{H}$ and repeatedly applying Theorem \ref{convergencetheorem} generates a sequence of rational maps $F_{u_n,v_n}$ that is equivalent to those produced by Thurston's algorithm. This sequence converges to a rational map $F$ that is Thurston-equivalent to the topological mating of $f_\alpha$ and $f_\beta$.
\end{theorem}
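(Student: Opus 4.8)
The plan is to establish Theorem \ref{end} in two stages: first identifying the sequence $\{F_{u_n,v_n}\}$ produced by iterating Theorem \ref{convergencetheorem} with the sequence produced by Thurston's algorithm applied to $g$, and then invoking the convergence half of Thurston's characterization to conclude that this sequence converges to a rational map Thurston-equivalent to $g$, hence (via Tan Lei's construction) to the topological mating.

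\textbf{Step 1: Identifying the iteration with Thurston's algorithm.} Starting from $\sigma_0\in\mathcal{H}$, Theorem \ref{convergencetheorem} produces $\sigma_1$ making the square commute with $F_{u_0,v_0}$; iterating gives the commutative ladder of Figure \ref{commutative1} with $f$ replaced by $g$. I would first argue that the complex structures $\sigma_{n+1}^{*}(\text{std})$ obtained by pulling back the standard structure on $\hat{\mathbb{C}}$ along $\sigma_{n+1}$ are precisely the Thurston pullbacks $\Sigma_g^{\circ(n+1)}(\tau_0)$ of $\tau_0 = [\sigma_0^{*}(\text{std})]$. This is essentially a diagram-chase: because $F_{u_n,v_n}$ is holomorphic, $\sigma_n\circ g$ and $F_{u_n,v_n}\circ\sigma_{n+1}$ induce the same complex structure via pullback, and this is exactly the definition of $\Sigma_g$. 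The normalization $\sigma(w_\alpha)=0,\sigma(w_\beta)=\infty,\sigma(1)=1$ together with the last sentence of Theorem \ref{convergencetheorem} (well-definedness on $\mathcal{T}_g$) shows the construction descends to Teichmüller space and agrees with $\Sigma_g$; the role of Lemma \ref{rationalmaplemma} is to guarantee that each $\tau_n$ has a genuine rational representative $F_{u_n,v_n}$, so the sequence of maps is well-defined, normalized, and unique. This establishes the first assertion of the theorem.

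\textbf{Step 2: Convergence.} Here I would apply the Thurston machinery recalled after the Thurston--Douady--Hubbard proposition. The key point is that $g=f_\alpha\upmodels_e f_\beta$ is, by the essential mating construction of Tan Lei (Definition \ref{essential} and the discussion following Theorem \ref{LRS}), Thurston-equivalent to the topological mating of $f_\alpha$ and $f_\beta$; and since $f_\alpha,f_\beta$ are not in complex conjugate limbs, Theorem \ref{LRS} tells us the topological mating is Thurston-equivalent to a rational map $F$. Therefore $g$ itself is Thurston-equivalent to a rational map, so by the Thurston--Douady--Hubbard proposition $\Sigma_g$ has a fixed point; moreover $g$ is critically finite with hyperbolic orbifold (the quadratic-mating setting here is critically preperiodic, so the orbifold is hyperbolic and no Thurston obstruction arises), whence $\Sigma_g^{\circ 2}$ is strictly contracting and $\tau_n=\Sigma_g^{\circ n}(\tau_0)\to\tau_\infty$, the unique fixed point, for \emph{every} starting point $\tau_0$. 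Passing back to normalized representatives, the critical values $(u_n,v_n)\to(u_\infty,v_\infty)$ in $\hat{\mathbb{C}}$, and by continuity of $(u,v)\mapsto F_{u,v}$ on the relevant domain (Lemma \ref{rationalmaplemma}), $F_{u_n,v_n}\to F_{u_\infty,v_\infty}=:F$, which represents the fixed point and is therefore the rational map Thurston-equivalent to $g$, hence to the topological mating.

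\textbf{Main obstacle.} The delicate part is Step 1 --- verifying rigorously that the normalized pullback construction of Theorem \ref{convergencetheorem} is genuinely the Thurston pullback map and not merely a map that resembles it: one must check that the normalization on $\{0,1,\infty\}$ is consistent with the Teichmüller-space quotient (so that different normalized representatives of the same point of $\mathcal{T}_g$ yield the same $F_{u_n,v_n}$ and equivalent $\sigma_{n+1}$, which is exactly the content of the final sentence of Theorem \ref{convergencetheorem}), and that the marked points $w_\alpha,w_\beta,1$ are handled compatibly with $P_g$ throughout. A secondary subtlety is confirming the hyperbolic-orbifold hypothesis and the absence of a Thurston obstruction for $g$ --- but this follows because $g$ is Thurston-equivalent to the topological mating, which by Theorem \ref{LRS} is equivalent to an honest rational map, and rational maps have no obstructions. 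Once these points are in place, the convergence in Step 2 is a direct citation of the contraction argument in \cite{TOPCHARACTERIZATION}.
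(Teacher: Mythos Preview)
Your proposal is correct and follows essentially the same two-step architecture as the paper's proof. The only difference is one of framing in Step 1: you identify the iteration with Thurston's algorithm by checking that $\sigma_{n+1}^{*}(\text{std})$ represents $\Sigma_g(\tau_n)$ via the holomorphicity of $F_{u_n,v_n}$, whereas the paper argues the same point from the rational-map side, observing that the Thurston map $F_n=\sigma_n\circ g\circ\sigma_{n+1}^{-1}$ lies in $\mathcal{F}$ with $F_n(0)=u_n$, $F_n(\infty)=v_n$, $F_n(1)=1$, hence equals $F_{u_n,v_n}$ by Lemma~\ref{rationalmaplemma}, and then invokes the uniqueness clause of Theorem~\ref{convergencetheorem} to match the $\sigma_{n+1}$'s. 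These are dual readings of the same commutative square, and your Step 2 coincides with the paper's.
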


This collection of assertions is similar in nature to Lemma 3.7, Theorem 3.8, and Theorem 3.9 of \cite{MEDUSA}, but generalized as we are not working with elements of Medusa space. The proofs follow in a similar manner.

\begin{proof}[Proof of Theorem \ref{convergencetheorem}]

Although $g$, $\sigma_n$, and $F_{u_n,v_n}$ are maps on $\mathbb{S}^2$ and $\hat{\mathbb{C}}$, we consider the following diagram on doubly punctured spheres:

$$\begin{CD}
\mathbb{S}^2\setminus\Omega_f	@.	 				\hat{\mathbb{C}}\setminus \{0,\infty\}\\
@VVgV			 											@VVF_{u_n,v_n}V\\
\mathbb{S}^2\setminus f(\Omega_f) 		@>\sigma_n>> 			\hat{\mathbb{C}}\setminus \{u_n,v_n\}
\end{CD}
$$\\

The fundamental group of any doubly punctured sphere is $\mathbb{Z}$. More specifically, we may fix 1 as a base point and identify the fundamental group on the doubly punctured sphere $\mathbb{S}^2$ (respectively, $\hat{\mathbb{C}}$) with $\mathbb{Z}$ so that the winding number about $w_\alpha$ or $g(w_\alpha)$ (respectively, about 0 or $u_n$) corresponds to the element $+1\in\mathbb{Z}$. The maps $g$ and $F_{u_n,v_n}$ are degree 2 branched covers of the sphere, and so are two-to-one covering maps when we omit branch points and preimages as above. The induced maps on fundamental groups $g_*$ and $F_{u_n,v_n*}$ are then both equivalent to multiplication by 2.

$\sigma_{n}$ on the other hand is a homeomorphism, so the induced map $\sigma_{n*}$ is equivalent to the identity. We then have that $F_{u_n,v_n*}$ and $(\sigma_n\circ g)_*=\sigma_{n*}\circ g_*$ have the same image. By the fundamental lifting theorem, there is a lift $\sigma_{n+1}: \mathbb{S}^2\setminus\Omega_g\rightarrow \hat{\mathbb{C}}\setminus\{0,\infty\}$ that commutes with the diagram above. This map $\sigma_{n+1}$ is unique if we specify that $\sigma_n(1)=1$.  We may then continuously extend $\sigma_{n+1}$ to a map on spheres by assigning $\sigma_{n+1}(w_\alpha)=0$ and $\sigma_{n+1}(w_\beta)=\infty$ so that $\sigma_{n+1}\in\mathcal{H}$. This shows that the diagram given in the statement of Theorem \ref{convergencetheorem} commutes.

For the uniqueness portion of Theorem \ref{convergencetheorem}, we consider the following. If $\sigma_n$ and $\sigma_n'$ represent the same element in $\mathcal{H}$, there exists an isotopy relative to $P_g$ between these two maps. Since the isotopy does not disturb elements of the postcritical set, $u_n$ and $v_n$ are unchanged, thus $F_{u_n,v_n}$ and $F'_{u_n,v_n}$ are the same map. Further, our isotopy lifts to a new isotopy between $\sigma_{n+1}$ and $\sigma_{n+1}'$, and so these two lifts represent the same element in $\mathcal{T}_f$. 
\end{proof}

\begin{proof}[Proof of Theorem \ref{end}]

The reader may note that while Thurston's algorithm should use a pullback to define the rational map $F_n=\sigma_n\circ g\circ \sigma_{n+1}\ ^{-1}$, the above proof defines $\sigma_{n+1}$ as a lift, assuming that the analogous rational map $F_{u_n,v_n}$ is known. A useful consequence of working in normalized maps from $\mathcal{H}$ and $\mathcal{F}$ is that we can note both $F_n$ and $F_{u_n,v_n}$ always map $0\mapsto u_n$, $\infty\mapsto v_n$, and $1\mapsto 1$; which uniquely determines $F_n=F_{u_n,v_n}$ as a single member of $\mathcal{F}$. Thus, once we know $\sigma_{n-1}$ (and so the values of $u_n$ and $v_n$), we know $F_n$. Theorem \ref{convergencetheorem} guarantees that the lift map $\sigma_{n+1}$ is unique, and so the $\sigma_n$ in our algorithm and Thurston's algorithm coincide. We can then view the repeated application of Theorem \ref{convergencetheorem} as an algorithm generating the same sequence of embeddings $\sigma_n$ and rational maps $F_{u_n,v_n}$ as Thurston's algorithm. 

Since the essential mating $g$ is Thurston-equivalent to the topological mating of $f_\alpha$ and $f_\beta$, we conclude by Thurston's algorithm that the sequence of rational maps $F_{u_n,v_n}$ converges to a rational map Thurston-equivalent to $g$. 
\end{proof}

\subsection{Implementation of the pseudo-equator algorithm} 

While Theorem \ref{end} makes obtaining the rational map $F$ appear easy, implementation of the theorem as a computer algorithm involves some attention to detail. Our key result, an algorithm that obtains an approximation for the geometric mating of two polynomials, will be organized into a process that involves five major steps. We will call this the \emph{pseudo-equator algorithm}:

\begin{algorithm}\label{algorithmthing} Suppose that essential mating of two critically preperiodic polynomials has a hyperbolic orbifold, and that this mating permits construction of a finite subdivision rule and pseudo-equator curve. The \emph{pseudo-equator algorithm} refers to the following process for approximating the geometric mating for these two polynomials, as described below:

\begin{enumerate}
\item Build the finite subdivision rule for the essential mating, per Definition \ref{fsrdefn}.
\item Construct a pseudo-equator and embedding, per Theorem \ref{pseudothm}.
\item Assign an approximation for the rational map, per Lemma \ref{rationalmaplemma}.
\item Pull back the curve while noting locations of preimages of marked points, per Theorem \ref{convergencetheorem}.
\item Repeat the approximation and pullback steps to obtain a sequence of rational maps, per Theorem \ref{end}.
\end{enumerate}

\end{algorithm}

It should be recalled that Theorem \ref{end} guarantees not just the existence of some sequence of rational maps, but that this sequence converges to a desired rational map $F$ that can be taken as the geometric mating of our two polynomials. We expand upon each of these steps and their roles within the algorithm below:\\

\noindent \textbf{(1) Build the finite subdivision rule for the essential mating.}
	
	There are several finite subdivision rule constructions available to describe the action of matings on $\mathbb{S}^2$ so we can apply Thurston's algorithm. In the event that the essential and formal matings are equivalent, we may use formal mating constructions given in \cite{DISSERTATION}; otherwise, we use constructions detailed in Section \ref{yourfsrrules} and in  \cite{FSRCONSTRUCTION}. Since the Medusa algorithm applies in the former case , we focus our efforts on understanding the latter situation.
	
	When $f_\alpha$ and $f_\beta$ are not strongly mateable, the essential type finite subdivision rule construction involves identifying Hubbard trees at marked points specified by $\sim_e$. This yields a 1-skeleton that can be completed to a tiling  $S_\mathcal{R}$ on $\mathbb{S}^2$. Off of the marked points, the action of $g=f_\alpha\upmodels_e f_\beta$ on the 1-skeleton is similar to the action of $f_\alpha$ or $f_\beta$ on its associated Hubbard tree--that is, we have homeomorphic behavior off of the critical set. If we note expected behavior of marked points under the essential mating, we may develop a new 1-skeleton that can be completed to a subdivided tiling $\mathcal{R}(S_\mathcal{R})$, with the essential mating acting as a subdivision map.

\noindent\textbf{(2) Construct a pseudo-equator and embedding. }
	
	If we have a finite subdivision rule that was generated in the above manner, the easiest way to attempt to find a pseudo-equator is to construct a closed curve $C_0$ through $P_g$ that separates the two Hubbard trees in the tiling 1-skeleton off of $P_g$. If this is a Jordan curve, Theorem \ref{pseudothm} guarantees that $g$ has a pseudo-equator. We may then use the finite subdivision rule to determine the pseudo-equator curve's pullback $C_1$ under the essential mating, since $n$-tiles map homeomorphically to other $n$-tiles. This pullback will then have a pseudo-isotopy $H_1:\mathbb{S}^2\times[0,1]\rightarrow \mathbb{S}^2$ so that $H_1(\cdot,1)$ maps $C_0$ orientation preserving to $C_1$. 
	
	To embed $C_0$ in $\hat{\mathbb{C}}$, we may without loss of generality select the black polynomial $f_\alpha$ to fix an orientation of the curve: we will assume $C_0$ to be the positively oriented boundary curve around the component of $\mathbb{S}^2\setminus C_0$ containing the black critical point. Recall that the polynomials $f_\alpha$ and $f_\beta$ have postcritical points given by landing points of external rays $\gamma_\alpha(\alpha\cdot 2^{n-1})$ and $\gamma_\beta(\beta\cdot 2^{n-1}), n\in\mathbb{N}$. Further, considering the external angles associated with $f_\alpha$, we view the curve $C_0$ as a path $C_0:[0,1]\rightarrow\mathbb{S}^2$ possessing a natural parameterization $C_0(t)$. (We may do this, in fact, for all pullbacks of $C_0$ as well.) Motivated by this, we let $\sigma_0:\mathcal{S}^2\rightarrow\hat{\mathbb{C}}$ be the map such that $C_0(t)\mapsto e^{2\pi i t}$, and on this unit circle we will mark the points 1, $\{e^{2\pi i \alpha \cdot 2^{n-1}}\}$ and $\{e^{2\pi i (1-\beta) \cdot 2^{n-1}}\}$, $n\in \mathbb{N}$ to correspond to the fixed point and postcritical points of the essential mating. 
		
	We complete the definition of $\sigma_0$ and extend it to an orientation preserving complex structure from $\mathbb{S}^2\rightarrow\hat{\mathbb{C}}$ by defining $\sigma_0$ to be a homeomorphic extension sending $w_\alpha$ to 0, $w_\beta$ to $\infty$, and 1 to 1. Our intent is to select $\sigma_0$ as a normalized representative of some element of Teichmuller space. While a different homeomorphic extension would in general yield a different representative of the same element in $\mathcal{T}_g$---and while we could make a similar comment regarding the exact path and parameterization for $C_0$---this is a moot point by Theorem \ref{convergencetheorem}.  The remainder of the algorithm only deals in computations regarding pullbacks and embeddings of $C_0$---and only as far as determining  the embedding of $P_g$ in $\hat{\mathbb{C}}$, the order in which the marked points of $P_g$ and their embeddings are connected, and the general homotopy type of the connecting curves.
	
	The curve $C_0$ has an clear relationship to the Medusa described in Section \ref{thurstonmedusa}, and thus the above choice of $\sigma_0$ is intuitive. The Medusa resembles an equator on $\mathbb{S}^2$, with external ray limbs reaching toward the postcritical set. In both the Medusa setting and here, these equator-like curves are initially embedded in $\hat{\mathbb{C}}$ as the unit circle. A key difference is the following: if two points identify under $\sim_e$, these points are distinct on the Medusa, and there is an external ray pair joined at the equator that connects them both. In the essential mating, this pairing of external rays has been collapsed into the single marked point it intersects on the equator.  One could say in this manner that our combinatorial model resembles a ``headband" of sorts for the Medusa model: there is a clear deformation retract from the embedding of the Medusa to the circle $\sigma_0(C_0)$.

\noindent\textbf{(3) Assign an approximation for the rational map.}

	Since postcritical points of the map $g:\mathbb{S}^2\rightarrow\mathbb{S}^2$ are marked points on $C_0$, critical values of $g$ are explicitly embedded by $\sigma_{n-1}$ as some $u_n,v_n\in\hat{\mathbb{C}}$. We may then use the embedding of these critical values to determine the map $F_n=F_{u_n,v_n}$ as in Lemma \ref{rationalmaplemma}.
		
\noindent\textbf{(4) Pull back the curve, noting locations of preimages of marked points.}
	
	Since a finite subdivision rule has been determined, $C_{n}$ and $C_{n+1}$ are ascertained by noting homeomorphic behavior on tiles, much as in step 2.  Both of these curves contain the postcritical set, since $C_{n}\supseteq P_g$ implies $C_{n+1}=g^{-1}(C_{n})\supseteq g^{-1} (P_g)\supseteq P_g$---but $C_{n+1}$ will not be a Jordan curve since it contains $g^{-1}(P_g)$, and thus the critical points of the function $g$. Since there exists a pseudo isotopy $H_n:\mathbb{S}^2\times[0,1]\rightarrow\mathbb{S}^2$ such that $H_n(\cdot,1)$ maps $C_{n}$ orientation preserving to $C_{n+1}$, we do have that $H_n$ gives a canonical manner in which to traverse $C_{n+1}$ so that we visit the points of $P_g$ in the same order as $C_{n}$. 
	
	$\sigma_{n+1}$ and $\sigma_{n}$ are both orientation preserving isomorphisms, so $H_n$ may be lifted to a pseudo-isotopy on the Riemann sphere. Thus, we may expect $\sigma_{n}(C_{n})$ and $\sigma_{n+1}(C_{n+1})$ to visit embedded points corresponding to elements of $P_g$ in an intuitively similar order as well. At this point, we establish which marked points are `necessary': we primarily care about the embedding of $P_g$, not $g^{-1}(P_g)$. We can determine the cyclic order of `important' versus `unimportant' (i.e. in $P_g$ versus not in $P_g$) marked points on $C_{n+1}$, and note that $\sigma_{n+1}$ will preserve this ordering---telling us where to embed elements of $P_g$. (We touch on further subtle nuances of this process in Section \ref{implementation}.)

\noindent\textbf{(5) Repeat the approximation and pullback steps.}
	
	The $F_n$'s give a sequence of approximations to a rational map that is Thurston-equivalent to the mating $g$.

\begin{example}\label{example1} We examine the example detailed by Milnor in \cite{MILNORMATINGS}, the self-mating of $f_{1/4}$. The astute reader will note that this map actually has a parabolic $\{2,2,2,2\}$ orbifold rather than a hyperbolic orbifold, and thus Thurston's algorithm does not actually apply---but this mating provides a simplified introduction to the algorithm, and an interesting outcome nonetheless.

\noindent\textbf{Build finite subdivision rule}: The Hubbard tree for $f_{1/4}$ appears on the left of Figure \ref{f14selfi} with the postcritical set marked. Postcritical points identify under $\sim_e$ by a $\theta$ and $1-\theta$ external angle relation, so for the self-mating we obtain the subdivision complex $S_\mathcal{R}$ shown on the right of Figure \ref{f14selfi}. The preimage of this structure under the essential mating appears as on the right of Figure \ref{f14selfii}. The tiling $S_\mathcal{R}$, subdivided tiling $\mathcal{R}(S_\mathcal{R})$, and essential mating $g=f_{1/4}\upmodels_ef_{1/4}$ form an essential type finite subdivision rule. 
	
\noindent\textbf{Construct pseudo-equator}: The desired pseudo-equator curve $C_0$ and associated pullback curve $C_1$ are shown in Figure \ref{meyerex}. If we positively orient $C_0$ with respect to the black polynomial, we may note that it passes through the marked postcritical points

\begin{center}$\{ p_1=C_0(\frac{1}{4}), p_2=C_0(\frac{1}{2}), p_3=C_0(\frac{3}{4}),p_4=C_0(0)\}$\end{center}

\noindent in the listed order. We select an embedding of $C_0$ into $\hat{\mathbb{C}}$ that sends $C_0$ to the unit circle via the mapping $C_0(t)\mapsto e^{2\pi i t}$. The above list of marked postcritical points then maps respectively to $\{ i, -1, -i,1\}$. 

\noindent\textbf{Assign rational map}: Recall that the critical values of $g$ are $p_1$ and $p_3$. We may then set $u_0=\sigma_0(p_1)$ and $v_0=\sigma_0(p_3)$. Here, since $u_0=i$ and $v_0=-i$, we may utilize Lemma \ref{rationalmaplemma} to obtain that $F_0(z)=F_{u_0,v_0}=  \displaystyle\frac{(i+1)z^2+(i-1)}{(i-1)z^2+(i+1)}$.

\noindent\textbf{Pullback}: In this mating, the ramification portrait is $p_1, p_3 \mapsto p_2 \mapsto p_4 \mapsto p_4$. The pullback of $C_0$ by $g$ traverses marked points in the following ordering: 

\begin{center}$\{C_1(\frac{1}{8}),p_1=C_1(\frac{1}{4}),C_1(\frac{3}{8}),p_2=C_1(\frac{1}{2}),C_1(\frac{5}{8}),p_3=C_1(\frac{3}{4}),C_1(\frac{7}{8}),p_4=C_1(0)\},$\end{center}

\noindent where $C_1(\frac{1}{8})=C_1(\frac{5}{8})$ and $C_1(\frac{3}{8})=C_1(\frac{7}{8})$ are the critical points of the mating $g$. (We should note that the curve $C_1$ forks right whenever it approaches the black critical point $C_1(\frac{1}{8})=C_1(\frac{5}{8})$, and left whenever it approaches the red critical point $C_1(\frac{3}{8})=C_1(\frac{7}{8})$.) Pulling back the curve $\sigma_0(C_0)$ by $F_0$ yields a curve which traverses marked points in the ordering $\{0,i,\infty,-1,0,-i,\infty,1\}$. These lists of marked points on $C_1$ and the pullback of $\sigma_0(C_0)$ induce a `new' embedding of $P_g$, which we will denote $\sigma_1$. 

\noindent\textbf{Repeat}: For this step we will assign a new rational map and repeat the pullback step. We assign the new rational map by examining the parameters $u_1=\sigma_1(p_1)$ and $v_1(\sigma_1(p_3)$, but here it so happens that $\sigma_0$ and $\sigma_1$ agree on $P_g$. (This does not typically happen in usual examples---generally, we would expect $\sigma_1$ to assign new image elements to $P_g$.) Since the critical values for $g$ are assigned to the same respective elements of $\hat{\mathbb{C}}$, $F_1=F_0$.

Since $\sigma_0$ and $\sigma_1$ are the same map, we can infer that the pullback process does not change the embedding of $P_g$. This means that $\sigma_n$ and thus $F_n$ will both be constant sequences. This means that we must have started with a representative for the fixed point of $\Sigma_g$ in Teichmuller space, and so $F(z)= \frac{(i+1)z^2+(i-1)}{(i-1)z^2+(i+1)}$ is the rational map that is Thurston-equivalent to $g$. 

\begin{figure}\label{iterates}
\includegraphics[width=1.5in]{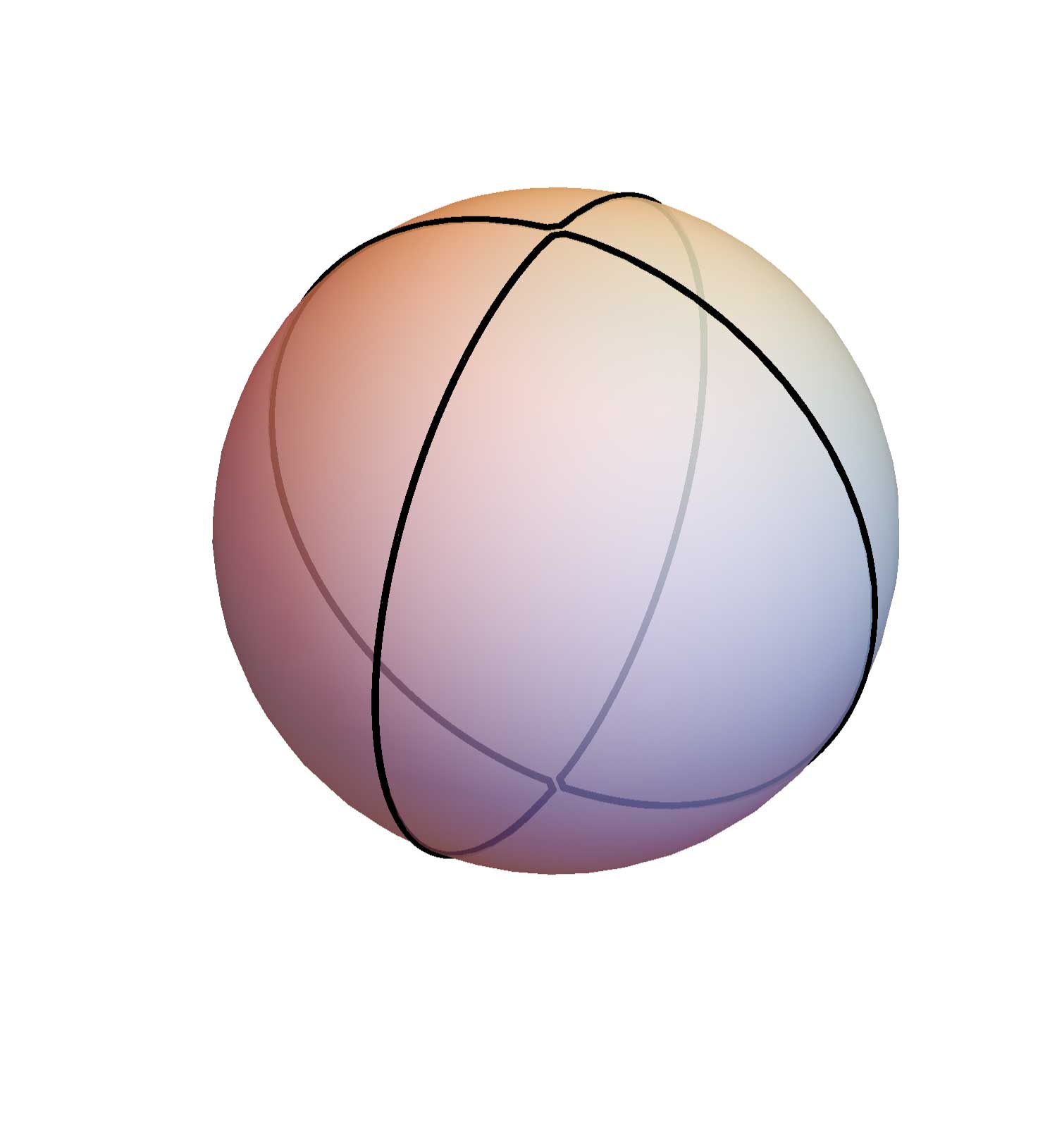}\includegraphics[width=1.5in]{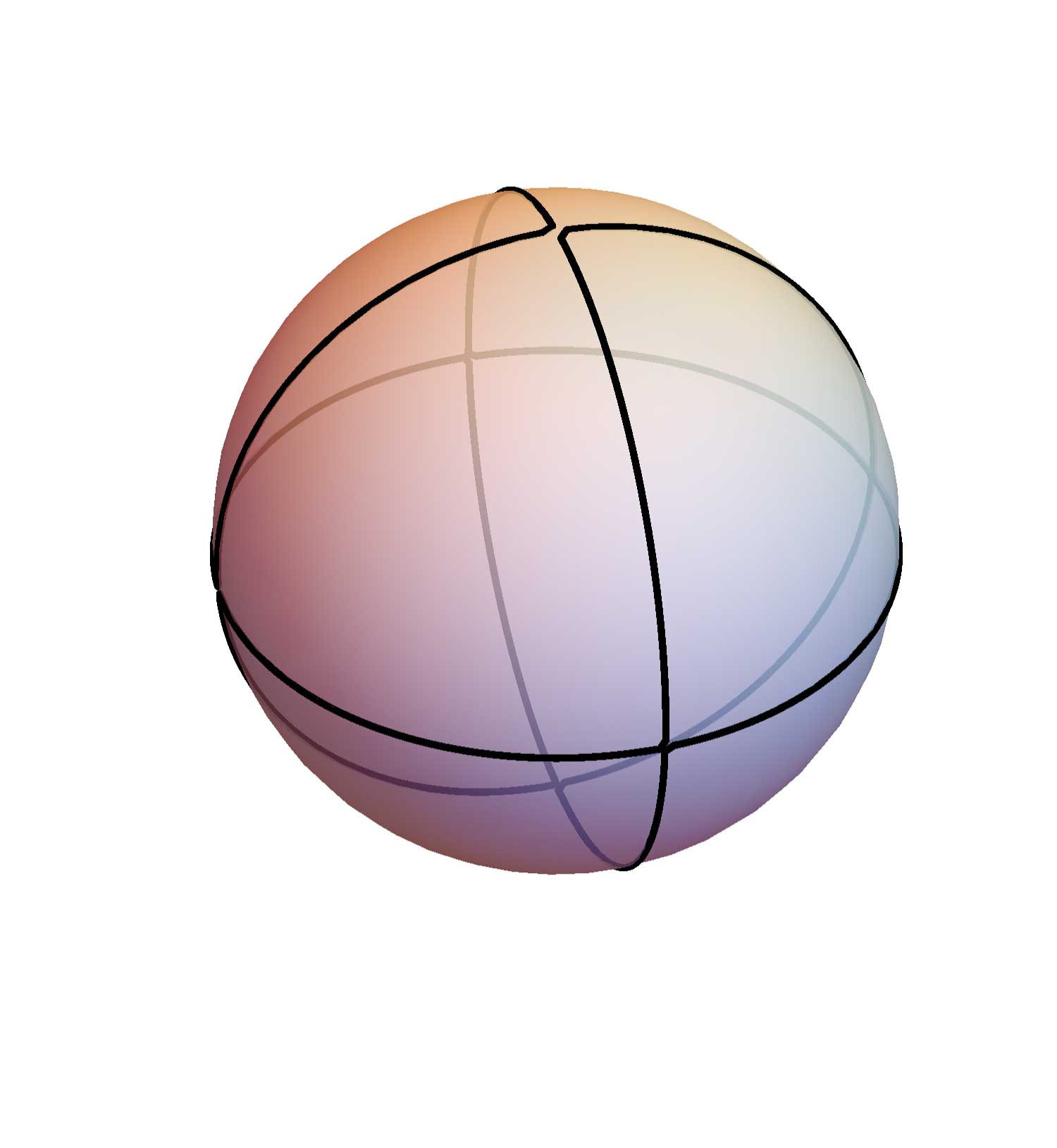}\includegraphics[width=1.5in]{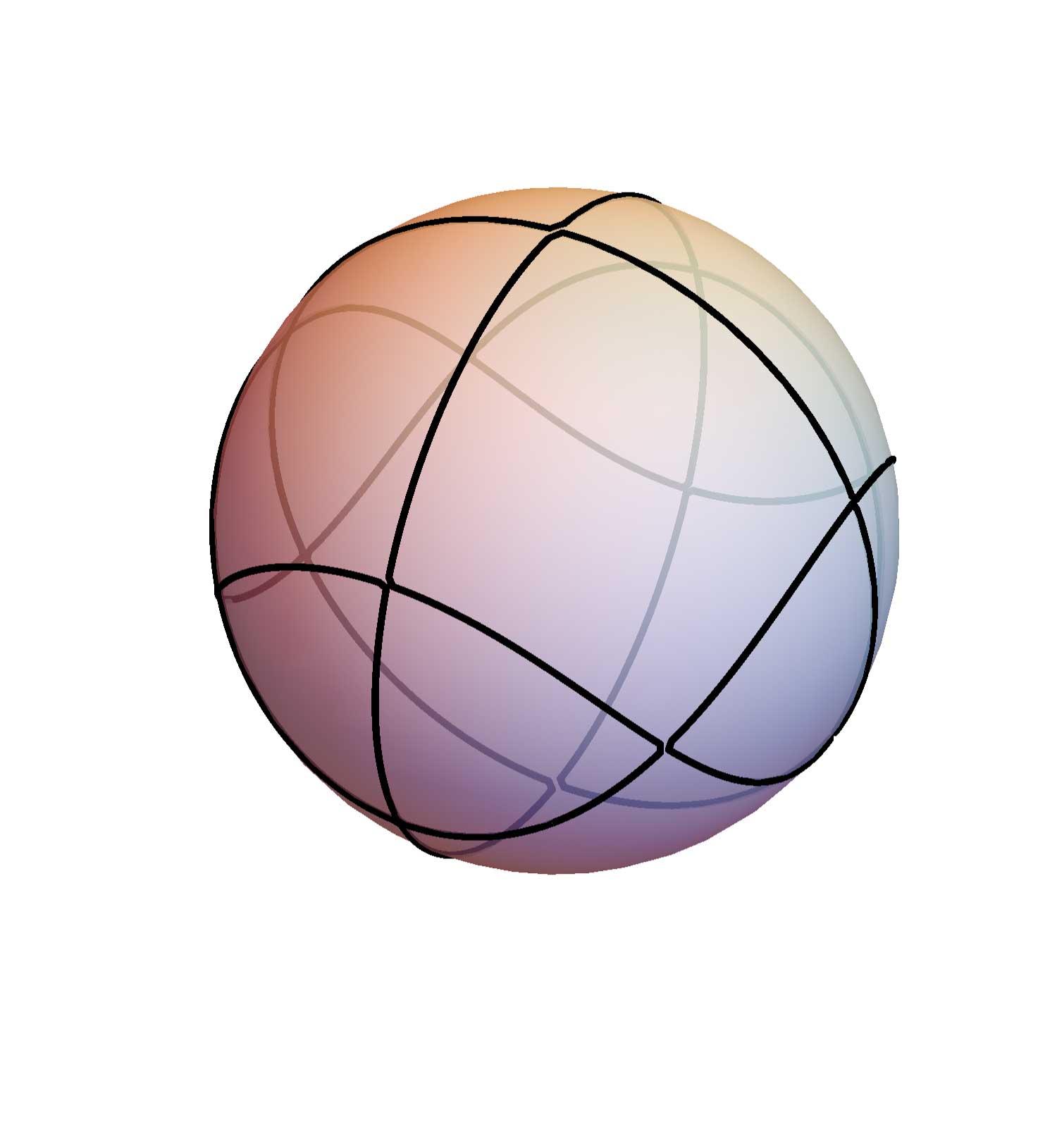}\includegraphics[width=1.5in]{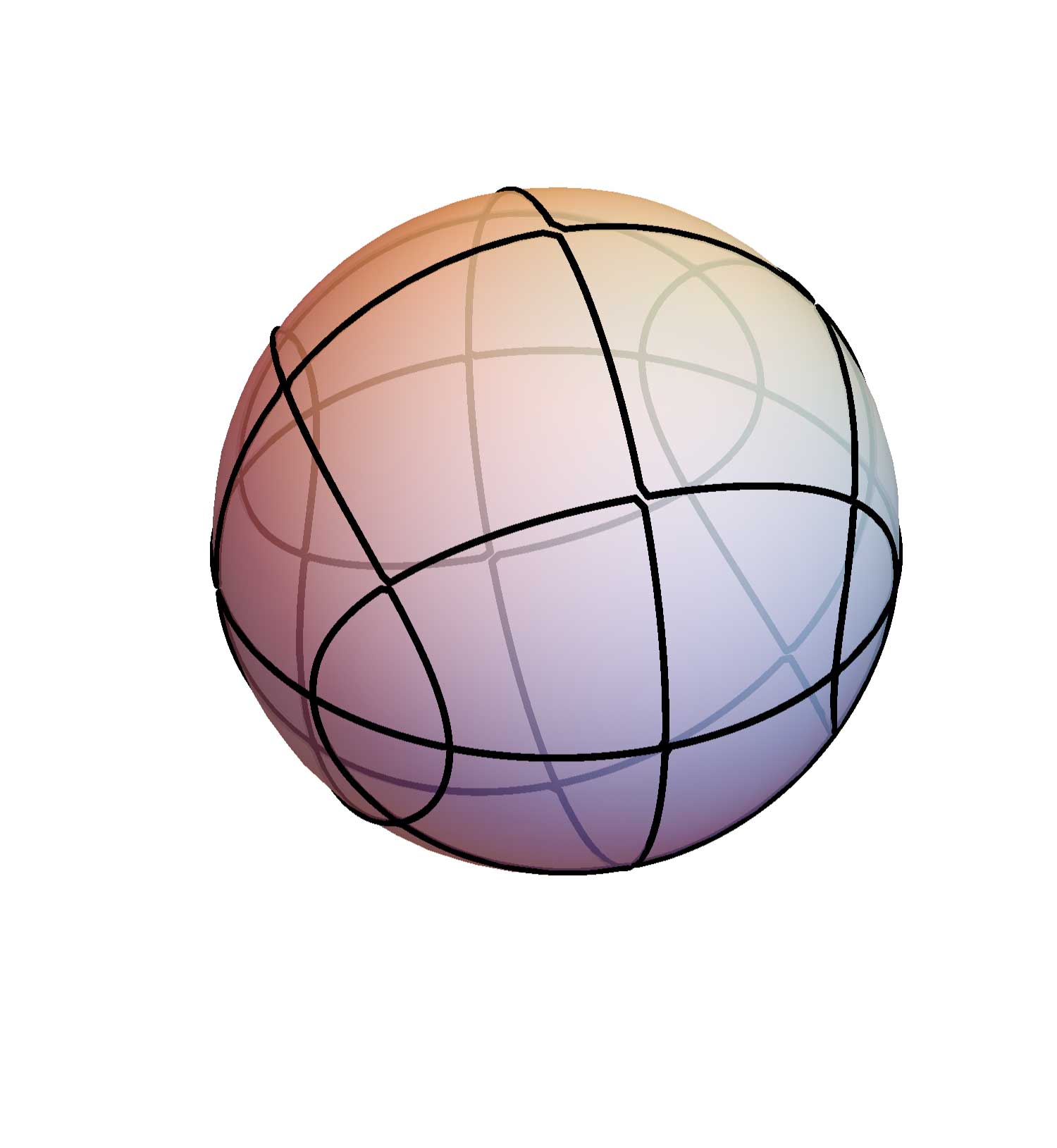}

\includegraphics[width=1.5in]{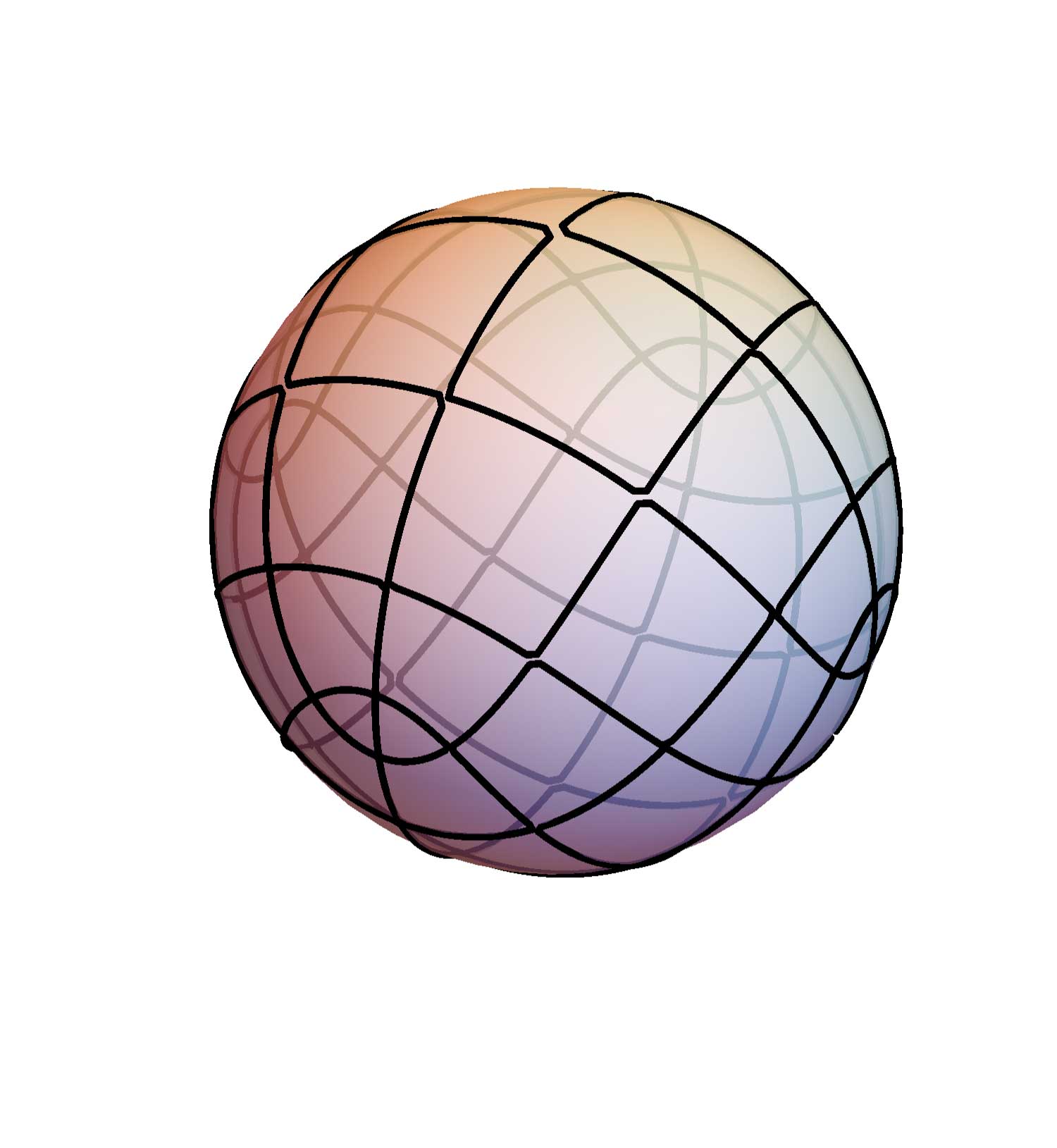}\includegraphics[width=1.5in]{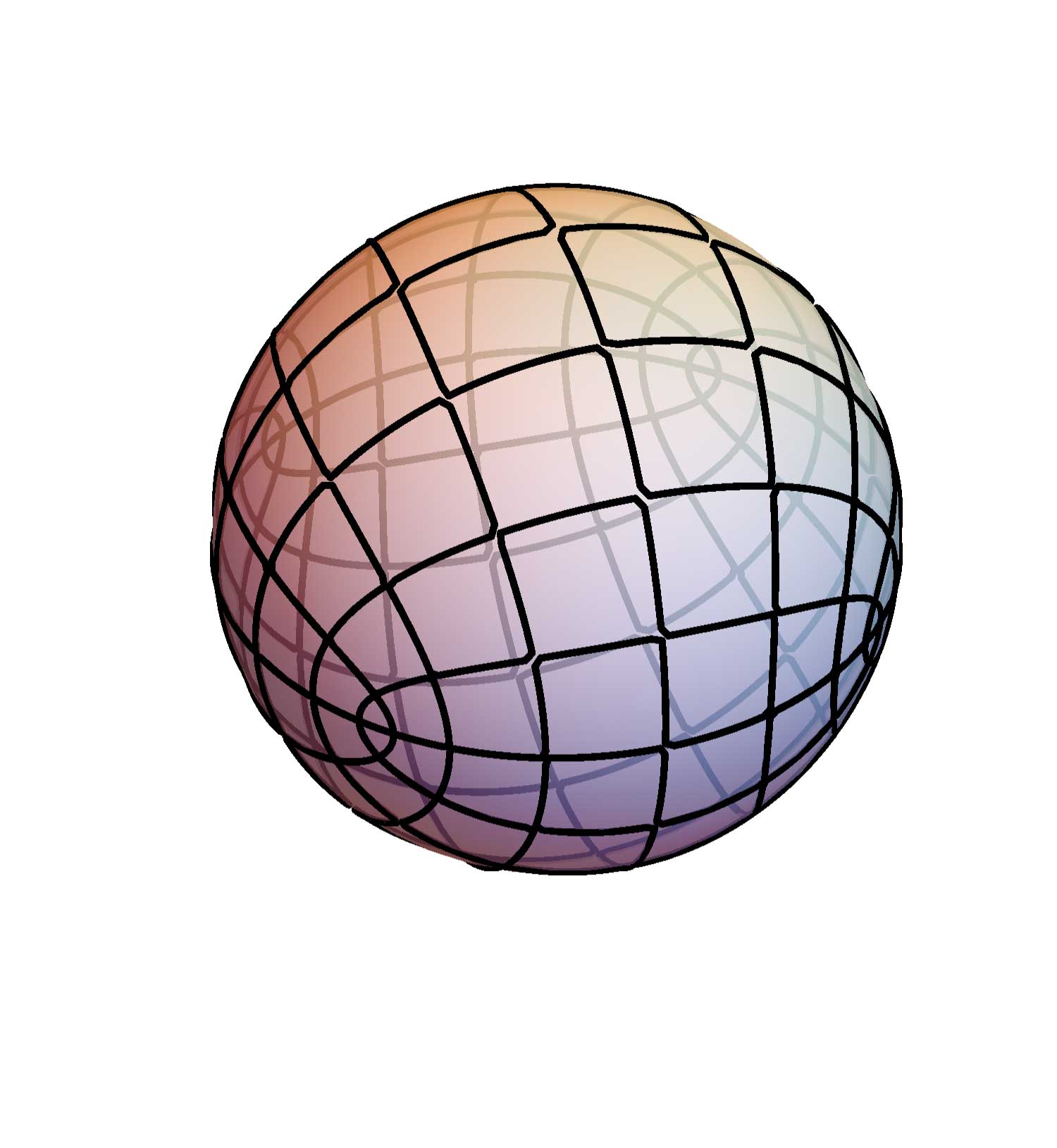}\includegraphics[width=1.5in]{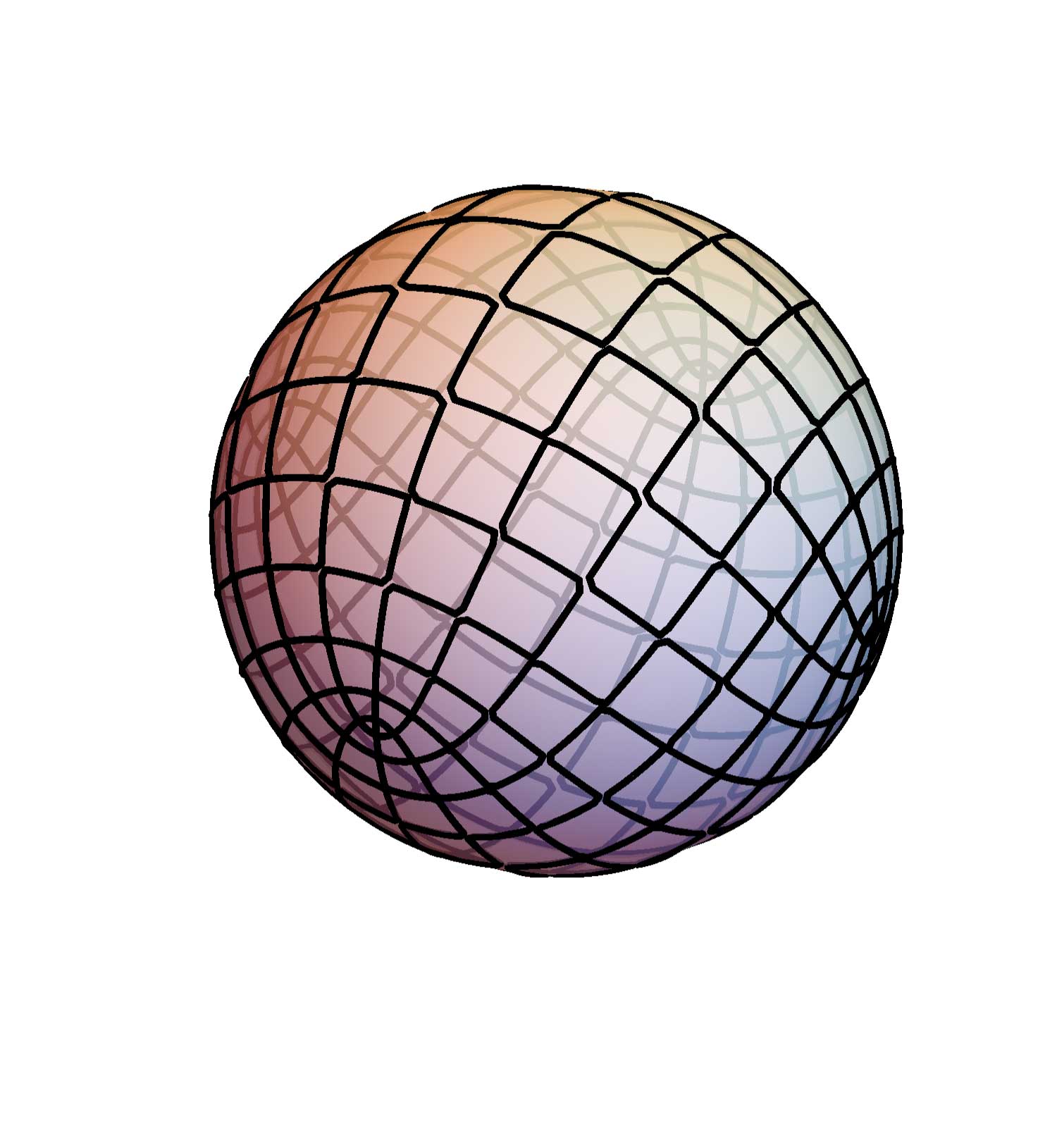}\includegraphics[width=1.5in]{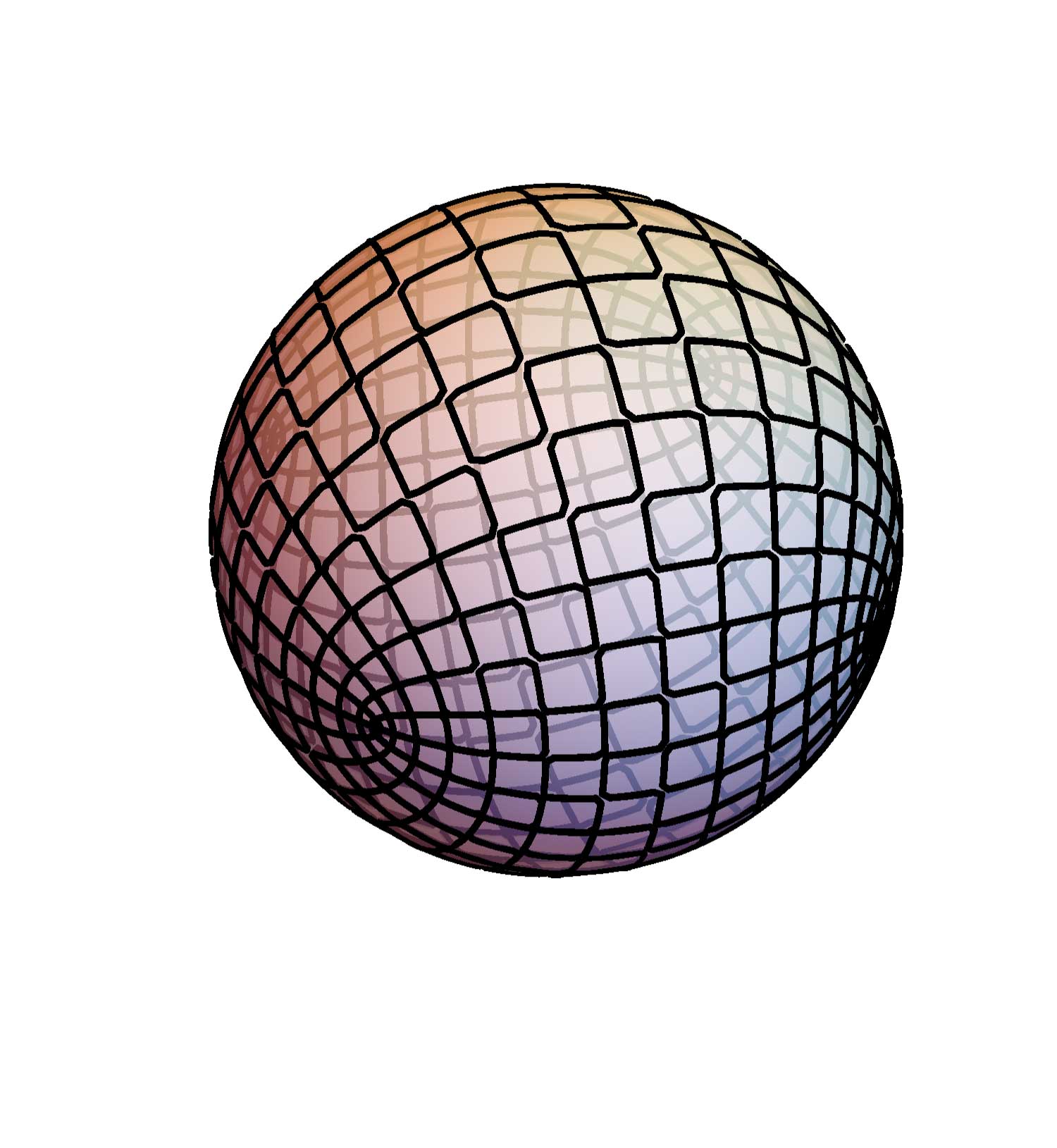}

\includegraphics[width=1.5in]{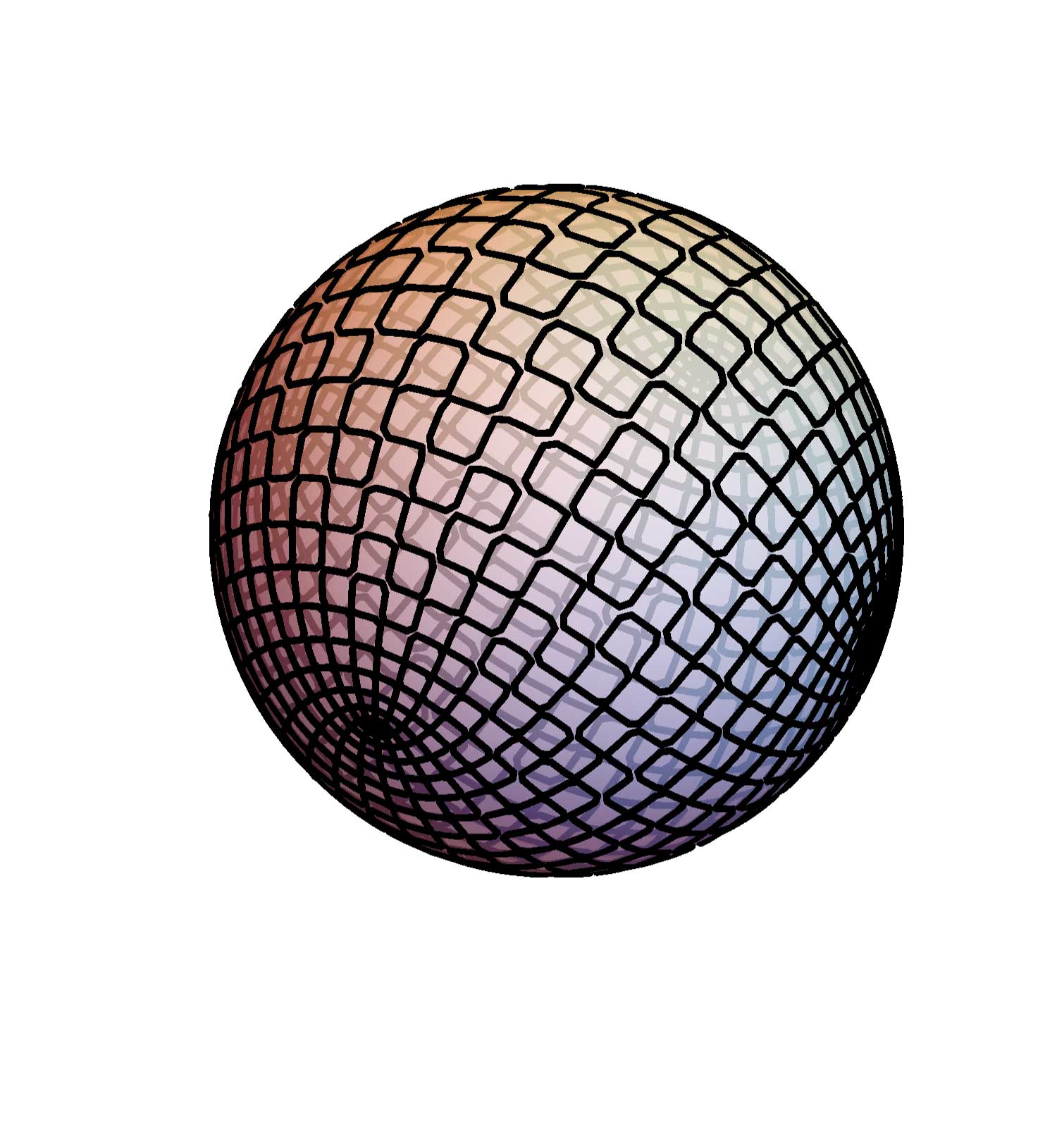}\includegraphics[width=1.5in]{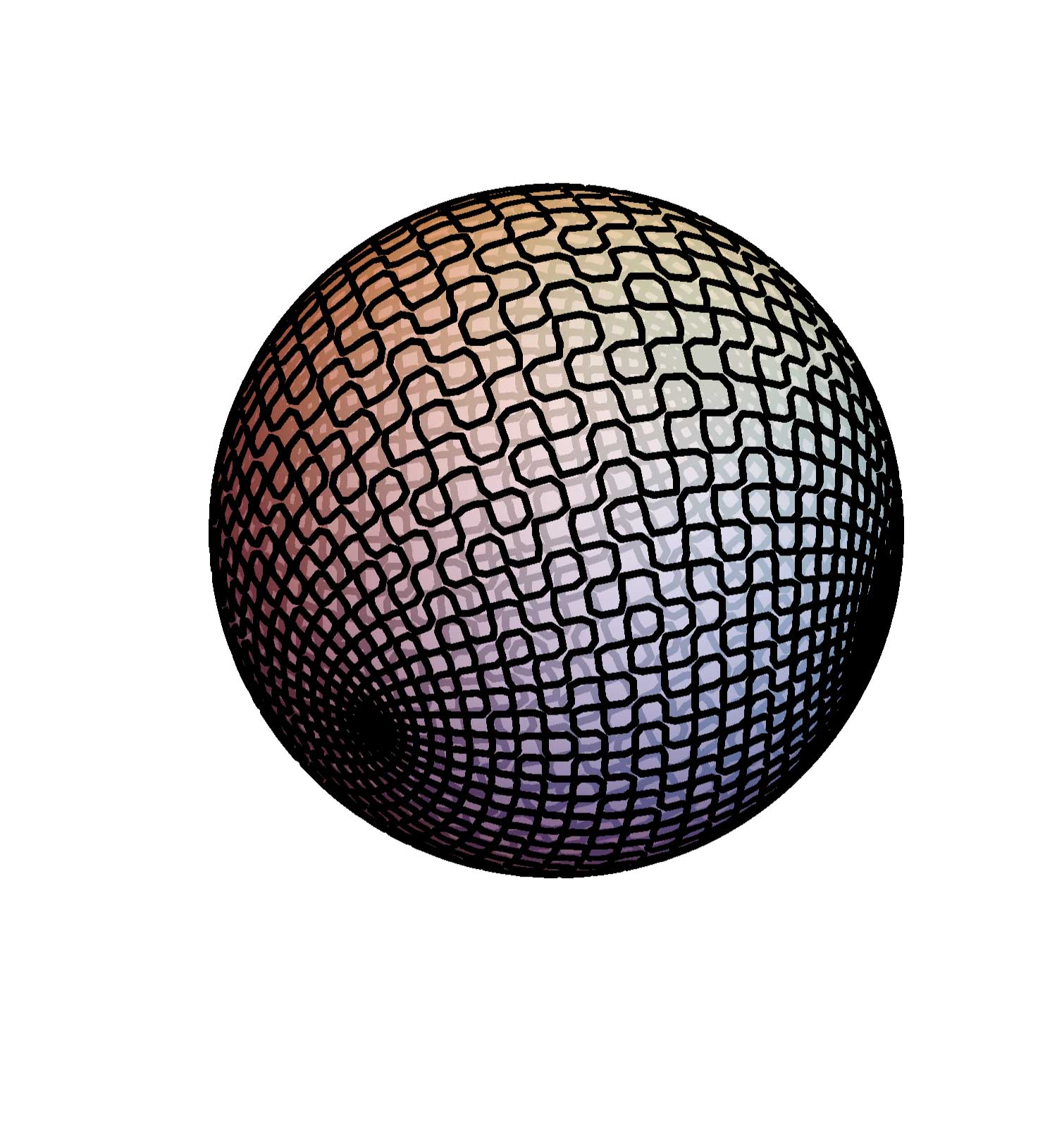}\includegraphics[width=1.5in]{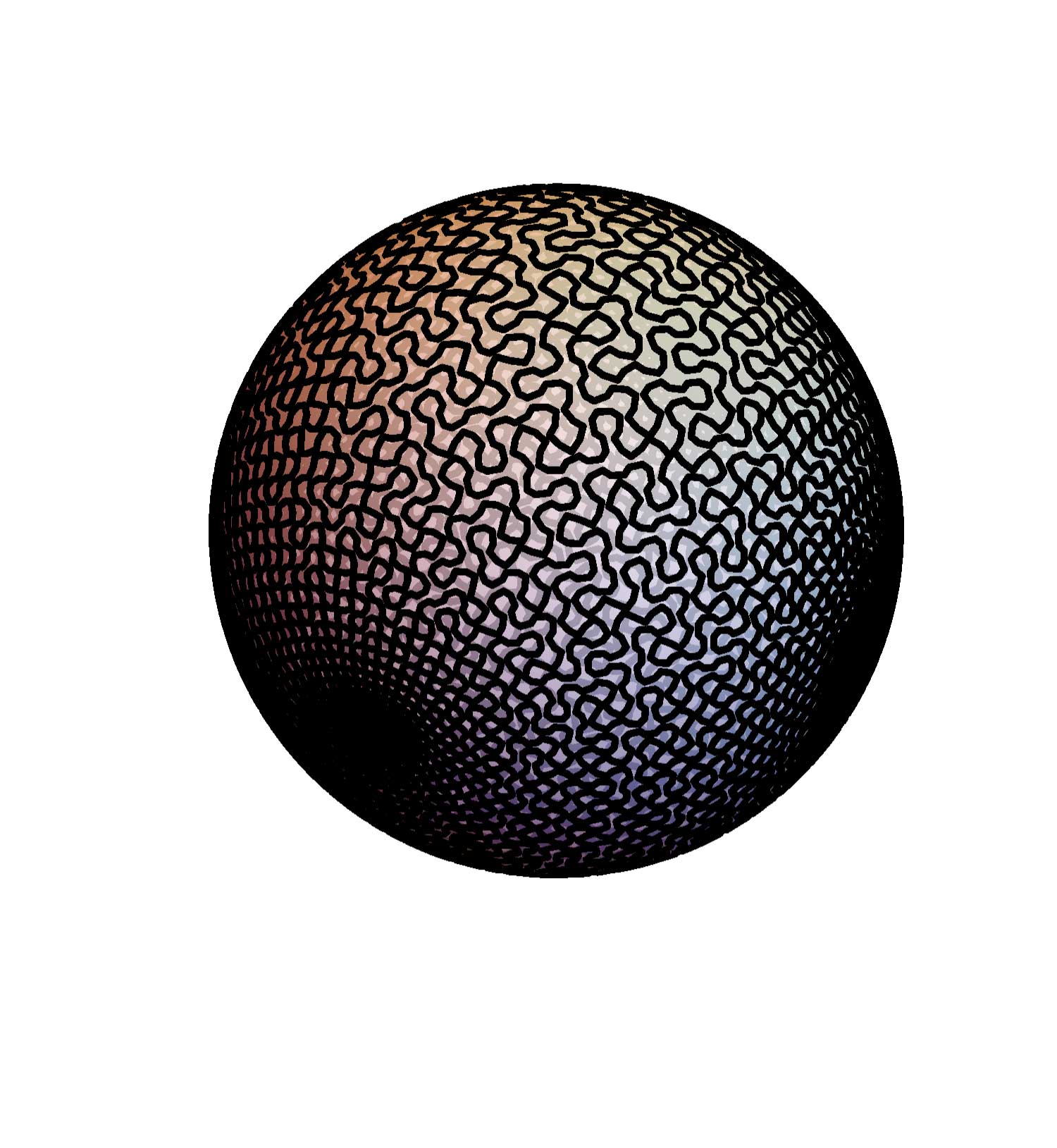}\includegraphics[width=1.5in]{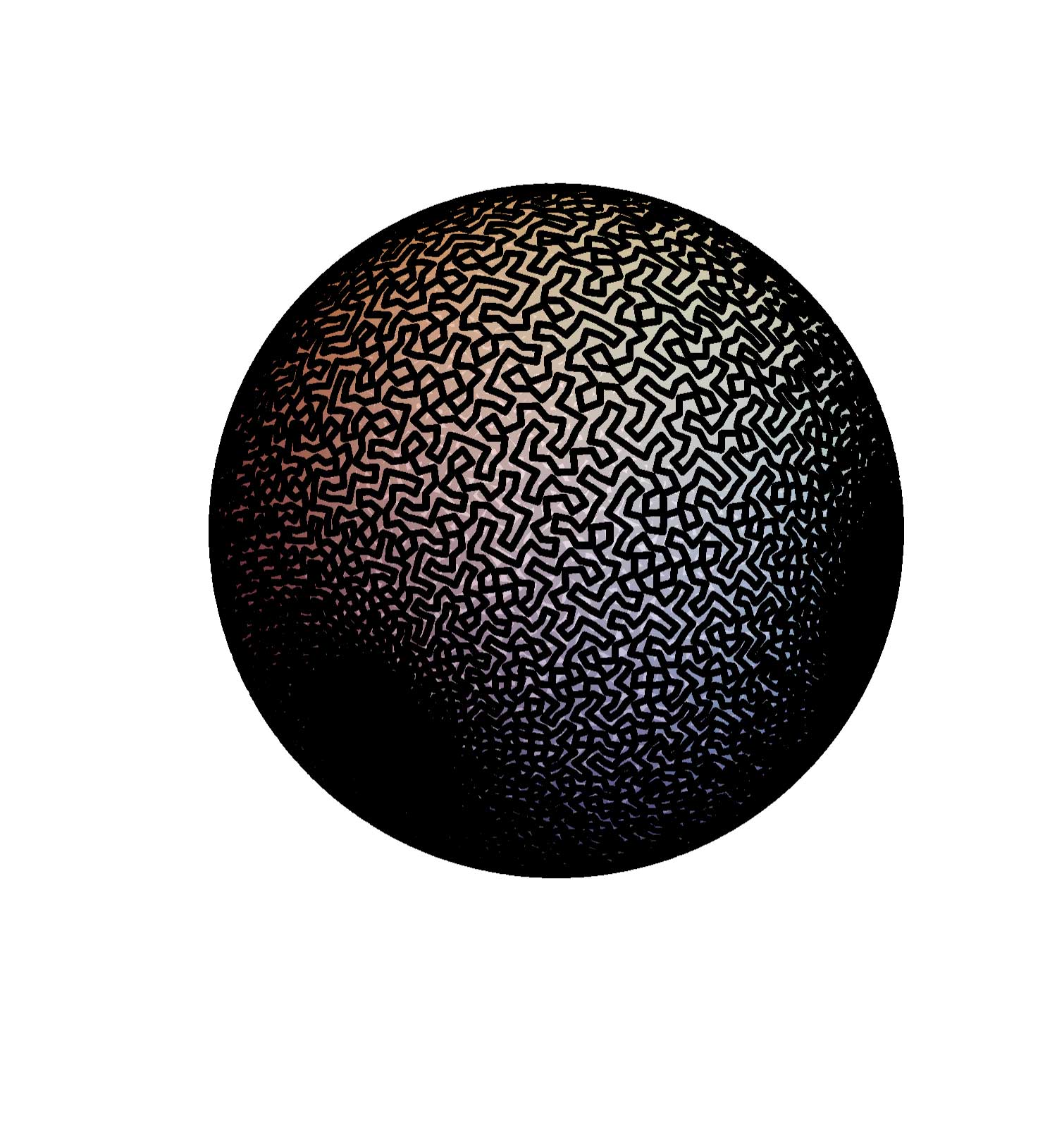}

\caption{Pullbacks of the equator by a rational map that is Thurston-equivalent to the topological self-mating of $f_{1/4}$. These pullbacks approximate the Julia set of the rational map, $\hat{\mathbb{C}}$.}
\end{figure}

The spheres in Figure \ref{iterates} demonstrate the iterated pullbacks of $\sigma_0(C_0)$ by $F$ under this process. The critical points $0$ and $\infty$ are located at the north and south poles of each of these spheres, which are depicted as slightly translucent so we may view the paths of the curves on the far side. To provide further orientation on each of the spheres, 1 is to the right, $-1$ is to the left, $i$ is to the rear right, and $-i$ is to the front left. It should be noted that the square grid formation occurring at the two critical points maps $2-1$ onto the pair of adjacent `triangular' tiles centered at $i$ and $-i$ from the previous sphere. (We say `triangular', since there is technically a marked point at $\pm i$ in the middle segment which makes these formations composed of two adjacent topological quadrilaterals.) These `triangular' tile pairs typically make the embeddings of critical values of $g$ visually appear to be a source of dark spots, or curve bunching in later iterations.  Thus, we have a loose way to visually reaffirm that the rational maps $F_n$ generated a constant sequence: the dark spots on the spheres do not change location from one iteration to the next, so the parameters $u_n$ and $v_n$ do not change, and we thus have a constant sequence for $F_n$.

\end{example}

It should be noted that generating an immediate fixed point of $\Sigma_g$ as above is actually a great stroke of luck. In general, the algorithm does not converge with other parabolic examples (such as the self-mating of $f_{1/6}$), and further it is atypical to obtain constant sequences for $\sigma_n$ when examining matings with hyperbolic orbifold. In general, the algorithm is intended to be applied to maps with five or more postcritical points (guaranteeing hyperbolic orbifold), and in such a case $F_n$ will converge non-trivially to the desired rational map. We will now demonstrate a more typical example.

\begin{example}\label{labelforexample} We examine the matings of $f_{1/4}$ and $f_{1/8}$. The essential mating has five postcritical points, thus a hyperbolic orbifold. This means that Thurston's algorithm applies to the example, and we may attempt to use Algorithm \ref{algorithmthing} to find a rational map approximation to the geometric mating of this pair.

\noindent\textbf{Build finite subdivision rule}: In Figure \ref{endexample}, we have constructed a finite subdivision rule for this mating using the construction detailed in Section \ref{yourfsrrules}. The 1-skeleton of the subdivision complex $S_\mathcal{R}$ is in black and red on the left, and the 1-skeleton of the subdivided complex $\mathcal{R}(S_\mathcal{R})$ is similarly shown in black and red on the right. With the tiling $S_\mathcal{R}$, the subdivision $\mathcal{R}(S_\mathcal{R})$, and the subdivision map $g=f_{1/4}\upmodels_ef_{1/8}$, we have a finite subdivision rule. A critical portrait for this subdivision map is shown on the left of Figure \ref{endexample}.

\begin{figure}\label{endexample}
\includegraphics{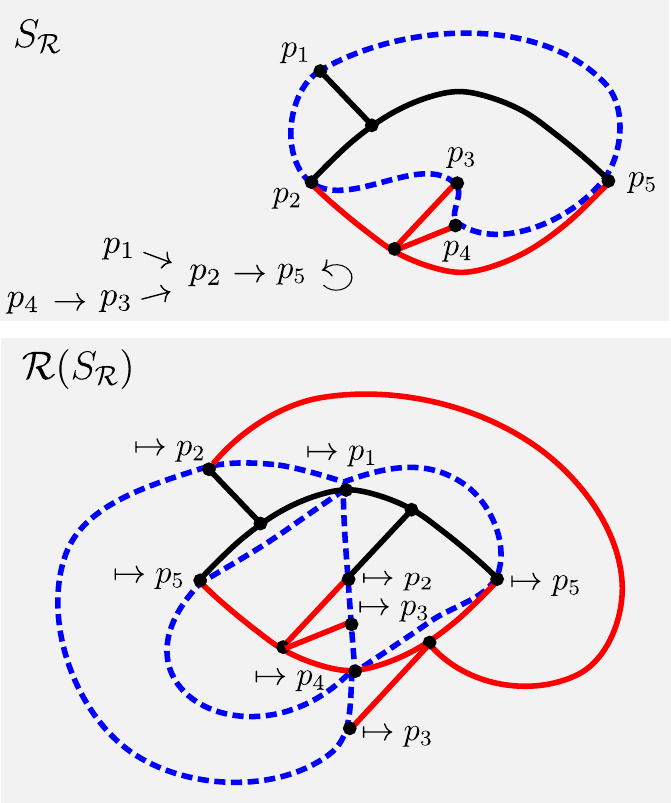}
\caption{The critical orbit portrait and finite subdivision rule associated with $f_{1/4}\upmodels_ef_{1/8}$, along with marked pseudo-equator curves. $C_0$ is marked in blue above and its pullback $C_1$ is marked in blue below. Here, $f_{1/4}$ is taken to be the black polynomial.  }
\end{figure}

\noindent\textbf{Construct pseudo-equator}: The desired pseudo-equator curve $C_0$ is constructed by separating the black $T_{1/4}$ tree from the red $T_{1/8}$ tree with a Jordan curve through the postcritical set of $g$. This curve $C_0$ is depicted in dashed blue lines on the left of Figure \ref{endexample}. Since the subdivision map $g$ is locally homeomorphic on open tiles, we infer from the finite subdivision rule that its pullback of $C_0$, the curve $C_1$, appears as the dashed blue curve on the right of Figure \ref{endexample}. 

We will orient both $C_0$ and $C_1$ positively with respect to the black polynomial. We will further assume the existence of canonical parameterizations $C_0, C_1:[0,1]\rightarrow\mathbb{S}^2$ respecting this orientation, where location on the curve is taken to be a function of the angle of external ray. This sets $C_0$ as passing through the following marked points:

\begin{center}$\{p_1=C_0(\frac{1}{4}), p_2=C_0(\frac{1}{2}), p_3=C_0(\frac{3}{4}), p_4=C_0(\frac{7}{8}), p_5=C_0(0)  \}.$\end{center}

We embed $C_0$ in $\hat{\mathbb{C}}$ as the unit circle via $\sigma_0$, which is given by the mapping $C_0(t)\mapsto e^{2\pi i t}$ for $t\in [0,1]$. Thus, the above listed marked points of $P_g$ now map respectively to the points

\begin{center}$\{ i, -1, -i, \frac{\sqrt{2}-\sqrt{2}i}{2}, 0  \}$\end{center}.

\noindent\textbf{Assign rational map}: Since $p_1$ and $p_4$ are the critical values of $g$, we will set $u_0=\sigma_0(p_1)$, and $v_0=\sigma_0(p_4)$ so that $u_0=i$ and $v_0= \frac{\sqrt{2}-\sqrt{2}i}{2}$. We may then use Lemma \ref{rationalmaplemma} to obtain the first rational map approximation $F_0=F_{u_0,v_0}$.

\noindent\textbf{Pullback}: The pullback of $C_0$ by $g$ traverses marked points in the following ordering: 

\begin{center}$\{C_1(\frac{1}{8}),p_1=C_1(\frac{1}{4}),C_1(\frac{3}{8}),C_1(\frac{7}{16}), p_2=C_1(\frac{1}{2}),C_1(\frac{5}{8}),p_3=C_1(\frac{3}{4}),p_4=C_1(\frac{7}{8}),C_1(\frac{15}{16}), p_5=C_1(0)\},$\end{center} 

\noindent where $C_1(\frac{1}{8})=C_1(\frac{5}{8})$ and $C_1(\frac{7}{16})=C_1(\frac{15}{16})$ are respectively the black and red critical points of the mating $g$. (We should note that similar to the last example, the orientation of $C_1$ is such that the curve forks right whenever it approaches the black critical point and left whenever it approaches the red critical point.) Decimal approximations for the marked points traversed by the pullback  curve $F_0^{-1}\circ \sigma_0(C_0)$ by $F_0$ are as follows:

\begin{center} $\{0,.643594i,1.18921i, \infty,-1,0,-.643594i,-1.18921i,\infty,1\}$.\end{center}

These lists of marked points on $C_1$ and the pullback of $\sigma_0(C_0)$ induce a new embedding of $P_g$, which we will denote $\sigma_1$.

\noindent\textbf{Repeat}: For this step we iteratively assign a new rational map and repeat the pullback step. The map $\sigma_1$ embeds the elements of $P_g$ as follows: $$p_1\mapsto .643594i, p_2\mapsto -1, p_3 \mapsto -.643594i, p_4\mapsto -1.18921i, p_5\mapsto 1 $$. We assign the new rational map by noting the new parameters $u_1=\sigma_1(p_1)=.643594i$ and $v_1(\sigma_1(p_4)=-1.18921i$, and applying Lemma \ref{rationalmaplemma} to obtain the approximation $F_1=F_{u_1, v_1}$.

Unlike the previous example, we will have a nontrivial sequence of rational map approximations, since $\sigma_0$ is not a representative for the fixed point of $\Sigma_g$. Continuing to iterate the pullback process from $C_1$ on generates the collection of curves $C_n$ as depicted in Figure \ref{iterates}.

\begin{figure}\label{aiterates}
\includegraphics[width=1.5in]{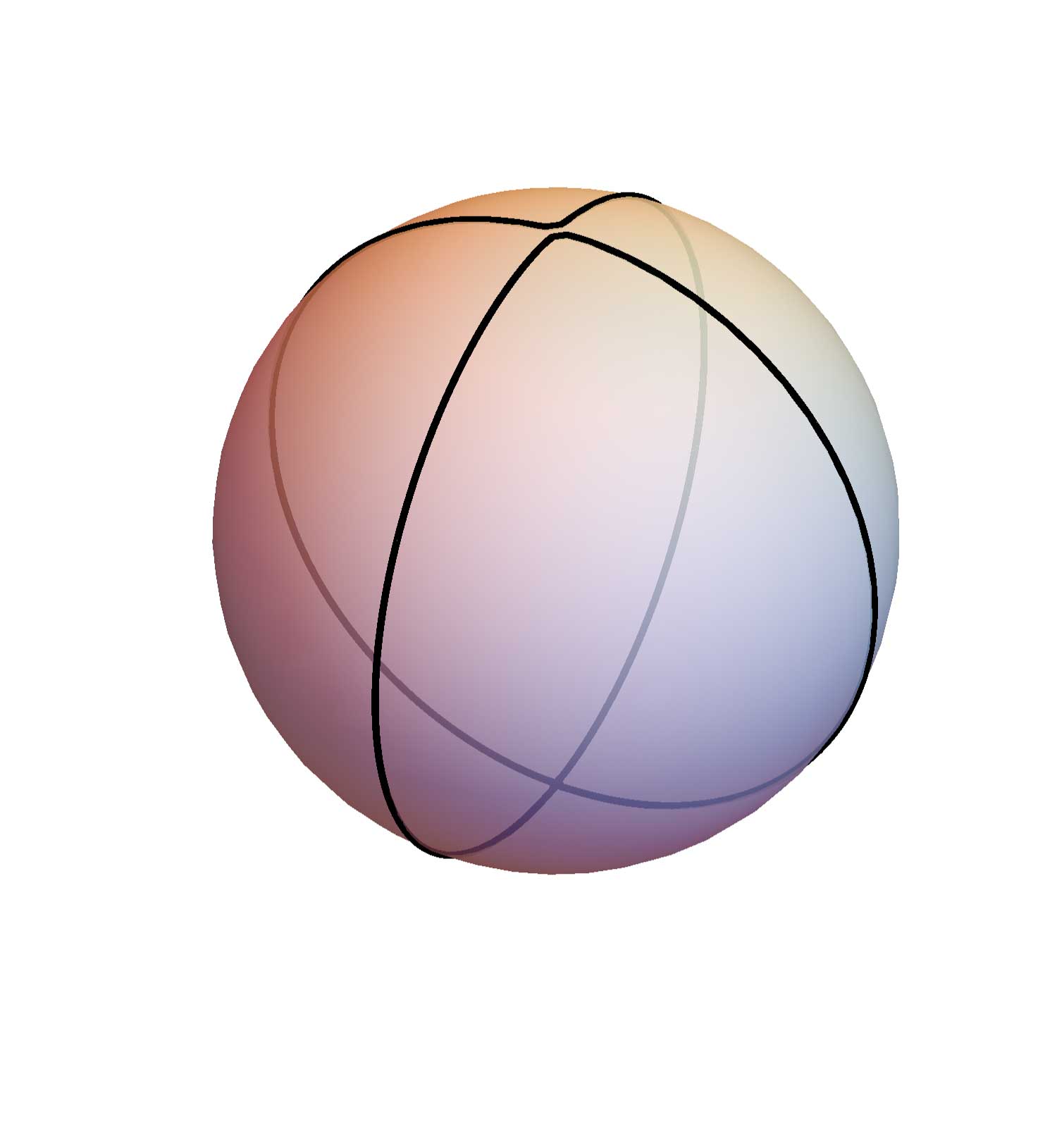}\includegraphics[width=1.5in]{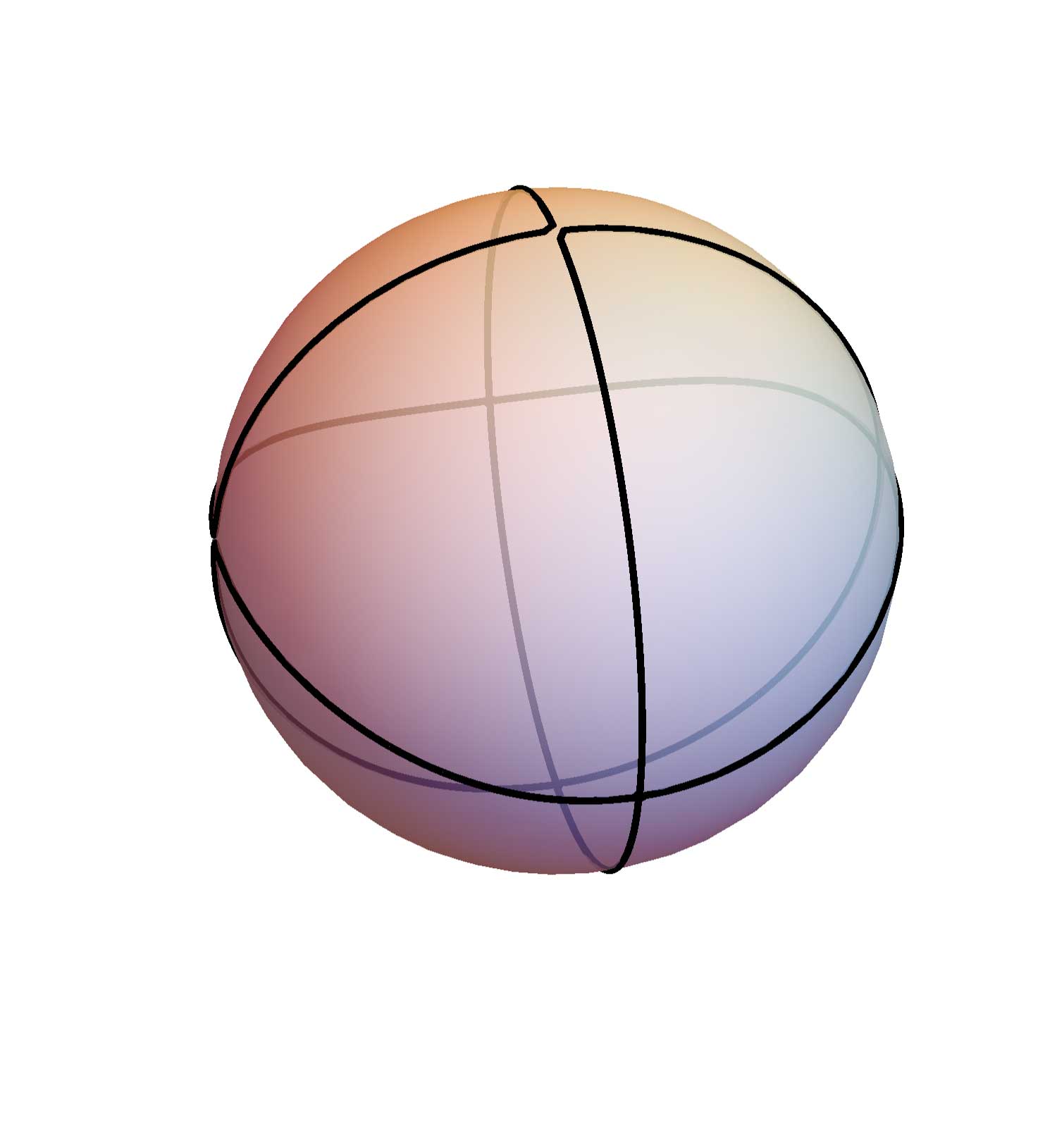}\includegraphics[width=1.5in]{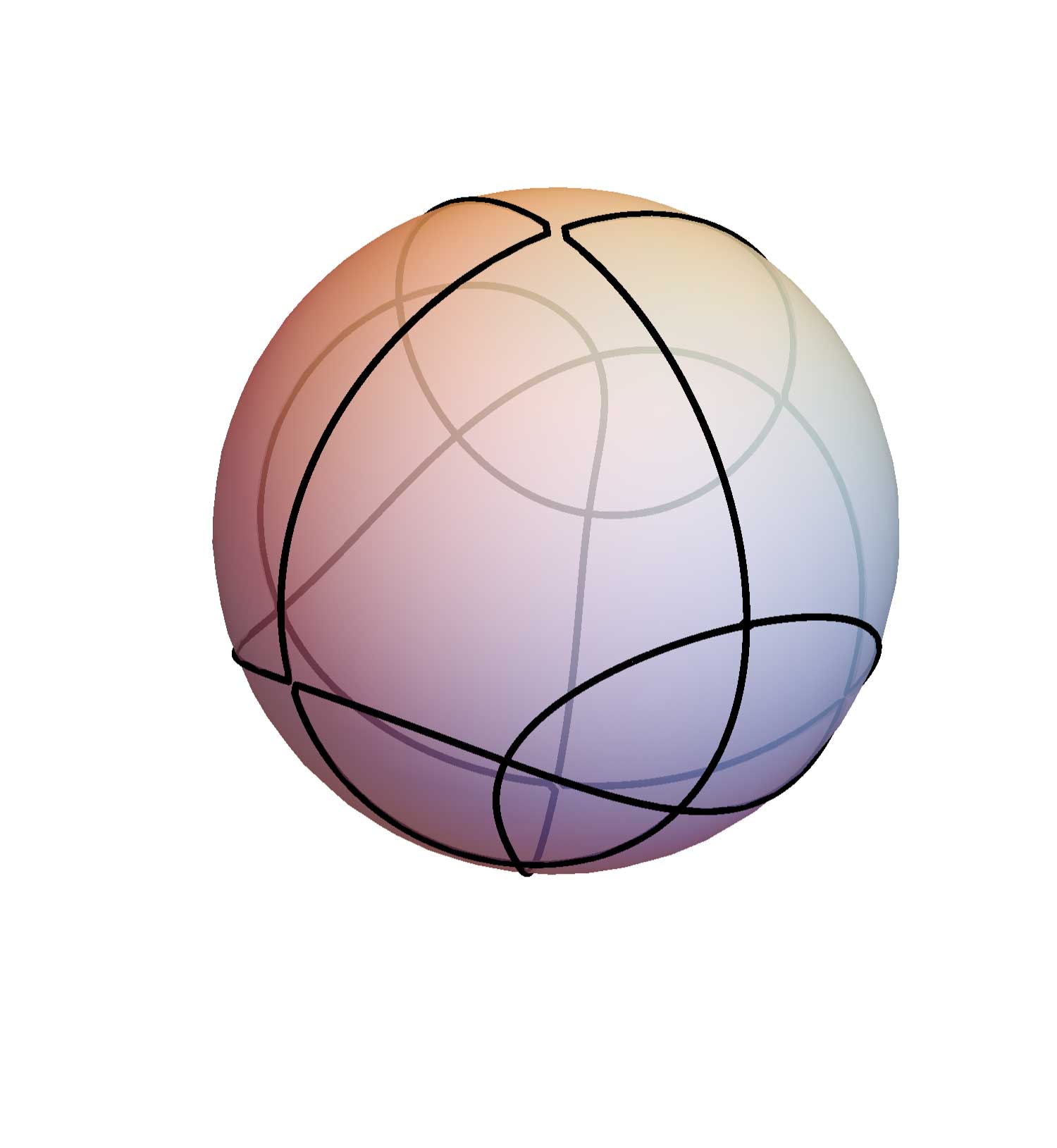}\includegraphics[width=1.5in]{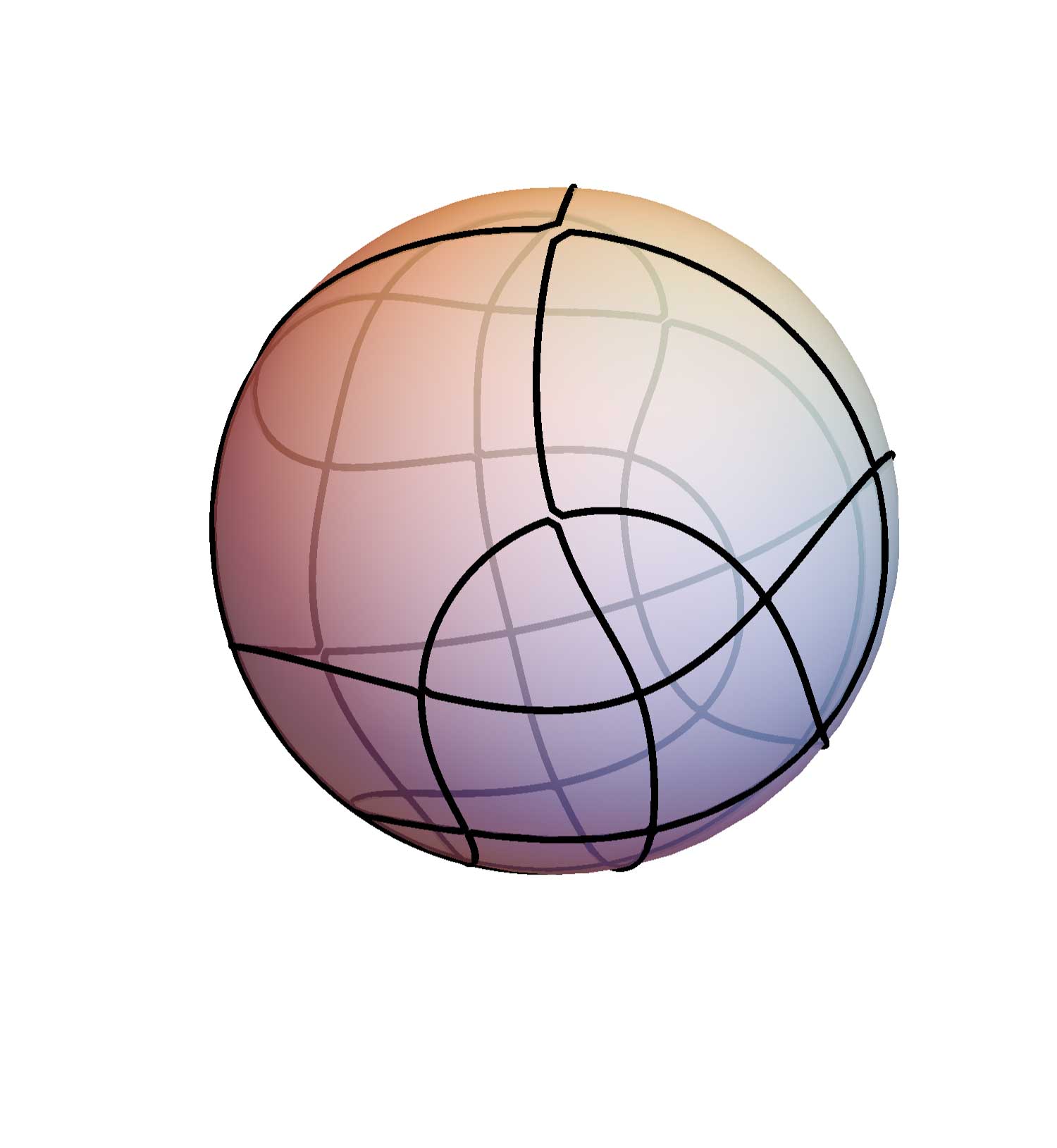}

\includegraphics[width=1.5in]{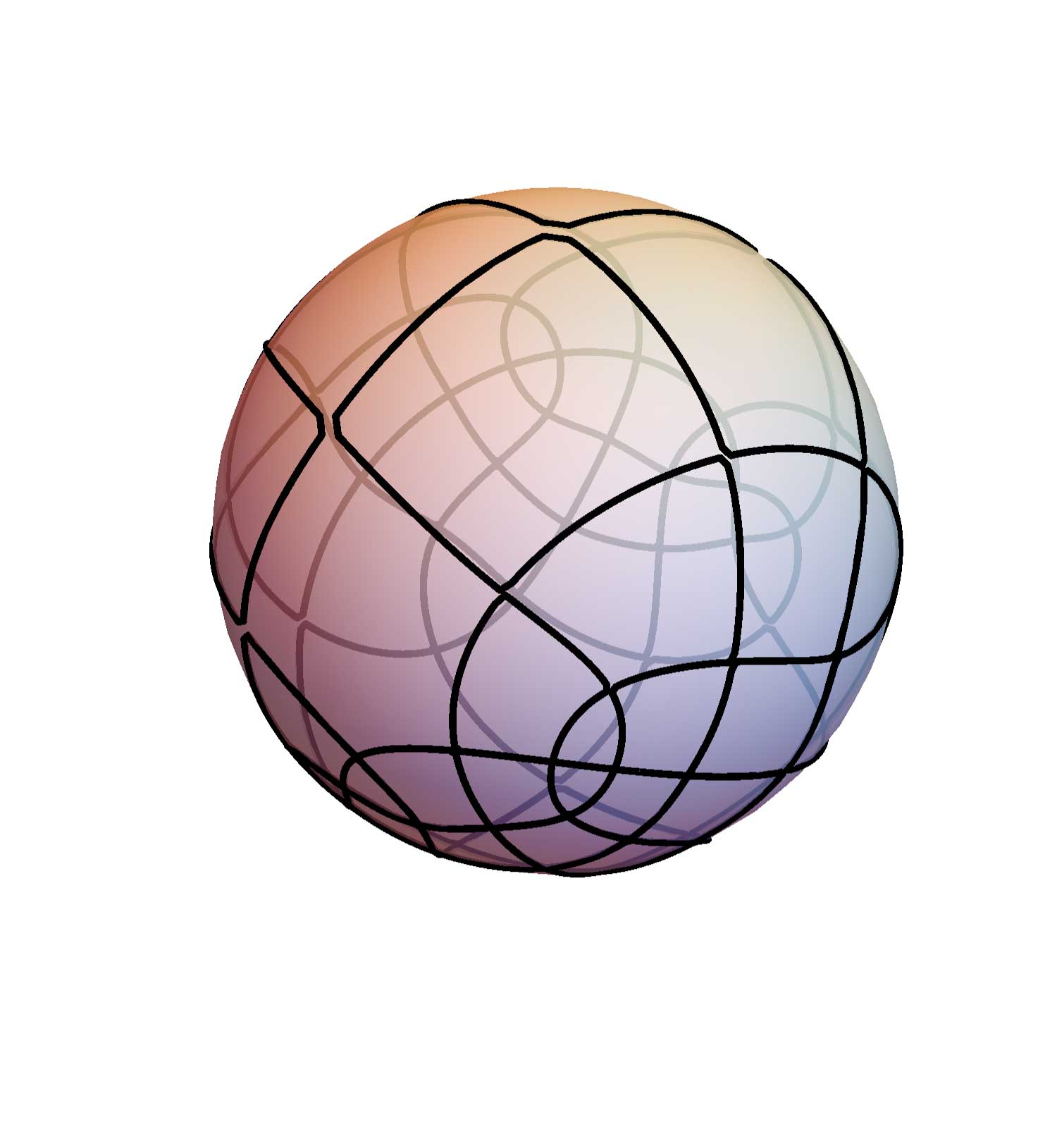}\includegraphics[width=1.5in]{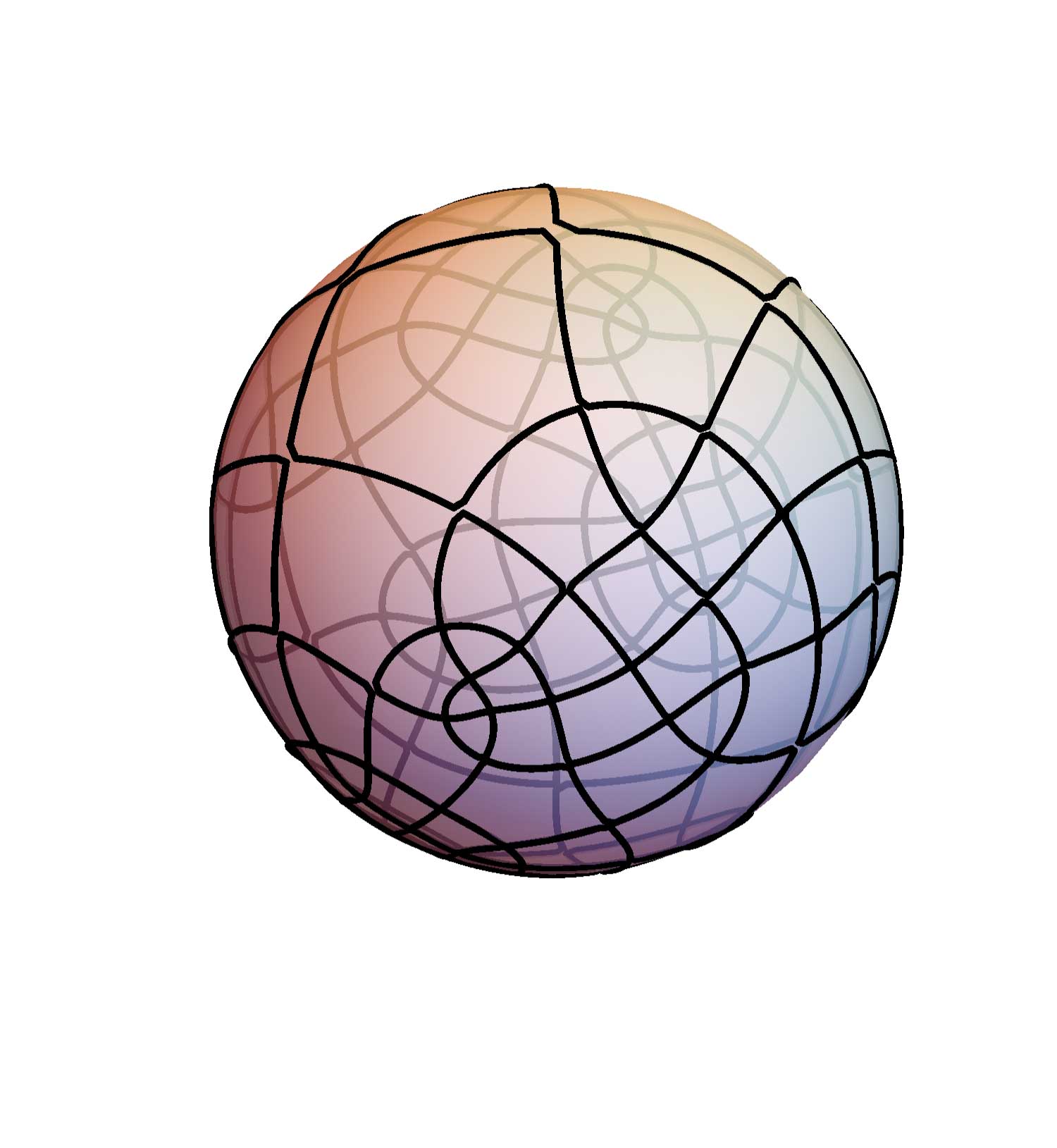}\includegraphics[width=1.5in]{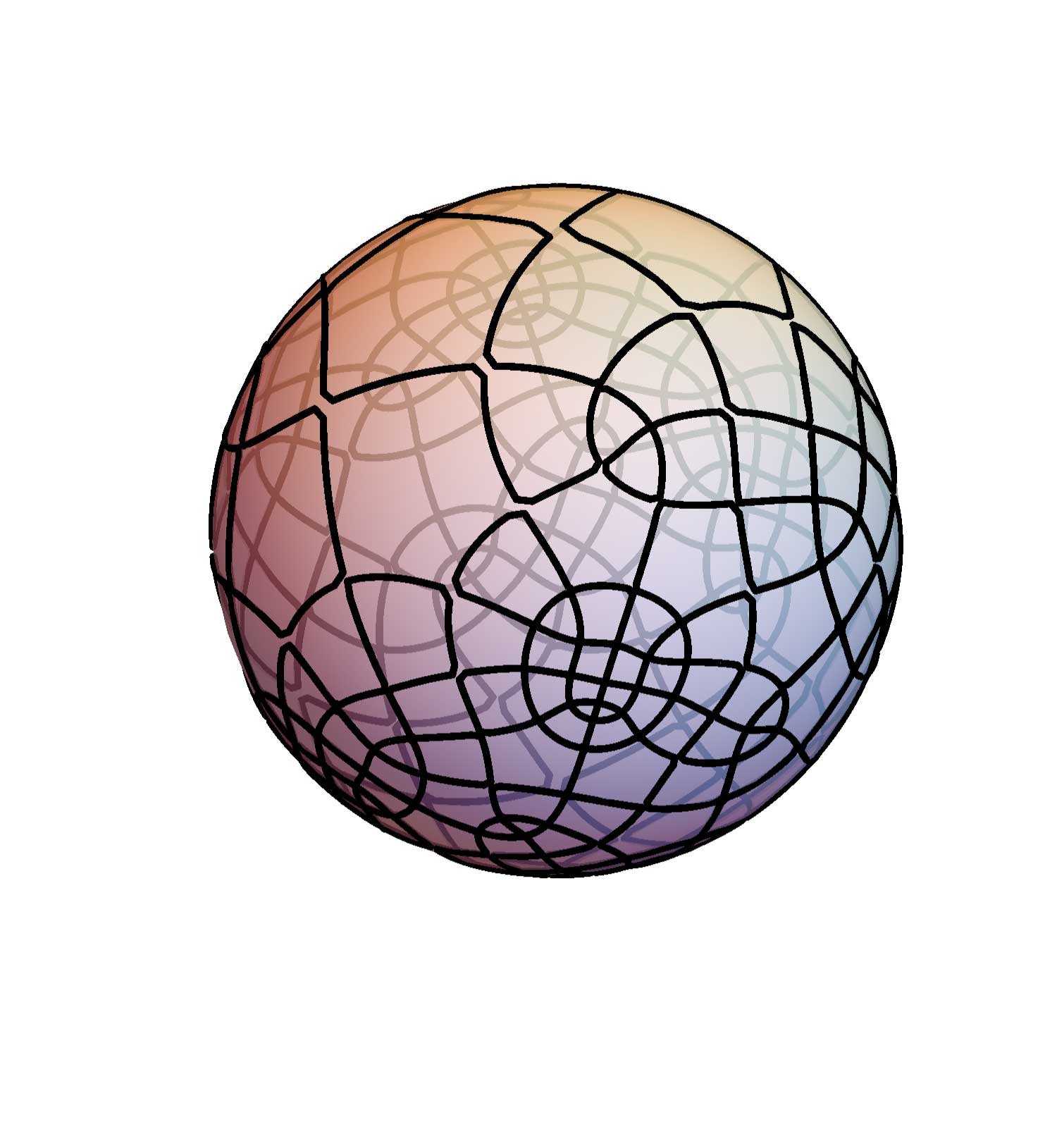}\includegraphics[width=1.5in]{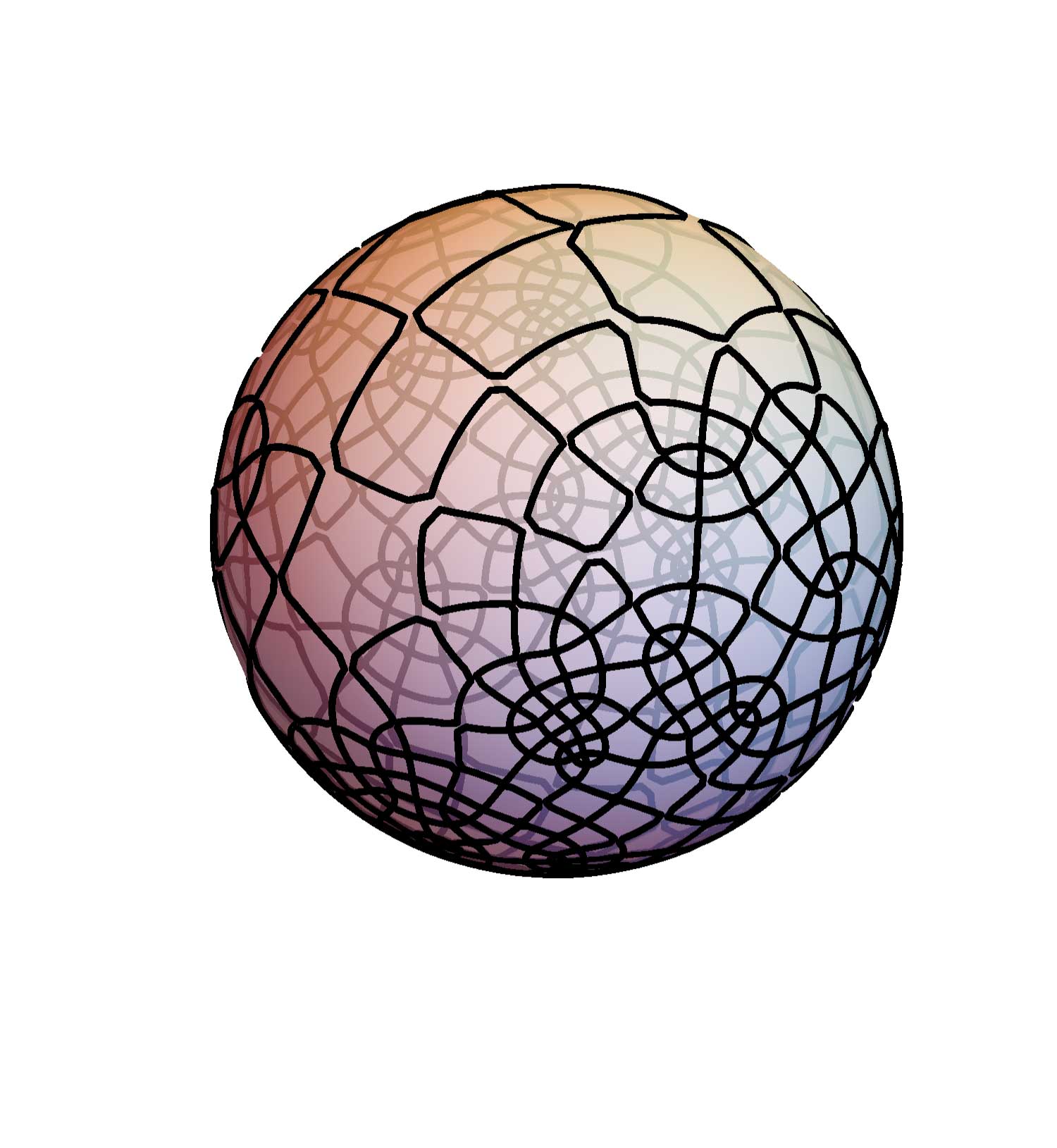}

\includegraphics[width=1.5in]{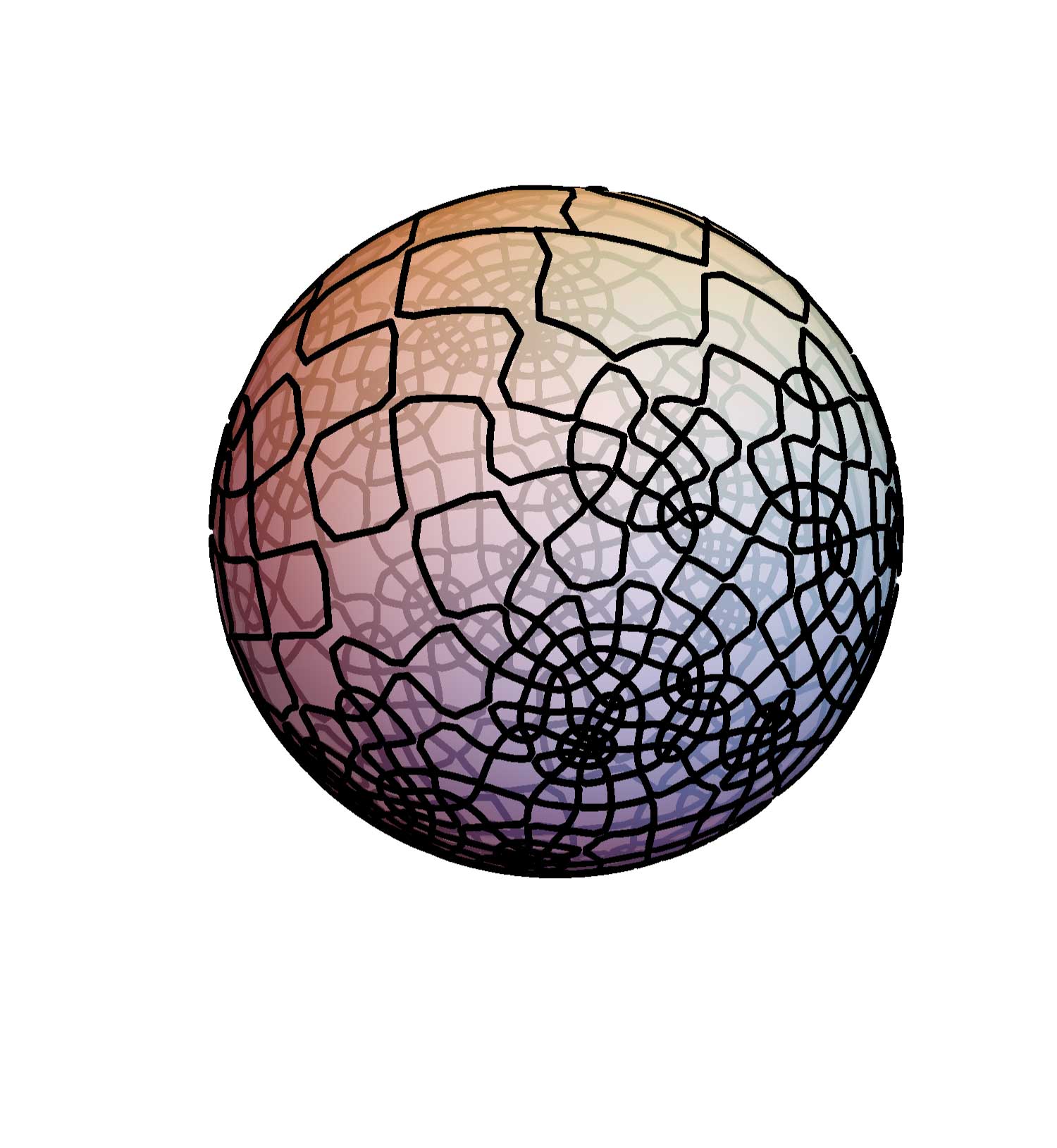}\includegraphics[width=1.5in]{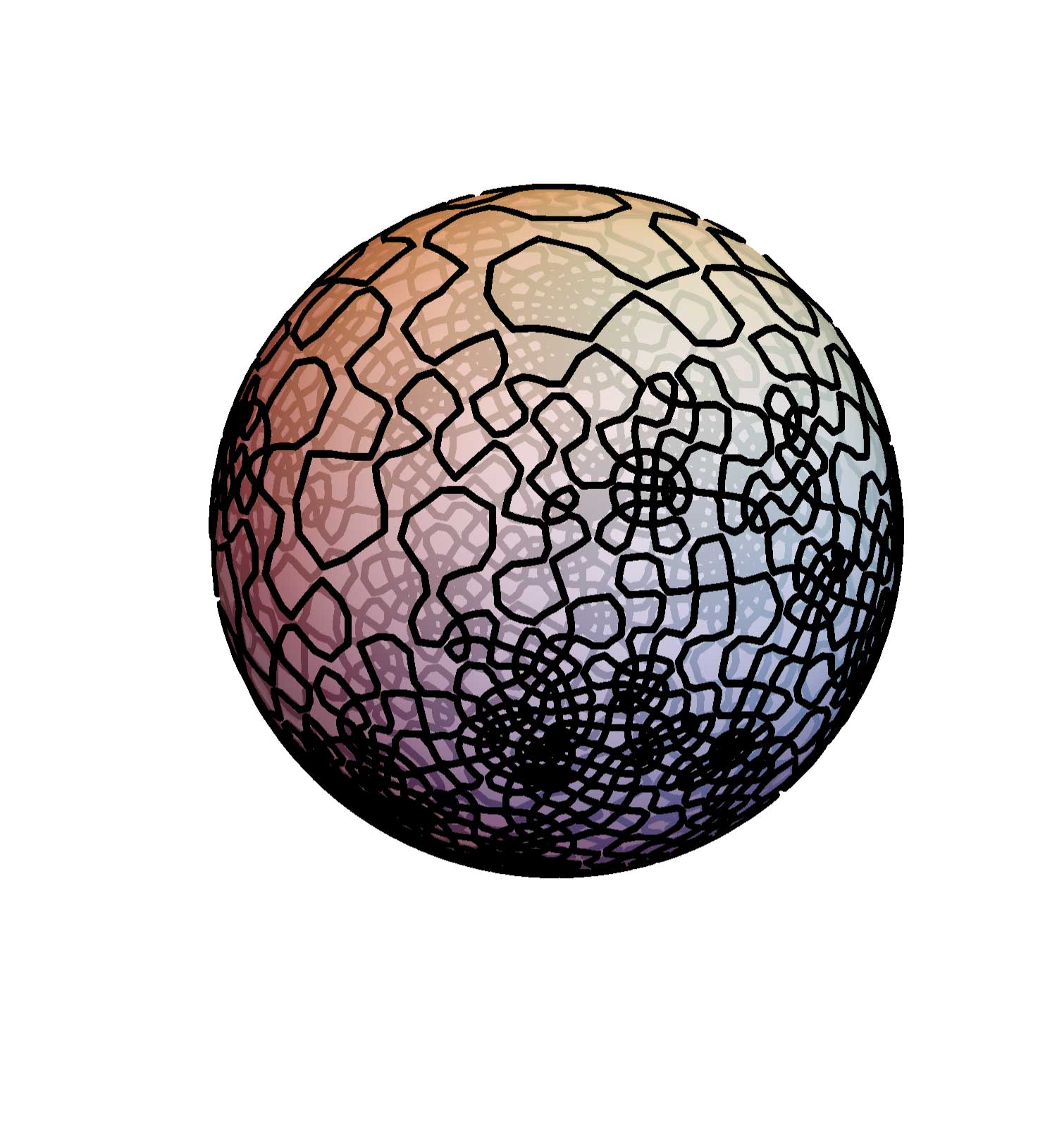}\includegraphics[width=1.5in]{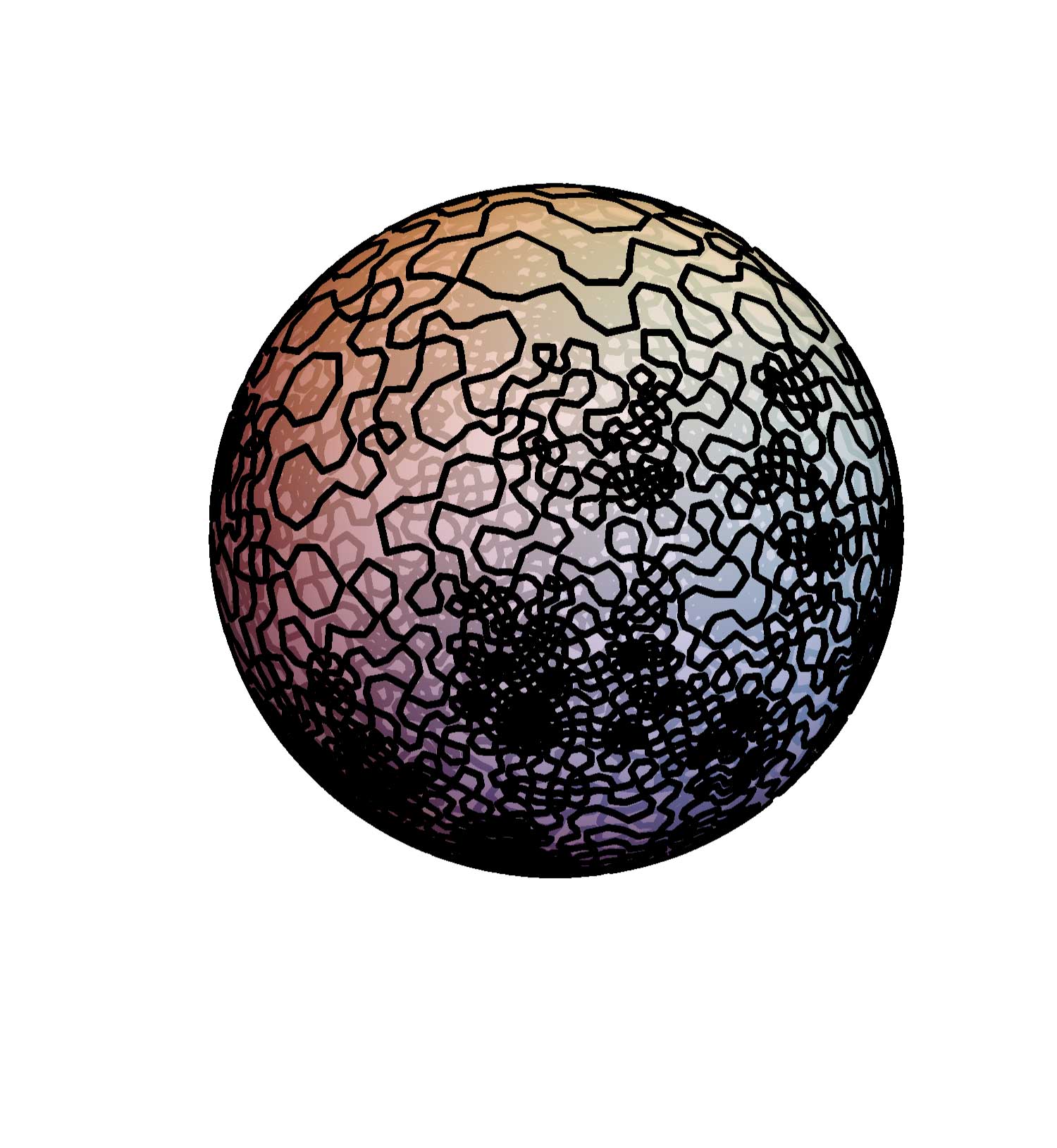}\includegraphics[width=1.5in]{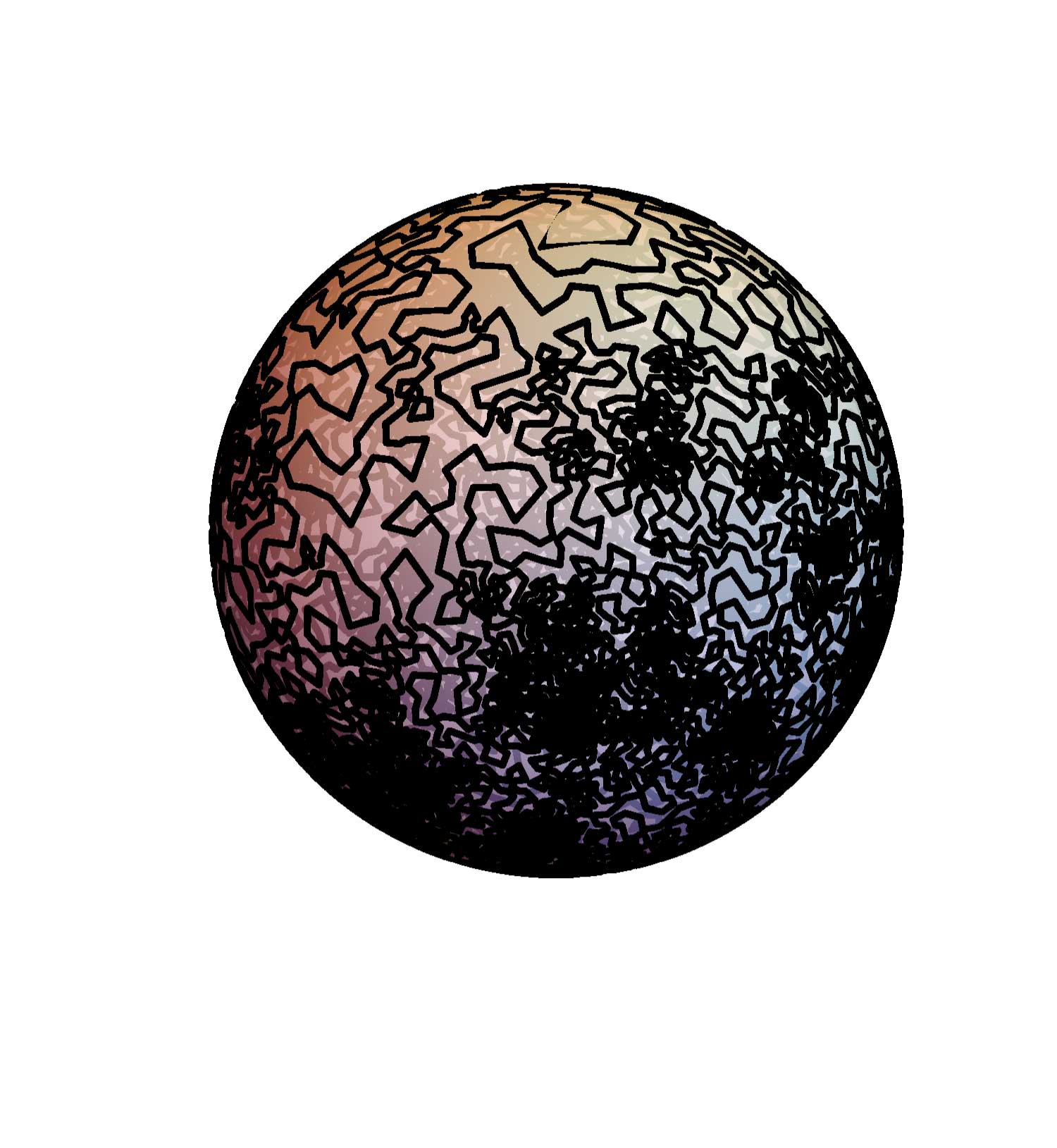}

\caption{Pullbacks of the equator by a sequence of rational maps which approximate the geometric mating of $f_{1/4}$ and $f_{1/8}$.}
\end{figure}

We may note in Figure \ref{aiterates}, similar to the comments at the end of Example \ref{example1}, that `triangular' tile pairs tend to mark the locations of critical values of the map, where in later iterations they visually appear to be a source of prominent dark spots and curve bunching. The settling of darkened spots into relatively stable locations upon later iterations is reflective of the convergence of parameters $u_n$ and $v_n$; thus the convergence of the rational map approximations $F_n$ to our desired geometric mating.

\end{example}

\subsection{Fine tuning and practical concerns}\label{implementation}

In general, we wish to start with a curve containing the postcritical set that separates $0$ and $\infty$. Since we are working with an iterative algorithm, this curve is typically approximated by an ordered list of points in the complex plane--a discretized parameterization of the curve, in a way. The ordering of the points suggests the direction in which we connect the dots to form a Jordan curve, and this direction in which we traverse the curve specifies an orientation. 

Taking a pullback of the curve for our purposes is then somewhat difficult: since the map is 2-1 and the original curve contains the two critical values of the map, the pullback will not be simple. Yet, in order to select a new set of marked points for the next iteration, we are required to have an understanding of the orientation of the pullback. Finding the points in the pullback of the curve is trivial, but how do we ascertain this orientation and maintain a useful discretized parameterization after each pullback?

We must consider the following:

\textbf{Handedness at the critical point:} The pullback is not Jordan, and intersects itself in at least two locations (the critical points). This means that near these points of intersection, we have a choice as to which fork we take to continue along the curve. 

The initial curve that we are using here is actually an approximation to the pseudo equator for the associated rational map. If we think of the pseudo equator as separating our sphere into a black tile and a red tile, the pullback preserves the orientation of the pullback curve with respect to the colors of the pullback tiles. That is, if the curve is positively oriented to black tiles, the pullback curve will be positively oriented to the pullback of the black tiles. Further, the curve initially separates the two Julia sets of polynomials in the mating. We should have a path on the curve to traverse that preserves not only the appropriate orientation, but also 'separates" the Julia sets (as best as possible, since the pullback intersects the critical points).

This suggests that whenever a critical point is being approached as we traverse the pullback, that we are intended to fork a particular direction in order to maintain orientation and return the expected finite subdivision rule. If we stereographically project our complex points, we can check this in a rudimentary way using cross products and the right hand rule.

\textbf{Branches of the square root:}  Using the normalization $\displaystyle F_n(z)= \frac{(u_n-1)v_nz^2-u_n(v_n-1)}{(u_n-1)z^2-(v_n-1)}$, we may think of the pullback of $F_n$ as the composition of a Mobius transformation and the square root function. Since Mobius transformations are orientation preserving on the Riemann sphere, we could apply this map to our discrete curve parameterization and have a direct correspondence with another discrete parameterization of the curve. The square root, however, causes potential problems. We typically think of the pullback by square root as resulting in two pieces: a `positive' version given by the principal branch of the square root function, and a `negative' version given by multiplying the former by negative one. The problem is that depending on our curve, this canonical branch of the square root function may cut our pullback into more than two pieces, jumbling the order in which we wish to traverse the points. A sample problematic curve is depicted in Figure \ref{sqrt}. We thus must pay extra attention to pairs of consecutive points on the curve that lie on opposite sides of the negative real axis. In general, two consecutive points listed on the pullback curve should be near each other, so the appropriate branch of the square root to continue on should be selected accordingly. (In essence, the effect is similar to picking a `smart' branch cut of the square root function which only intersects our curve in one spot, and \emph{then} we can worry about adjoining the positive and negative pieces of our curve for the pullback.)

\begin{figure}\label{sqrt}
\center{\includegraphics[height=3in]{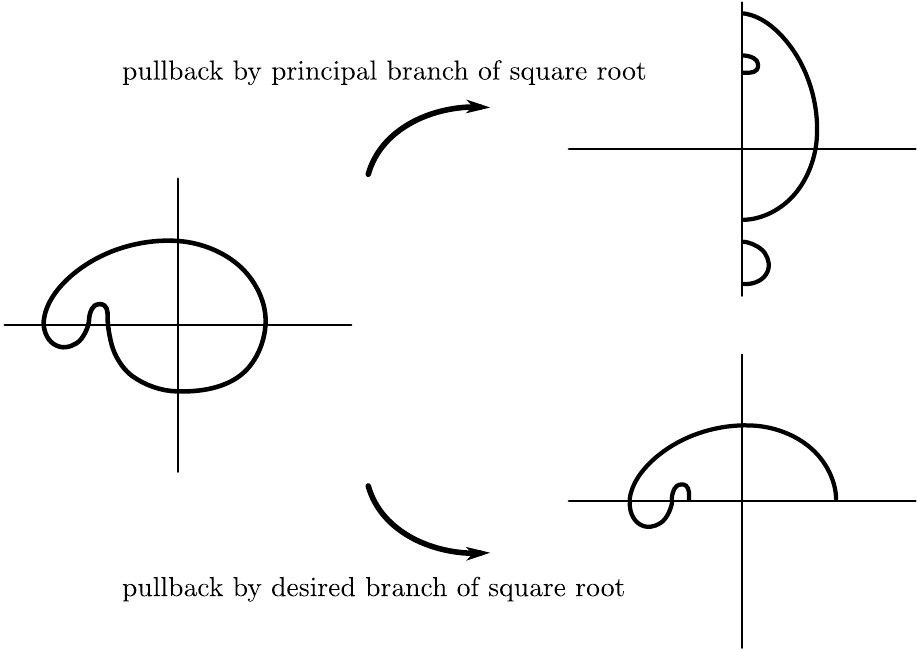}}
\caption{The problem with using the canonical branch of the square root for pullbacks of $C_n$: orientation is important, but harder to keep record of when our pullback curve is cut into several pieces.}
\end{figure}

\textbf{Pruning:} Naively keeping the pullback points obtained in each iteration as a record of $C_n$ will double the amount of data recorded on each iteration. Memory and processing constraints will thus neccessitate simplifying $C_n$ on each iteration before passing through to the next. Since ordering of the postcritical points on $C_n$ is really the most crucial piece of information to record, we can work with pseudo isotopies of $C_n$ instead. One way to do this is to remove points on the curve that do not alter the homotopy type of $C_n$ relative to $P_g$. In \cite{MEDUSA}, a process called ``circlifying" is suggested. In general, the specific method of simplifying $C_n$ while preserving approximate homotopy type is not so important as just implementing \emph{some} way to reduce the data requirements of $C_n$ before moving on to the next step.

\section{Open questions and remarks}
\label{connections}

There are a few clear avenues along which to pursue further investigation, and on which we should make further remarks. A few are highlighted below.

\subsection{Hybrid models and extensions} The focus of this paper has been on quadratic matings in which our initial polynomial pair is not strongly mateable. This is fairly restrictive. To develop a clearer picture of matings, there are two ways that we could attempt a generalization of the algorithm:  working in general quadratic matings, and/or working with matings in higher degrees.

To consider the general quadratic case, it may be helpful to note some key similarities and differences between the Medusa and pseudo-equator models. The pseudo-equator curve is, in essence, a deformation retract of the Medusa. Both models then contain Jordan curve structures that are expected to deform into the Julia set of the mating upon iteration. The curve that does this in the pseudo-equator model contains postcritical points while the analogous curve in the Medusa model does not. In a way, this dichotomy in structure makes sense: we might expect that postcritical points lie on one of these Jordan curve structures if they appear in the Julia set of the mating, and off of them when they appear in the Fatou set. As the Medusa algorithm has difficulty with some cases where the Julia set of the mating is $\mathbb{S}^2$, and the pseudo-equator algorithm is not designed for cases where postcritical points lie in the Fatou set of the mating, this suggests the need for a hybridized Medusa-pseudo-equator model.

To consider the higher degree cases, we could follow this by generalizing the technique given in this paper: obtain a pseudo-equator or Medusa-like structure, find an appropriate finite subdivision rule, apply the subdivision rule to provide mapping instructions for pullbacks, and use an appropriate rational function normalization to apply Thurston's algorithm. This will require some effort in crafting appropriate normalizations and obtaining mapping instructions for pullbacks, since higher degree matings will typically be more complicated maps.

\begin{figure}[hbt]
\center{\includegraphics{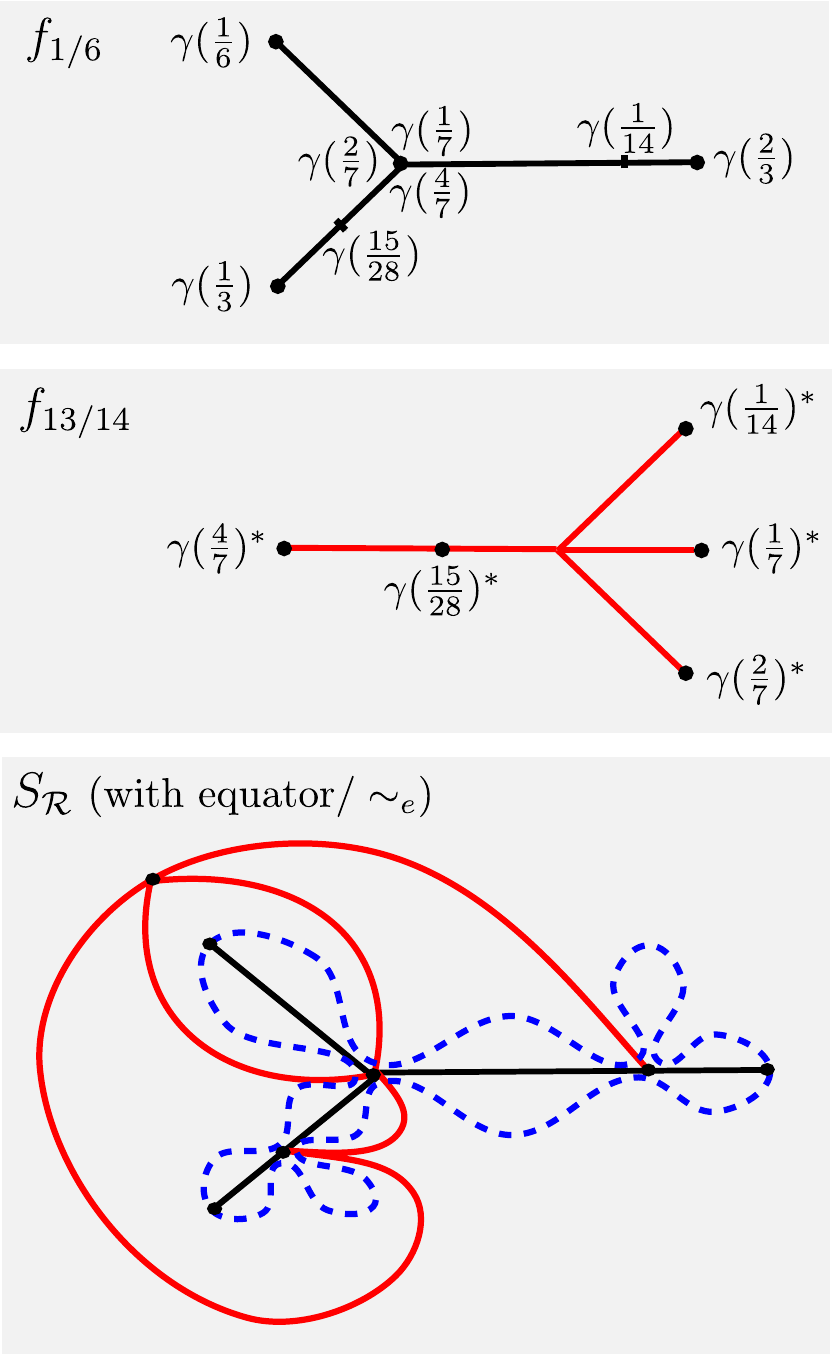}}
\caption{\label{meyerex3}{The ``pseudo-equator" is pinched by $\sim_e$ into a non-Jordan curve.}}
\end{figure}

\subsection{Unmating of rational maps} In \cite{UNMATING}, Meyer comments that the existence of a pseudo equator is sufficient but not necessary to be indicative of a mating. The methods of this paper work in the quadratic cases that a pseudo equator can in some manner be found. Not all non-hyperbolic matings have pseudo-equators, however. A potential reason is that the path $C$ is not always a Jordan curve--any time $\sim_e$ contains equivalence classes that include multiple postcritical or critical points from one of the polynomials in the mating, the equator $\Gamma$ is pinched to form $C$. This falls outside of the scope of the definition for a pseudo equator, which concerns the deformation of a Jordan curve.  For instance, the example given in \cite{UNMATING} for $f_{1/6}\upmodels f_{13/14}$ presents with subdivision complex $S_\mathcal{R}$ and $C$ as shown in Figure \ref{meyerex3}. Notice the pinching of the blue equator curve due to the postcritical identifications on $f_{13/14}$.

We can still use finite subdivision rules to determine the pullbacks of these `pinched' pseudo-equators, very similar to how we have treated pseudo-equators in the rest of this paper. The Caratheodory semiconjugacy is still applicable after this pinching, and will reflect the action of the essential mating of the desired polynomials on $\mathbb{S}^2$. There are still pseudo-isotopies between such pinched curves and their pullbacks, so the general idea of the pseudo-equator algorithm should work even in the case that we examine a non-hyperbolic mating with no pseudo-equator. The computer implementation for such a case would be more difficult, but not impossible.

While pseudo-equators are a sufficient condition for a rational map to be a mating, they are not necessary. This extension of pseudo-equators to pinched pseudo-equators may extend the number of rational maps that could be decomposed as matings though. This suggests the following question: Would a pinched pseudo-equator serve as a sufficient \emph{and} necessary condition to the unmating of a map? If so, and if we tackled the computational details involved in implementation, we could possibly obtain approximations of rational functions for all matings.


\section*{Acknowledgements}

Many of the Julia set graphics throughout the paper were created with the assistance of the dynamics software Mandel 5.8. \cite{MANDEL} The author would also like to extend sincere thanks to Suzanne Boyd for her insights on the workings of the Medusa Algorithm.

\bibliographystyle{plain}
\bibliography{master}

\end{document}